\documentclass[a4paper, 11pt, twoside, reqno]{amsart} \usepackage[a4paper,inner=2cm,outer=2cm,top=2.5cm,bottom=2.5cm]{geometry}
\usepackage{tikz}
\usepackage{amsmath,amscd}
\usepackage{amssymb}
 \def\diam{ {{\rm diam}}}

\usepackage{ulem}
\usepackage{amsthm}
\usepackage{comment}
\usepackage{mathrsfs}
\usepackage{graphicx, xcolor}
\usepackage{xcolor}
\usepackage{mathtools}
\usepackage[dvipsnames]{xcolor}
\definecolor{myred}{RGB}{220,20,60}
\definecolor{chatakred}{RGB}{255,0,0}
\usepackage[ocgcolorlinks, linkcolor=blue,citecolor=red,urlcolor=blue]{hyperref}
\usepackage{bm}
\usepackage{bbm}
\usepackage{url}
\usepackage[utf8]{inputenc}
\usepackage{mathtools,amssymb}
    \usepackage{amssymb}
\usepackage{tikz}
\usepackage{dsfont}
\usepackage{relsize}
\usepackage{url}
\usepackage{xcolor}
\usepackage{graphicx}
\usepackage{mathrsfs}
\usepackage[shortlabels]{enumitem}
\usepackage{lineno}
\usepackage{amsmath}
\usepackage{enumitem}
\usepackage{amsthm} 
\usepackage{verbatim}
\usepackage{dsfont}
\usepackage[utf8]{inputenc}
\usepackage{tikz}
\DeclareMathOperator{\supp}{supp}
\newcommand{\R}{{\mathbb R}}
\numberwithin{equation}{section}

\allowdisplaybreaks
 
 \mathtoolsset{showonlyrefs}
\theoremstyle{plain}
\newtheorem{theorem}{Theorem}[section]
\newtheorem{lemma}[theorem]{Lemma}
\newtheorem{corollary}[theorem]{Corollary}

\allowdisplaybreaks[0]
\usepackage{blindtext}
\usepackage{cleveref}
\renewcommand{\O}{\Omega}
\renewcommand{\o}{\omega}

\title[An inverse problem for a semilinear damped wave operator]{A partial data coefficient identification inverse problem for a semilinear damped wave operator}

\author[M. Kumar]{Mandeep Kumar}
\author[P. Kumar]{Parveen Kumar}
\author[M. Vashisth]{Manmohan Vashisth}
\address{{Department of Mathematics, Indian Institute of Technology Ropar, Rupnagar, Punjab-140001, INDIA.}}

\email{mandeep.sansanwal@gmail.com}
\email{parveen.24maz0013@iitrpr.ac.in}
\email{manmohanvashisth@iitrpr.ac.in}

\begin{document}

\begin{abstract}
This manuscript deals with a coefficient identification inverse problem for a semilinear damped wave operator in a bounded domain of $\mathbb{R}^{1+d}\ (d\geq 2)$. We establish the unique recovery of the damping coefficient, zeroth-order linear term, and the coefficient of the power-type nonlinearity from the partial Dirichlet-to-Neumann map. We investigate the corresponding uniqueness problem under the assumption that the coefficients are known in a neighborhood of the boundary, while the Neumann boundary data are prescribed only on an arbitrarily small open subset of the boundary. The analysis is largely based on the unique continuation principle, Fourier Analysis and the higher-order linearization technique. 

\vspace{.2cm}
\noindent
{\bf Keywords.}  Inverse problem, semilinear damped wave operator, uniqueness, geometric optics solutions, unique continuation principle, higher-order linearization.
		
		\noindent{\bf Mathematics Subject Classification (2020)}: Primary: 35R30; Secondary: 35L05, 44A12.
	\end{abstract}
	\maketitle
\section{Introduction}
\subsection{Mathematical formulation and statement of main result}
In this manuscript, we study the inverse problem of identifying the coefficients of a semilinear damped wave operator with a power-type nonlinearity. Let $\Omega \subset \mathbb{R}^d$ $(d \geq 2)$ be an open, bounded, and connected domain with smooth boundary $\partial \Omega$. Furthermore, let $\Box$ depicts the standard wave operator $\partial_{t}^2 - \Delta_x$ in $\mathbb{R}^{1+d}$. For an integer $\ell \geq 2$, we then consider the following initial boundary value problem (IBVP) for a semilinear damped wave operator:
\begin{align}\label{equation; IBVP}
    \begin{cases}
      \Box u(t,x) + a(x)\partial_t u(t,x) + q(x) u(t,x) +r(x) u^{\ell}(t,x) = 0,  & (t,x) \in Q:=(0,T)\times \Omega,\\
      u(t,x)  = f(t,x), & (t,x) \in\Sigma:=(0,T)\times \partial \Omega,\\
      u(0, x )= \partial_t u(0, x ) = 0,  & x \in\Omega,
    \end{cases}
\end{align}
where $T>0$ is fixed, \(\Box\) denotes the d’Alembertian operator on \((0,T)\times\Omega\), \(a(x)\) represents the (spatially dependent) damping coefficient, \(q(x)\) is a potential term, and \(r(x)u^{\ell}(t,x)\) encodes the nonlinear contribution of order \(\ell\).  To state the well-posedness result, we first define some notations and function spaces. Since $\Omega$ has a smooth boundary, following  \cite{evans2022partial}, for any $m \in \mathbb{N}$, we denote the completion of $(C_c^{\infty}(\Omega), \lVert \cdot \rVert_{H^m(\Omega)})$ by $   H^{m}_{0}(\Omega)$ and is given by
\begin{align*}
     H^{m}_{0}(\Omega)
  := \left\{ h\in H^{m}(\Omega):\;
            \partial^{\alpha} h=0 \ \text{on}\ \partial\Omega
            \ \text{ for } |\alpha|\le m-1 \right\}. 
\end{align*}
We now proceed to introduce a class of Sobolev spaces that explicitly depend on time. For integers $s_1, s_2 \geq 0$ and for $\O^\sharp$ equal either to $\O$ or to $\partial\O$, we define the anisotropic Sobolev space
\[
H^{s_1, s_2}((0,T)\times \O^\sharp) := H^{s_1}\bigl(0,T; L^2(\O^\sharp)\bigr)\cap L^2\bigl(0,T; H^{s_2}(\O^\sharp)\bigr),
\]
endowed with the norm
\[
\lVert u \rVert_{H^{s_1, s_2}((0,T)\times \O^\sharp)} 
:= \lVert u \rVert_{H^{s_1}(0,T; L^2(\O^\sharp))} 
 + \lVert u \rVert_{L^2(0,T; H^{s_2}(\O^\sharp))}.
\]
 From (Theorem 2.3 in \cite{lions1972nonhomogeneous1}), one has the continuous embedding
\[
H^{m}((0,T)\times\O)\hookrightarrow H^{m}\!\bigl(0,T;L^{2}(\O)\bigr)\cap L^{2}\!\bigl(0,T;H^{m}(\O)\bigr).
\]
Consequently, the spaces \(H^{m,m}(Q)\) and \(H^{m,m}(\Sigma)\) can be identified with \(H^{m}(Q)\) and \(H^{m}(\Sigma)\), respectively, and the norms induced by these identifications are equivalent. Also, for $p,s\geq 0$, we define
\begin{align*}
     H^{p}_{0}(Q)
  &:= \left\{ h\in H^{p}(Q)\;:\;
            \partial_t^k h(0,\cdot) \in H_{0}^{p-k}(\Omega)
            \ \text{for } k=0,\dots,p-1 \right\},\\
            H^{p}_{0}\left(0,T;H^{s}(\partial\Omega)\right)
  &:= \{ h\in H^{p}(0,T;H^{s}(\partial\Omega)): \partial_t^{k} h(0,\cdot)=0 \text{ in } H^{s}(\partial\Omega)
      \ \text{for } k=0,\dots,p-1\}.
\end{align*}
Next, for integers $p \geq s \geq 0$, we consider the energy space $\mathscr{E}_p$ given by
\begin{align*}
    \mathscr{E}_p 
    := \bigcap_{s=0}^p C^s\bigl([0,T];H^{p-s}(\Omega)\bigr),
\end{align*}
endowed with the norm
\begin{align}\label{norm-Em}
    \|u\|_{\mathscr{E}_p} 
    := \left(\sup_{t\in[0,T]} 
       \sum_{s=0}^{p} 
       \bigl\| \partial_t^{\,s} u(t) \bigr\|_{H^{p-s}(\Omega)}^{2}\right)^{1/2},
\end{align}
where, for each $0 \le s \le p$,
\begin{align*}
     C^{s}\bigl([0,T];H^{p-s}(\Omega)\bigr)
     := \Bigl\{\,u : [0,T] \to H^{p-s}(\Omega)\;:\;
           \partial_t^j u \in C\bigl([0,T];H^{p-s}(\Omega)\bigr)
           \ \text{for all } j = 0,\dots,s \Bigr\}.
\end{align*}
In particular, $\mathscr{E}_p$ consists of all functions whose time derivatives up to order $p$ exist and are continuous with values in the corresponding spatial Sobolev spaces, with the energy norm \eqref{norm-Em} measuring the supremum in time of the sum of the squared Sobolev norms of these derivatives.
For $m>d+1$, we consider the Dirichlet data $f\in \mathcal{F}^\varrho_{m+1}$, where 
\begin{align}\label{E-delta}
\begin{aligned}
    \mathcal{F}^\varrho_{m+1} := \bigg \{ f\in H^{m+1}(\Sigma)\;:\; f\in H_0^{m+1-k}(0,T; H^k(\partial\Omega)) \ \text{ such that } \|f\|_{H^{m+1}(\Sigma)}< \varrho,\\ \mbox{ for } \, k= 0,1,\dots, m  \bigg\}
    \end{aligned}
\end{align}
for $\varrho>0$ sufficient small.
Also, for some $R>0$, we define 
\begin{align*}
    \mathscr{B}_R(0) := \left\{ u \in \mathscr{E}_{m+1} \;:\; \|u\|_{\mathscr{E}_{m+1}} <R\right\}.
\end{align*}
Then, for the forward problem \eqref{equation; IBVP}, we have the following well-posedness result:
\begin{theorem}{\cite[Theorem~2.3]{bhardwaj2026reconstructionpotentialdampingcoefficients}}\label{existence: forward problem}
    Let $m>d+1$, and $T>0$. Assume that $a, q,r\in C_c^{\infty}(\Omega)$  and  $f \in \mathcal{F}^\varrho_{m+1}$, with $\partial_t^k f(0) \in H_{0}^{m-k}(\Omega)$ for $k=0,1,\dots,m-1$, then the IBVP \eqref{equation; IBVP} has a unique solution $u\in \mathscr{B}_R(0) $ satisfying 
\begin{align}
  \|u\|_{\mathscr{E}_{m+1}} + \|\partial_\nu u\|_{H^{m}(\Sigma)}
\le C\,e^{CT} \|f\|_{H^{m+1}(\Sigma)}, 
\end{align}
where $C>0$, depending only on $\Omega, a,q$ and $r$. Here, $\nu$ stands for the unit normal pointing outward to $\partial \Omega$.
\end{theorem}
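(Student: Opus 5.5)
We will prove well-posedness of the semilinear problem \eqref{equation; IBVP} by a contraction mapping argument built on the linear theory for the damped wave operator with higher regularity. The linear building block is as follows. For $F\in H^{m}(Q)$ with $\partial_t^kF(0)\in H_0^{m-1-k}(\Omega)$ and $f\in\mathcal{F}^\varrho_{m+1}$ satisfying the stated compatibility conditions, the linear IBVP
\[
\Box v+a\partial_t v+qv=F \ \text{in } Q,\qquad v|_\Sigma=f,\qquad v(0)=\partial_tv(0)=0
\]
has a unique solution $v\in\mathscr{E}_{m+1}$. It satisfies
\begin{align}\label{lin-est}
\|v\|_{\mathscr{E}_{m+1}}+\|\partial_\nu v\|_{H^m(\Sigma)}\le Ce^{CT}\bigl(\|F\|_{H^{m}(Q)}+\|f\|_{H^{m+1}(\Sigma)}\bigr).
\end{align}

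We prove \eqref{lin-est} in three steps.
\begin{enumerate}
\item Lift the boundary datum by some $E f\in H^{m+2}$-type extension, so that the problem reduces to homogeneous Dirichlet data. The vanishing traces $\partial_t^kf(0)=0$ guarantee that the lifted problem keeps zero initial data.
\item Derive the standard energy identity. Multiply by $\partial_t v$ and integrate; since $a,q$ are bounded, the damping term $a|\partial_tv|^2$ and the potential term are controlled by Gronwall, which gives the factor $e^{CT}$.
\item Obtain higher regularity by differentiating the equation in $t$ up to order $m$. Here the compatibility conditions on $f$ and $F$ ensure that $\partial_t^kv$ solves a problem of the same type with zero initial data. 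Spatial regularity then follows from elliptic regularity applied to $-\Delta v=F-\partial_t^2v-a\partial_tv-qv$ at each fixed time. The coefficients are $C_c^\infty$, so commutators produce only lower order terms.
\end{enumerate}
The Neumann trace estimate is the hidden regularity of Lasiecka--Lions--Triggiani, proved by the multiplier $h\cdot\nabla v$ with $h$ a smooth extension of $\nu$, and then iterated on time derivatives.

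For the nonlinear problem we write $u=u_0+w$, where $u_0$ solves the linear problem with datum $f$ and $F=0$. The remainder $w$ has zero Dirichlet data and solves $\Box w+a\partial_tw+qw=-r(u_0+w)^\ell$. Define $\mathcal{T}(w)$ as the solution of the linear problem with $F=-r(u_0+w)^\ell$ and $f=0$. Since $m+1>d/2+1$ and $m>d+1$, the space $H^{m+1}(Q)$, and $\mathscr{E}_{m+1}$, are Banach algebras embedded in $L^\infty$. This yields
\[
\|r(u_0+w)^\ell\|_{H^m(Q)}\le C\|r\|_{C^m}\,\|u_0+w\|_{\mathscr{E}_{m+1}}^\ell,
\]
together with the Lipschitz bound
\[
\|r(u_0+w_1)^\ell-r(u_0+w_2)^\ell\|_{H^m(Q)}\le C\ell (\|u_0\|+R)^{\ell-1}\|w_1-w_2\|_{\mathscr{E}_{m+1}}.
\]
We also check that the compatibility conditions hold for $F=-r(u_0+w)^\ell$. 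Indeed, all time derivatives of $u_0+w$ vanish at $t=0$ up to order $m$, so the time derivatives of $F$ at $0$ vanish as well. Choosing $\varrho$ small relative to $R$ (for instance $R=2Ce^{CT}\varrho$ with $\varrho$ small enough that $C(\ldots)R^{\ell-1}<1/2$) makes $\mathcal{T}$ a contraction of a closed ball of $\mathscr{E}_{m+1}$ with zero initial data into itself. Its fixed point gives a solution in $\mathscr{B}_R(0)$, and the contraction also gives uniqueness within that ball.

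The final estimate follows by combining the pieces. We have $\|w\|\le Ce^{CT}\|u\|^\ell\le \tfrac12\|u\|$ for a small solution, so absorbing this term gives $\|u\|_{\mathscr{E}_{m+1}}\le 2Ce^{CT}\|f\|$. The Neumann bound then comes from \eqref{lin-est} applied to both $u_0$ and $w$.

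The main obstacle is the linear higher-order estimate \eqref{lin-est}, in particular the bookkeeping of compatibility conditions at $t=0$ and the hidden-regularity Neumann trace bound in $H^m(\Sigma)$. To handle it, we will first try to cite the classical results of Lions--Magenes and Lasiecka--Lions--Triggiani for $\Box+q$ and treat $a\partial_t v$ as a perturbation in the source, absorbing it by Gronwall in time.
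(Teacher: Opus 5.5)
The paper gives no proof of this statement; it quotes it from \cite[Theorem~2.3]{bhardwaj2026reconstructionpotentialdampingcoefficients}. Your architecture is the standard route for results of this type: a sharp linear estimate for $\Box+a\partial_t+q$ at the level $\mathscr{E}_{m+1}$ with $\partial_\nu v\in H^m(\Sigma)$, followed by a small-data contraction for $w=u-u_0$ using $\mathscr{E}_{m+1}\hookrightarrow H^{m+1}(Q)$ and the algebra property of $H^m(Q)$. The nonlinear half of your argument is sound. One correction there: compatibility for $F=-r(u_0+w)^\ell$ is automatic, because $r\in C_c^\infty(\Omega)$ makes every $\partial_t^kF(0)$ vanish near $\partial\Omega$. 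You should not rely on $\partial_t^kw(0)=0$ for $k\ge 2$, which fails for a general $w$ in a ball where only $w(0)=\partial_tw(0)=0$ is imposed.

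The real gap is in your self-contained proof of the linear estimate, where Step~1 loses half a derivative. A datum $f\in H^{m+1}(\Sigma)$ only admits an extension $Ef\in H^{m+3/2}(Q)$; an $H^{m+2}(Q)$ extension would force $f\in H^{m+3/2}(\Sigma)$. So the reduced source $F-(\Box+a\partial_t+q)Ef$ lies only in $H^{m-1/2}(Q)$, and energy estimates for the homogeneous Dirichlet problem leave $v$ half a derivative short of $\mathscr{E}_{m+1}$ and $\partial_\nu v$ short of $H^m(\Sigma)$. You can see this at the bottom of your Step~3 induction. There $\partial_t^m v$ has Dirichlet datum $\partial_t^mf\in H^1(\Sigma)$, whose lift produces a source in $H^{-1/2}(Q)$ rather than $L^1(0,T;L^2(\Omega))$, so the Step~2 energy estimate cannot even be applied.

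The sharp implication $f\in H^1(\Sigma)\Rightarrow v\in C([0,T];H^1(\Omega))\cap C^1([0,T];L^2(\Omega))$, $\partial_\nu v\in L^2(\Sigma)$ is the Lasiecka--Lions--Triggiani theorem. It is proved without lifting, with the energy and trace estimates coupled rather than obtained separately as in your Step~2 and hidden-regularity remark. The energy identity for the non-lifted solution contains $\int_\Sigma\partial_\nu v\,\partial_tf$, which is absorbed as $\varepsilon\|\partial_\nu v\|^2_{L^2(\Sigma)}+C_\varepsilon\|\partial_tf\|^2_{L^2(\Sigma)}$. Meanwhile the $h\cdot\nabla v$ multiplier identity gives $\|\partial_\nu v\|^2_{L^2(\Sigma)}\le C\sup_tE(t)+C\|f\|^2_{H^1(\Sigma)}+C\|F\|^2_{L^1(0,T;L^2(\Omega))}$.

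There is a second omission. Iterating only on $\partial_t$ controls $\partial_\nu v$ in $H^m(0,T;L^2(\partial\Omega))$, not the tangential part of the $H^m(\Sigma)$ norm. That part needs the same argument for $Z^\alpha v$ with $Z$ tangential vector fields, since trace theory at fixed time only gives $\partial_\nu v(t)\in H^{m-1/2}(\partial\Omega)$.

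Your stated fallback does close the gap. Cite the higher-order LLT theorem for $\Box+q$, with constants uniform in $t\le T$. Then handle $a\partial_tv$ by Gronwall on $(0,t)$, using $\|a\partial_tv\|^2_{H^m((0,t)\times\Omega)}\le C\int_0^t\sum_{k=0}^{m+1}\|\partial_t^kv(s)\|^2_{H^{m+1-k}(\Omega)}\,ds$. With that, and with Step~1 discarded rather than used, the rest of your proof goes through.
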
 
Using Theorem \ref{existence: forward problem},   the  Dirichlet-to-Neumann (DtN) map $\mathcal{N}_{a,q,r}:\mathcal{F}^\varrho_{m+1} \rightarrow H^{m}(\Sigma)$, given  by 
\begin{align}\label{eq:DN map}
\begin{aligned}
\mathcal{N}_{a,q,r}(f):=\partial_{\nu} u_f \big|_{\Sigma},
\end{aligned} 
\end{align}
is well defined whenever $u_f$ is solution to \eqref{equation; IBVP} with    Dirichlet data $f\in \mathcal{F}^\varrho_{m+1}$.
The main  goal of this manuscript is to 
uniquely identify the coefficients $a$, $q$, and $r$ appearing in IBVP \eqref{equation; IBVP}
from the boundary measurements of solutions provided on a small subset of the boundary. To do so, we start by defining the partial DtN map.
 Let $\Gamma \subseteq \partial \Omega$ be an arbitrary nonempty open set  and denote $\Gamma_T := (0,T)\times \Gamma$. We denote the partial DtN map associated with \eqref{eq:main} by $\Lambda_{a,q,r}$ and it is defined by
\begin{equation}
\label{eq:revise_def_DN_map}
\Lambda_{a,q,r} (f):= \partial_\nu u_f |_{\Gamma_T} 
\end{equation}
where  $u_f$ is  the unique solution to the IBVP \eqref{equation; IBVP} when the Dirichlet data  $f\in \mathcal{F}^\varrho_{m+1} $. 

Building on previous discussions, we now state the main result of this manuscript  in the following Theorem. 
\begin{theorem}
\label{thm:main_result_partial_data}{(Statement of main result)}
Let $T> \diam(\O)$ and $\Omega \subset \mathbb{R}^d$, $m>d+1$ and $d\ge 2$, be a bounded domain with smooth boundary $\partial \O$.
Let $\mathcal{O}$ be a nonempty  open neighborhood of $\partial\Omega$ in $\overline{\Omega}$ (see Figure~\ref{fig:smooth-domain}).
Suppose that for $i=1,2$, $a_{i},q_{i},r_{i} \in C^{\infty}_c(\O)$  such that $(a_{1},q_{1},r_{1}) = (a_{2},q_{2},r_{2})$ in $ \mathcal{O}$. Then  $\Lambda_{a_{1},q_{1},r_{1}}(f)=\Lambda_{a_{2},q_{2},r_{2}}(f)$ for all $f \in \mathcal{F}^\varrho_{m+1}$ implies that $a_{1}=a_{2}$, $q_{1}= q_{2}$ and  $r_{1}=r_{2}$ in  $\Omega$.
\end{theorem}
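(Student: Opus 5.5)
We use higher-order linearization. Take $f=\sum_{k=1}^{\ell}\varepsilon_k f_k$ with small $\varepsilon_k$, $f_k\in\mathcal{F}^\varrho_{m+1}$, and let $u_j^{\varepsilon}$ be the solution of \eqref{equation; IBVP} for the coefficients $(a_j,q_j,r_j)$ given by Theorem~\ref{existence: forward problem}. By the smallness bound, $u_j^\varepsilon$ depends smoothly on $\varepsilon$. The first derivative $v_j^{(k)}=\partial_{\varepsilon_k}u_j^\varepsilon|_{\varepsilon=0}$ solves the linear damped problem
\begin{equation*}
\Box v+a_j\partial_t v+q_jv=0,\qquad v|_\Sigma=f_k,\qquad v(0)=\partial_t v(0)=0.
\end{equation*}
The mixed derivative $w_j=\partial^{\ell}_{\varepsilon_1\cdots\varepsilon_\ell}u_j^\varepsilon|_{\varepsilon=0}$ solves the same linear operator with zero Cauchy and Dirichlet data and source $-\ell!\,r_j\prod_k v_j^{(k)}$. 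Equality of the partial DtN maps then gives $\partial_\nu v_1^{(k)}=\partial_\nu v_2^{(k)}$ and $\partial_\nu w_1=\partial_\nu w_2$ on $\Gamma_T$. So the problem splits into a linear partial-data problem for $(a,q)$, followed by a recovery of $r$.

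\textbf{Step 1: from partial to full data near the boundary.} Set $v=v_1-v_2$ with the same $f$. In $\mathcal{O}$ the coefficients agree, so $v$ solves $\Box v+a_1\partial_tv+q_1v=0$ there, with $v=\partial_\nu v=0$ on $\Gamma_T$ and zero initial data. Unique continuation for the wave operator with lower-order terms (Tataru-type, using $T>\diam(\Omega)$, possibly after extending $v$ by zero to $t<0$) propagates this vanishing. We aim to conclude that $v=0$ in a region of $(0,T)\times\mathcal{O}$ reaching all of $\partial\Omega$, hence that the full DtN maps agree on the whole of $\Sigma$. This step is where the main obstacle lies. Unique continuation from $\Gamma$ only yields vanishing in a domain of dependence, not in all of $(0,T)\times\mathcal{O}$. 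We would first try to make this work by exploiting that $\mathcal{O}$ is a connected collar along which the equation is known and homogeneous, and by propagating along $\partial\Omega$ with finite-speed/time-like UCP. If that fails, we would instead use an Isakov-type integral identity: test against solutions of the adjoint equation supported away from $\partial\Omega\setminus\Gamma$, built from cut-offs in $\mathcal{O}$. The extended-by-zero trick in time would remove the obstruction coming from the time interval.

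\textbf{Step 2: recover $a$ and $q$.} Use the integral identity
\begin{equation*}
\int_Q \bigl((a_1-a_2)\partial_t v_1+(q_1-q_2)v_1\bigr)\,\overline{\psi}\,dx\,dt=0,
\end{equation*}
with $\psi$ solving the adjoint equation $\Box\psi-\partial_t(a_2\psi)+q_2\psi=0$ and vanishing at $t=T$. Insert geometric optics solutions $v_1=e^{i\lambda(t-x\cdot\omega)}(e^{-\frac12\int a_1}+O(\lambda^{-1}))$, with a corresponding $\psi$. These exist since $T>\diam(\Omega)$ and the coefficients are compactly supported. The leading order in $\lambda$ gives vanishing of the light-ray transform of an expression in $a_1-a_2$, hence $a_1=a_2$ by injectivity of the X-ray transform (Fourier slice). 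The next order, or exponentially growing/Fourier-type solutions $e^{i(\tau t+\xi\cdot x)}$ plus corrections, then gives $q_1=q_2$.

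\textbf{Step 3: recover $r$.} With $a=a_1=a_2$ and $q=q_1=q_2$ we have $v_1^{(k)}=v_2^{(k)}=:v^{(k)}$, and the boundary data of $w_1-w_2$ vanish by the reduction of Step 1 applied again. Pairing with an adjoint solution $v^{(0)}$ gives
\begin{equation*}
\int_Q (r_1-r_2)\,v^{(0)}\prod_{k=1}^{\ell}v^{(k)}\,dx\,dt=0.
\end{equation*}
Choose $v^{(1)}$ and $v^{(0)}$ as geometric optics solutions with phases $\pm\lambda(t-x\cdot\omega)$ plus a Fourier factor $e^{i\xi\cdot x}$ with $\xi\perp$ direction adapted, and take $v^{(k)}$ for $k\ge2$ as geometric optics solutions with zero phase, i.e. nonoscillatory solutions close to $1$ on the support. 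Letting $\lambda\to\infty$ yields that light-ray integrals, equivalently Fourier transforms, of $(r_1-r_2)$ times a nonvanishing known weight vanish. This gives $r_1=r_2$.
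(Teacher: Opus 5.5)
Your higher-order linearization and Step 2 essentially match the paper's Section 3. The genuine gap is Step 1, which you leave open. It is exactly what makes this a partial-data theorem, and your Step 3 depends on it too.

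\textbf{The missing ingredient in Step 1.} You need a unique continuation result inside the collar where the coefficients agree, followed by a cutoff. You never need $v$ to vanish up to $\partial\Omega$, nor the full DtN maps to agree. The paper proves (Theorem~\ref{Carleman type estimate required for ucp}, Corollary~\ref{UCP}) the following: if $\Box u+a\partial_t u+qu=F$ with $a=q=0$ in $\mathcal O$, zero Dirichlet and initial data, $F=0$ in $(0,T)\times\mathcal O$ and $\partial_\nu u=0$ on $\Gamma_T$, then $u$ vanishes on the interior collar $\mathcal O_2\setminus\mathcal O_3$. The proof is not a hyperbolic Tataru-type argument. It goes as follows.
\begin{enumerate}
\item Extend $u$ by zero to $t<0$.
\item Take the FBI transform $V^\eta(t+\mathrm{i}s,x)$ in time. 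This turns the damped wave operator into the elliptic operator $\partial_s^2+\Delta_x+\mathrm{i}a\partial_s-q$, up to errors of size $e^{-c\eta}$.
\item Apply an elliptic Carleman estimate on $(-T_\star,T_\star)\times\mathcal O$ whose boundary term lives only on $\Gamma$; this gives an interpolation inequality.
\item Return to $u$ through the subharmonic mean value inequality and the bound $\|u-V_0^\eta\|_{H^1}\lesssim\eta^{-1/2}\|u\|_{H^2}$.
\end{enumerate}
This is applied to $v=v_1-v_2$, whose source $(a_2-a_1)\partial_t v_2+(q_2-q_1)v_2$ vanishes in $\mathcal O$. Take $\Xi=0$ on $\mathcal O_3$ and $\Xi=1$ on $\Omega_2$. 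Then $\Xi v$ solves the same equation with the same source: the commutator terms live where $v=0$, and $\Xi(a_2-a_1)=a_2-a_1$. Moreover $\Xi v$ has zero Cauchy data on all of $\Sigma$. This yields the full-data identity
\[
\int_Q\bigl((a_2-a_1)\partial_tv_2+(q_2-q_1)v_2\bigr)\overline{w}\,dx\,dt=0,
\]
and the same device is reused for $U_1-U_2$.

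\textbf{Your fallback does not close the gap.} For an arbitrary small $\Gamma$, adjoint geometric optics solutions along all directions cannot be made to vanish on $\Sigma\setminus\Gamma_T$. So the unknown term $\int_{\Sigma\setminus\Gamma_T}\overline{w}\,\partial_\nu v$ survives.

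\textbf{Your domain-of-dependence worry is legitimate, and it hits this mechanism too.} Take a source supported in $(T-\delta,T)\times B$ with $B\subset\Omega\setminus\overline{\mathcal O}$ and $\operatorname{dist}(B,\mathcal O_2\setminus\mathcal O_3)<\delta<\operatorname{dist}(B,\Gamma)$. It gives $\partial_\nu u=0$ on $\Gamma_T$, yet $u\not\equiv 0$ on the collar before time $T$. So collar vanishing can only be asserted on a window $(0,T_\flat)$, with $T-T_\flat$ bounded below by a positive geometric quantity. The paper's claim that $T_\flat$ can be taken arbitrarily close to $T$, and hence Corollary~\ref{UCP} on all of $(0,T)$, does not survive this example. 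The integral identity must then be run on $(0,T_\flat)$, which requires $T_\flat>\operatorname{diam}(\Omega)$ rather than $T>\operatorname{diam}(\Omega)$.

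\textbf{A further problem in Step 3.} The $v^{(k)}$ have zero Cauchy data at $t=0$ and are driven only by boundary data. So you cannot take them close to $1$ on $(0,T)\times\operatorname{supp}(r_1-r_2)$. Any non-oscillatory choice leaves a $(t,x)$-dependent weight in the light-ray integral, i.e.\ a general weighted ray transform, whose injectivity is not automatic. The paper avoids this as follows. It takes all $v_i$ with the same phase $e^{\mathrm{i}\tau(t+x\cdot\omega)}$ and the adjoint solution with phase $e^{\mathrm{i}\ell\tau(t+x\cdot\omega)}$, so the phases cancel in $\overline{w}\prod_i v_i$. The leading term is then the attenuated transform
\[
\int_0^\infty r(z-t\omega)\exp\Bigl(-\frac{\ell-1}{2}\int_0^t a(z-\sigma\omega)\,d\sigma\Bigr)\,dt,
\]
with the already recovered $a$. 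This is inverted through its transport equation.
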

\begin{figure}[ht]
\centering
\begin{tikzpicture}[scale=0.8]
\fill[blue!10, even odd rule]
    plot [smooth cycle, tension=0.8]
    coordinates {
        (-3,0)
        (-2.2,1.3)
        (-0.5,1.8)
        (1.2,1.5)
        (2.7,0.7)
        (2.4,-0.8)
        (1.0,-1.5)
        (-0.8,-1.4)
        (-2.3,-1.0)
    }
    --
    plot [smooth cycle, tension=0.8]
    coordinates {
        (-1.8,0)
        (-1.2,0.8)
        (-0.1,1.0)
        (1.0,0.7)
        (1.5,0)
        (1.1,-0.7)
        (0,-0.9)
        (-1.2,-0.7)
    };
\draw[thick]
    plot [smooth cycle, tension=0.8]
    coordinates {
        (-3,0)
        (-2.2,1.3)
        (-0.5,1.8)
        (1.2,1.5)
        (2.7,0.7)
        (2.4,-0.8)
        (1.0,-1.5)
        (-0.8,-1.4)
        (-2.3,-1.0)
    };
\draw[thick,dotted]
    plot [smooth cycle, tension=0.8]
    coordinates {
        (-1.8,0)
        (-1.2,0.8)
        (-0.1,1.0)
        (1.0,0.7)
        (1.5,0)
        (1.1,-0.7)
        (0,-0.9)
        (-1.2,-0.7)
    };
\node at (0,0) {$\Omega_0$};
\node at (3.4,1.5) {$\partial\Omega$};

\draw[-, thick]
    (3.15,1.35) -- (2.45,0.97);
\node at (-0.8,1.3) {$\mathcal{O}=\overline{\O}\setminus\overline\O_0$};
\node at (0.0,-2)
{$\O=(\overline\O_0\cup\mathcal{O})\setminus\partial\O$};
\end{tikzpicture}
\caption{}
\label{fig:smooth-domain}
\end{figure}
\subsection{Physical significance and motivation}
The wave operator arises naturally and fundamentally in the formulation of mathematical models describing wave propagation across a wide spectrum of physical disciplines, including acoustics, cosmology, elasticity theory, seismology, and quantum mechanics. In recent years, nonlinear inverse problems associated with several physically motivated models, such as the Kuznetsov equation,
Westervelt equation, nonlinear Klein–Gordon (KG) operator, nonlinear elastic wave equations, and gravitational wave propagation, have been extensively investigated; see, for example,~\cite{Dorich2024, DEHOOP2019347, Megias2022,
AcostaUhlmannZhai2022,
ChoquetBruhat1952,
FIKO,
liu2025partialdatainverseproblem}.

In this manuscript, we investigate the semilinear damped wave operator with potential associated with the evolution equation
\begin{align}\label{eq:main}
    \Box u(t,x) + a(x)\,\partial_t u(t,x) + q(x)\,u(t,x) + r(x)\,u^{\ell}(t,x) = 0,\quad \text{in } Q,
\end{align}
where \(a(x)\) denotes the damping coefficient, \(q(x)\) is a zeroth-order potential term, and \(r(x)\) is the coefficient of the nonlinear term of order \(\ell\). In particular, in the absence of damping (i.e., when \(a \equiv 0\)) and for the choice \(q \equiv m^{2}\), the operator in \eqref{eq:main} reduces to the nonlinear Klein–Gordon (KG) operator, which plays a central role in cosmology, hadronic physics, and high-energy physics (see \cite{Megias2022} and the references therein). Consequently, the operator in Equation~\eqref{eq:main} may be interpreted as a natural generalization of the KG equation, which motivates the present investigation of this problem.
\subsection{Literature Survey}This manuscript examines the inverse problem of identifying the coefficients in a general semilinear damped wave operator from the DtN map given on a suitable portion of the boundary. Over the past few decades, such inverse problems have attracted considerable attention due to their wide range of applications. The problem investigated in the present article falls within the scope of the Calder\'on problem, named after Calder\'on \cite{Cal80}. A major advance was achieved by Sylvester and Uhlmann \cite{SylvesterUhlmann1987}, who introduced the method of constructing complex geometric optics solutions for the Calder\'on-type problem and proved unique recovery of the conductivity coefficient in the associated conductivity equation.  In the context of inverse scattering problems, an analogous approach has been examined in \cite{Novikov1988}.
Their approach has since provided a foundational framework for establishing uniqueness and stability results related to identification of coefficients from boundary measurements for a broad class of  PDEs through such specialized solutions.

Next, we mention some results related to the linear wave equations. The unique determination of a time-dependent potential in a linear hyperbolic PDE from the boundary measurements, under appropriate convexity assumptions, was established by Bukhgeim and Klibanov in~\cite{Bukhgeim1981}. In \cite{Rakesh01011988}, Rakesh and Symes achieved unique recovery of a time-independent potential in a hyperbolic PDE from the Neumann-to-Dirichlet (NtD) map by constructing so-called \textit{beam solutions}. Subsequently, for $r=0$ in Equation~\eqref{eq:main}, Isakov~\cite{Isakov1991AnIH} refined the beam solution method and simultaneously recovered the coefficients $a$ and $q$, thereby extending the result of Rakesh and Symes. Eskin~\cite{MR2235639, MR2441006} further modified the boundary control method and established the uniqueness of the time independent coefficients for a self-adjoint linear hyperbolic operator from the DtN map in a general geometric setting.
Using the Cauchy data set, Kian and Oksanen~\cite{KianOksanen2019} uniquely recover the time dependent potential term in a Riemannian manifold setting. 
We also refer to additional related works on linear hyperbolic equations concerned with the unique recovery of the coefficients; see, for instance,~\cite{
Krishnan17082020,
Kian2016RecoveryOT,
Hu2017DeterminationOS,
Kian2017, doi:10.1137/23M1588676, LiuSaksalaYan2025, Salazar2013}. For stability results pertaining to the recovery of coefficients in hyperbolic PDEs, we refer the reader to
~\cite{Aicha2015,BellassouedDosSantosFerreira2011, BellassouedStabilityWaveJMAA,Kian2016Stability, Sun1990, CipolattiLopez2005,   Senapati_2021, KumarZimmermann2026} and references therein. Also, we refer to \cite{ER97, AvdoninBelishevIvanov1992, KhanferBukhgeim2019,KumarSarkarVashisth2024,MR4343270}

and the references therein for results concerning coupled hyperbolic systems.

We now turn to some contributions concerning nonlinear hyperbolic PDEs. Isakov~\cite{Isakov1993OnUI} was the first to introduce the method of \textit{higher-order linearization} in the context of inverse problems for nonlinear parabolic PDEs, thereby opening a new approach to treating inverse problems for nonlinear evolution equations.
Nakamura et al.~\cite{NakamuraWatanabeKaltenbacher2009} investigated a one-dimensional quasilinear wave equation in divergence form and identified a time-independent coefficient from the associated input–output map by employing the method of higher-order linearization. In the setting of higher spatial dimensions, this result was further extended in \cite{Nakamura2021}.
For semilinear wave equations (without a damping term), Lin et al.~\cite{lin2024determining} analyzed the simultaneous determination of coefficients and initial data from an input–output map. More recently, Qiu et al.~\cite{Qiu2025UniquenessRF} established a uniqueness result for a semilinear wave operator with source terms, based on the Dirichlet-to-Neumann (DtN) map. In particular, they considered power-type nonlinearities and proved the uniqueness of the corresponding coefficients together with the source term. 
In a $(3+1)$-dimensional Lorentzian manifold, a significant result was obtained by Kurylev et al.~\cite{Kurylev2018InversePF}, where they settled the problem of unique determination of global topology, the differentiable structure, and the conformal class of the metric from the knowledge of the source-to-solution map. This line of research was subsequently advanced by Hintz,  Uhlmann and Zhai~\cite{HintzUhlmannZhai2022b}, who proved that the DtN map uniquely determines both the underlying Lorentzian metric and the nonlinear coefficient. Furthermore, we refer to \cite{FeizmohammadiLassasOksanen2021, ChenLassasOksanenPaternain2022, FeizmohammadiOksanen2022, KaltenbacherRundell2022,
KaltenbacherLorenzi2007,
AcostaUhlmannZhai2022, UhlmannZhai2021} for some further results on single nonlinear hyperbolic equations. For additional studies concerning inverse problems related to nonlinear PDEs which are closely related to the problem studied  in the present manuscript, the reader is referred to \cite{CARSTEA2019121, Kurylev2022, Lassas2021, SaBarretoStefanov2024} and the references cited therein.

 Recently, Bhardwaj et al. \cite{bhardwaj2026reconstructionpotentialdampingcoefficients} studied the well-posedness of the IBVP \eqref{equation; IBVP} and reconstructed the unknown coefficients from measurements of the DtN map on the entire boundary. Furthermore, to reconstruct the nonlinear coefficient, they constructed an appropriate asymptotic solution for the underlying linearized operator (see Section~3 of \cite{bhardwaj2026reconstructionpotentialdampingcoefficients}), which  plays a key role in our analysis and is employed in the present work to establish the unique determination of the coefficient $r$.

Ru-Yu Lai et al. \cite{RuYuLaiEtAlPartialDataStabilityProblem} to settle the partial data inverse problem for power-type nonlinearities in the Schr\"odinger equation using the idea of the unique continuation principle established in \cite{Bellassoued_Fraj}. Also, for the linear wave operator with a zeroth-order perturbation, an analog UCP was derived in \cite{BellassouedStabilityWaveJMAA}. Inspired by these studies, we consider
a partial data inverse problem for a semilinear damped wave operator. 
\subsection{Outline}
The remainder of this manuscript is organized as follows: Section~\ref{Unique continuation principle} is devoted to establishing the unique continuation principle. Next, in 
Section~\ref{Section: proof of main result}, we provide the proof of  Theorem \ref{thm:main_result_partial_data}. To achieve this, first we establish the unique recovery of the damping term and zeroth-order potential using the first-order linearization method, while the unique recovery of the nonlinear coefficient is achieved using the higher-order linearization method.
\section{Unique continuation principle}\label{Unique continuation principle}
This section is devoted to establishing the unique continuation principle (UCP). 
For the wave operator with a zeroth-order perturbation, the UCP was established in \cite{BellassouedStabilityWaveJMAA}. More recently, the UCP for the linear magnetic Schrödinger equation under arbitrary boundary observations was obtained in \cite{Bellassoued_Fraj}. This latter result was then utilized by Ru-Yu Lai et al. \cite{RuYuLaiEtAlPartialDataStabilityProblem} to resolve the partial data inverse problem for power-type nonlinearities in the dynamical Schrödinger equation. The objective of the present work is to extend the result of \cite{BellassouedStabilityWaveJMAA} to the setting in which the wave operator incorporates both a damping term and a zeroth-order perturbation, and subsequently to adapt the approach developed in \cite{RuYuLaiEtAlPartialDataStabilityProblem} in order to derive the principal result of this article.

First, we introduce some notations, that will be used in the forthcoming analysis. Let $\mathcal{O}\subseteq \overline{\Omega}$ be a nonempty  relatively open neighborhood of $\partial\Omega$ in $\overline{\Omega}$. We then define an admissible class of coefficients $a$ and $q$, denoted by $\mathcal{A}$ and  given by

\begin{equation}
\label{admissible class}
\begin{aligned}
\mathcal{A}
:= &
\left\{(a,q)\in C^{\infty}(\overline {\Omega})\times C^{\infty}(\overline {\Omega}):
a=q=0
\text{ in } \mathcal{O}
\right\}.
\end{aligned}
\end{equation}

Next, using the relatively open set $\mathcal{O}$, we introduce a family of relatively compact open subsets of $\mathcal{O}$ in $\overline{\Omega}$. Specifically, we consider  open sets $\mathcal{O}_{j} \subseteq \mathcal{O}$, $j=1,2,3$, such that $\overline{\mathcal{O}}_{j+1} \subset \mathcal{O}_{j}$ for $j=1,2$ and  $\partial\Omega\subset\partial\mathcal{O}_{j}$ for $j=1,2,3$. We refer to Figure \ref{fig:nested-domains} and explanation given below it, for more details about the shape of these sets.  
We also assume that
$\overline{\mathcal O}_1\cap(\partial\mathcal O\setminus\partial\Omega)
=\emptyset$, which is used in the construction of the cutoff functions in the subsequent analysis. 
\begin{figure}[ht]
\centering

\begin{tikzpicture}[scale=0.9]
\draw[thick]
    plot [smooth cycle, tension=0.8]
    coordinates {
        (-4.0,0)
        (-3.0,1.8)
        (-1.2,2.3)
        (1.0,2.1)
        (3.2,1.2)
        (3.6,-0.5)
        (2.3,-1.8)
        (0.3,-2.2)
        (-1.8,-2.0)
        (-3.4,-1.2)
    };

\fill[magenta!10]
    plot [smooth cycle, tension=0.8]
    coordinates {
        (-3.0,0)
        (-2.3,1.3)
        (-0.8,1.7)
        (1.0,1.5)
        (2.5,0.8)
        (2.7,-0.4)
        (1.7,-1.3)
        (0.1,-1.6)
        (-1.6,-1.4)
        (-2.6,-0.8)
    };

\draw[thick,dotted]
    plot [smooth cycle, tension=0.8]
    coordinates {
        (-3.0,0)
        (-2.3,1.3)
        (-0.8,1.7)
        (1.0,1.5)
        (2.5,0.8)
        (2.7,-0.4)
        (1.7,-1.3)
        (0.1,-1.6)
        (-1.6,-1.4)
        (-2.6,-0.8)
    };

\fill[yellow!15]
    plot [smooth cycle, tension=0.8]
    coordinates {
        (-2.3,0)
        (-1.8,1.0)
        (-0.6,1.3)
        (0.8,1.2)
        (1.9,0.6)
        (2.0,-0.3)
        (1.2,-1.0)
        (0.0,-1.2)
        (-1.3,-1.0)
        (-2.0,-0.6)
    };

\draw[thick,dotted]
    plot [smooth cycle, tension=0.8]
    coordinates {
        (-2.3,0)
        (-1.8,1.0)
        (-0.6,1.3)
        (0.8,1.2)
        (1.9,0.6)
        (2.0,-0.3)
        (1.2,-1.0)
        (0.0,-1.2)
        (-1.3,-1.0)
        (-2.0,-0.6)
    };

\fill[gray!20]
    plot [smooth cycle, tension=0.8]
    coordinates {
        (-1.6,0)
        (-1.2,0.7)
        (-0.4,0.95)
        (0.6,0.85)
        (1.3,0.4)
        (1.4,-0.25)
        (0.8,-0.75)
        (0.0,-0.9)
        (-0.9,-0.75)
        (-1.4,-0.4)
    };

\draw[thick,dotted]
    plot [smooth cycle, tension=0.8]
    coordinates {
        (-1.6,0)
        (-1.2,0.7)
        (-0.4,0.95)
        (0.6,0.85)
        (1.3,0.4)
        (1.4,-0.25)
        (0.8,-0.75)
        (0.0,-0.9)
        (-0.9,-0.75)
        (-1.4,-0.4)
    };

\fill[blue!15]
    plot [smooth cycle, tension=0.8]
    coordinates {
        (-0.95,0)
        (-0.65,0.45)
        (-0.15,0.60)
        (0.45,0.50)
        (0.80,0.20)
        (0.85,-0.20)
        (0.45,-0.50)
        (-0.10,-0.60)
        (-0.60,-0.45)
        (-0.85,-0.20)
    };

\draw[thick,dotted]
    plot [smooth cycle, tension=0.8]
    coordinates {
        (-0.95,0)
        (-0.65,0.45)
        (-0.15,0.60)
        (0.45,0.50)
        (0.80,0.20)
        (0.85,-0.20)
        (0.45,-0.50)
        (-0.10,-0.60)
        (-0.60,-0.45)
        (-0.85,-0.20)
    };

\node at (3.25,0.15) {$\mathcal{O}_3$};

\node at (-1.9,-1.0) {$\O_3$};

\node at (1.40,0.75) {$\O_2$};

\node at (-1.20,0.25) {$\O_1$};
\node at (0,0) {$\O_0$};
\node[
    align=left,
    anchor=north,
    font=\small
] at (0,-2.3)
{
$\displaystyle
\mathcal{O}:=\overline \Omega\setminus\overline{\O_0},
\quad
\mathcal{O}_1:=\overline \Omega\setminus(\overline{\Omega_0}\cup \O_1),\quad 
\mathcal{O}_2:=\overline\Omega\setminus(\overline{\Omega_2} \cup \O_1\cup\overline{\O_0}),\quad
\mathcal{O}_3:=\overline \O\setminus( \overline{\Omega_3}\cup\Omega_2\cup \overline{\O_1}\cup \O_0).
$
};

\end{tikzpicture}

\caption{Nested domains and the corresponding subregions.}
\label{fig:nested-domains}

\end{figure}

We are now in a position to state the main result of this subsection, which is given by the following Theorem.
\begin{theorem}\label{Carleman type estimate required for ucp}
    For $(a,q)\in \mathcal{A}$, $F \in L^2(Q)$, let $u\in H^2(Q)$ solve the linear IBVP
    \begin{align}\label{eq:wave-UCP}
        \begin{cases}
            \Box u(t,x) + a(x)\,\partial_t u(t,x) + q(x)\,u(t,x) = F(t,x)  & (t,x) \in Q,\\[0.3em]
            u(t,x) = 0 & (t,x) \in \Sigma,\\[0.3em]
            u(0,x) = 0\quad \partial_t u(0,x) = 0 &\quad x \in \Omega.
        \end{cases}
    \end{align}
   Then for any $T_{\flat}\in(0,T)$, there exist positive constants $C = C(Q,\mathcal{O}) > 0$, $\beta> 0$ and $\eta_0 > 1$ such that, for every $\eta \geq \eta_0$, the following estimate  
   
     \begin{align}\label{carleman for ucp}
        \bigl\lVert u\bigr\rVert_{H^1\bigl((0,T_{\flat})\times (\mathcal{O}_2\setminus \mathcal{O}_3)\bigr)}
        \leq \frac{C}{\sqrt{\eta}} \bigl\lVert u\bigr\rVert_{H^2(Q)}
        +C e^{\beta \eta} \left( \bigl\lVert F\bigr\rVert_{L^2\bigl((0,T)\times \mathcal{O}\bigr)}
        + \bigl\lVert \partial_\nu u\bigr\rVert_{L^2(\Gamma_T)} \right)
    \end{align}
    holds.
\end{theorem}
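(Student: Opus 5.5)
The approach follows \cite{BellassouedStabilityWaveJMAA}: a local Carleman estimate near the boundary combined with a cutoff argument. The key observation is that in $\mathcal{O}$ the coefficients vanish, because $(a,q)\in\mathcal{A}$. Hence $\Box u = F$ in $(0,T)\times\mathcal{O}$, and the damping and potential terms never enter the estimate. So it suffices to prove a Carleman-type estimate for the pure d'Alembertian in a neighborhood of the boundary, with observation only on $\Gamma_T$.

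\textbf{Step 1: cutoffs.}
Choose $\chi\in C^\infty(\overline\Omega)$ with $\chi=1$ on $\mathcal{O}_2$ and $\operatorname{supp}\chi\subset\mathcal{O}_1$. This is possible because $\overline{\mathcal O}_2\subset\mathcal O_1$ and $\overline{\mathcal O}_1$ stays away from $\partial\mathcal O\setminus\partial\Omega$. Choose also a time cutoff $\theta\in C^\infty([0,T])$ with $\theta=1$ on $[0,T_\flat]$ and $\theta=0$ near $T$.

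Set $w=\theta\chi u$. Then $w$ vanishes on $\Sigma$, has zero Cauchy data at $t=0$, and vanishes near $t=T$ and near the inner boundary of $\mathcal O_1$. It satisfies
\[
\Box w = \theta\chi F + [\Box,\theta\chi]u .
\]
The commutator is a first-order operator supported in $(0,T)\times(\mathcal O_1\setminus\mathcal O_2)$ or in $\{\theta'\neq0\}$.

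\textbf{Step 2: Carleman estimate.}
Apply a Carleman estimate for $\Box$ with a weight $e^{\eta\varphi}$ built from $\psi(x)$. The function $\psi$ is chosen so that:
\begin{itemize}
\item its level sets are pseudoconvex for $\Box$ near $\partial\Omega$ (e.g.\ $\psi$ is a function of the distance to $\partial\Omega$, or $|x-x_0|^2-\gamma t^2$ with $x_0$ chosen appropriately);
\item $\varphi$ is largest near $\partial\Omega$, is larger on $\mathcal O_2\setminus\mathcal O_3$ than on the commutator region $\mathcal O_1\setminus\mathcal O_2$, and the boundary term has the right sign on $\partial\Omega\setminus\Gamma$.
\end{itemize}
The Carleman inequality then gives
\[
\eta\!\int e^{2\eta\varphi}\bigl(|\nabla_{t,x}w|^2+\eta^2|w|^2\bigr)\le C\!\int e^{2\eta\varphi}|\Box w|^2 + C\eta\!\int_{\Gamma_T}e^{2\eta\varphi}|\partial_\nu w|^2 .
\]
Here the time dependence is harmless because $w$ vanishes near both $t=0$ (zero Cauchy data extended by zero) and $t=T$.

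\textbf{Step 3: splitting the right-hand side.}
Split the right-hand side into three parts:
\begin{itemize}
\item the $F$ and $\Gamma_T$ terms, which are bounded by $e^{\beta\eta}\bigl(\|F\|_{L^2((0,T)\times\mathcal O)}+\|\partial_\nu u\|_{L^2(\Gamma_T)}\bigr)$ with $\beta=2\max\varphi$;
\item the commutator terms, bounded by $e^{2\eta\varphi_1}\|u\|_{H^1}^2$, where $\varphi_1=\max\varphi$ over the commutator region;
\item the time-cutoff terms near $t=T$, handled the same way by letting $\varphi$ decrease in $t$ (the $-\gamma t^2$ part).
\end{itemize}
On the left, restrict to $(0,T_\flat)\times(\mathcal O_2\setminus\mathcal O_3)$, where $w=u$ and $\varphi\ge\varphi_0>\varphi_1$. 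Dividing by $e^{2\eta\varphi_0}$ gives $\|u\|_{H^1}^2\le C\eta^{-1}e^{-2\eta(\varphi_0-\varphi_1)}\|u\|_{H^2}^2+Ce^{C\eta}(\dots)$. This is even stronger than the claimed $C/\sqrt\eta$ bound after taking square roots. If a Carleman estimate with only weaker separation is available, the $C/\sqrt\eta$ factor still comes directly from the $\eta$ on the left-hand side.

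\textbf{Main obstacle.}
The hard step is constructing the weight $\varphi$. It must simultaneously satisfy three conditions:
\begin{itemize}
\item pseudoconvexity for $\Box$ throughout $\mathcal O_1$ over $(0,T)$;
\item a boundary term with a favourable sign on $(\partial\Omega\setminus\Gamma)\times(0,T)$, so that only $\Gamma_T$ is observed;
\item the level-set ordering between $\mathcal O_2\setminus\mathcal O_3$ and the cutoff regions.
\end{itemize}
The first thing to try is the weight of \cite{BellassouedStabilityWaveJMAA}, localized in small patches along the boundary and iterated by a covering argument so that the estimate propagates from $\Gamma$ across the collar. If a single global weight fails, fall back on this patching: chain local unique-continuation Carleman estimates across a finite cover of the collar $\mathcal O_2\setminus\mathcal O_3$, each step introducing a factor of the same $C\eta^{-1/2}$ plus $e^{\beta\eta}$ form.
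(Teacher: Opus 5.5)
Your Step~2 is where the argument breaks, and the patching fallback does not repair it: for an arbitrary open $\Gamma\subset\partial\Omega$ there is no weight with the properties you list.

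First, pseudoconvexity. For $\Box$, with symbol $p=|\xi|^2-\tau^2$, strong pseudoconvexity is a property of the level sets of the weight. Unlike the elliptic case, it cannot be produced by convexifying an arbitrary $\psi$ with $\nabla\psi\neq0$: on the set where $p=\{p,\psi\}=0$, the term $\lambda\{p,\psi\}^2$ gained by passing to $e^{\lambda\psi}$ vanishes. So the level sets must be genuinely curved, as for $|x-x_0|^2-\gamma t^2$ with $0<\gamma<1$.

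Second, the boundary term. Discarding it on $(\partial\Omega\setminus\Gamma)\times(0,T)$ requires $\partial_\nu\varphi\le0$ there. This contradicts your requirement that $\varphi$ be largest near $\partial\Omega$. For the standard pseudoconvex weights it also forces $\Gamma\supset\{x\in\partial\Omega:(x-x_0)\cdot\nu(x)>0\}$, which is more than a hemisphere when $\Omega$ is a ball.

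Your own Step~3 shows that no admissible weight can exist. It would give $\|u\|_{H^1}\le Ce^{-c\eta}\|u\|_{H^2(Q)}+Ce^{C\eta}(\cdots)$, i.e.\ H\"older-type unique continuation from an arbitrarily small $\Gamma$, and that is false. Take a Gaussian beam of frequency $k$ along a billiard ray that crosses $\mathcal O_2\setminus\mathcal O_3$ but never meets $\overline{\Gamma}$, generated by a source supported away from $\mathcal O$. Its data on the right-hand side are $O(k^{-\infty})$, while its $H^1$-norm on the collar is of order $k$ and its $H^2(Q)$-norm of order $k^2$.

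Patching local estimates fails for the same reason. Local Carleman estimates for $\Box$ need strongly pseudoconvex surfaces, and unique continuation across general timelike surfaces for time-independent coefficients (Tataru, Robbiano--Zuily) is proved through analyticity in $t$. Finally, the cutoff near $t=T$ cannot be absorbed by $-\gamma t^2$ when $T_\flat$ is close to $T$, because pseudoconvexity caps $\gamma<1$; this is finite speed of propagation.

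The missing idea, which is how the paper proceeds, is to use the time-independence of $a,q$ to trade the hyperbolic problem for an elliptic one. After extending $u$ by zero to $t<0$, the paper takes the Gaussian (FBI) transform $V^\eta(t+\mathrm{i}s,x)$ of $\psi u$ in time. On $\mathcal O$ it satisfies $(\partial_s^2+\Delta_x)V^\eta=-\mathcal T^\eta(\psi F)+R^\eta$, where $R^\eta$ collects the $\psi',\psi''$ terms and is $O(e^{-c_R\eta})$ by Gaussian decay.

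For $\partial_s^2+\Delta_x$, a weight $\ell$ with $|\nabla\ell|>0$, $\ell>0$ in $\mathcal O$, and $\ell=0$, $\partial_\nu\ell\le0$ on $\partial\mathcal O\setminus\Gamma$ exists for every nonempty open $\Gamma$. The remaining steps are:
\begin{itemize}
\item the Bellassoued--Fraj estimate (Lemma~\ref{lemma:Carleman_estimate}), spatial and $s$-cutoffs, and optimization in $\mu$ give the interpolation inequality of Lemma~\ref{lemma:Veta-interpolation};
\item subharmonicity of $|V^\eta(\cdot,x)|^2$ brings this back to $s=0$ (Lemma~\ref{Lemma:Projection_V0});
\item Plancherel gives $\|\psi u-V_0^\eta\|_{H^1}\le C\eta^{-1/2}\|\psi u\|_{H^2}$ (Lemma~\ref{lemma:u-versus-V0});
\item a covering in time concludes.
\end{itemize}
The Plancherel bound, not the factor $\eta$ on the left of a Carleman inequality, is what produces the $C/\sqrt\eta$ term, i.e.\ logarithmic rather than H\"older stability.

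Be aware that this argument also needs a large time budget, $T>T_0>4T_\star/(3\sqrt\theta)$, and yields the estimate only on $(0,T-T_\sharp+T')$. The paper's closing claim that $T_0$ may be taken arbitrarily small is unjustified. Indeed no argument can give every $T_\flat<T$: a source supported in $\Omega\setminus\overline{\mathcal O}$ and switched on shortly before $T$ makes $u\neq0$ on $(0,T_\flat)\times(\mathcal O_2\setminus\mathcal O_3)$ for $T_\flat$ close to $T$, while $F=0$ on $(0,T)\times\mathcal O$ and $\partial_\nu u=0$ on $\Gamma_T$.
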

Following Theorem \ref{Carleman type estimate required for ucp}, we obtain the following unique continuation result.
\begin{corollary}\label{UCP} 
In addition to the hypotheses of Theorem \ref{Carleman type estimate required for ucp}, assume that \(\operatorname{supp}(F) \subset (0,T) \times (\Omega\setminus\mathcal{O})\), and that the Neumann boundary condition \(\partial_{\nu}u = 0\) holds on \(\Gamma_T\). Under these assumptions, it follows that  $u = 0$, in $(0,T) \times (\mathcal{O}_2\setminus \mathcal{O}_3)$.
\end{corollary}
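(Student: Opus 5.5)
The plan is to apply the estimate \eqref{carleman for ucp} of Theorem \ref{Carleman type estimate required for ucp} directly and then let $\eta\to\infty$. Under the extra hypotheses of the corollary, both terms multiplied by the large weight $e^{\beta\eta}$ vanish. This leaves only the term $C\eta^{-1/2}\|u\|_{H^2(Q)}$ on the right-hand side.

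\textbf{Step 1: the source term vanishes on $\mathcal{O}$.} Since $\operatorname{supp}(F)\subset(0,T)\times(\Omega\setminus\mathcal{O})$, we have $F=0$ almost everywhere on $(0,T)\times\mathcal{O}$. If the support is read as a closed set meeting $\mathcal{O}$ only on its relative boundary, that set has measure zero and does not affect the $L^2$ norm. Hence $\|F\|_{L^2((0,T)\times\mathcal{O})}=0$.

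\textbf{Step 2: the boundary term vanishes.} By assumption $\partial_\nu u=0$ on $\Gamma_T$, so $\|\partial_\nu u\|_{L^2(\Gamma_T)}=0$. The trace is well defined because $u\in H^2(Q)$.

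\textbf{Step 3: pass to the limit.} Fix $T_\flat\in(0,T)$. For every $\eta\ge\eta_0$, \eqref{carleman for ucp} now reads
\[
\|u\|_{H^1((0,T_\flat)\times(\mathcal{O}_2\setminus\mathcal{O}_3))}\le \frac{C}{\sqrt{\eta}}\,\|u\|_{H^2(Q)}.
\]
The right-hand side is finite because $u\in H^2(Q)$, and the constant $C$ does not depend on $\eta$. Letting $\eta\to\infty$ gives $u=0$ in $(0,T_\flat)\times(\mathcal{O}_2\setminus\mathcal{O}_3)$.

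\textbf{Step 4: exhaust $(0,T)$.} The conclusion holds for every $T_\flat<T$, and $(0,T)=\bigcup_{T_\flat<T}(0,T_\flat)$. Therefore $u=0$ almost everywhere in $(0,T)\times(\mathcal{O}_2\setminus\mathcal{O}_3)$.

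The only point needing care is the interpretation of the support condition in Step 1. The whole argument hinges on $F$ vanishing on all of $(0,T)\times\mathcal{O}$ in the $L^2$ sense, which is exactly what the hypothesis provides. Beyond that, the corollary is an immediate consequence of Theorem \ref{Carleman type estimate required for ucp}.
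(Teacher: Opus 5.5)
Your proposal is correct and follows the paper's proof: you use the hypotheses to make the $F$ and $\partial_\nu u$ terms in \eqref{carleman for ucp} vanish, let $\eta\to\infty$ with $C$ independent of $\eta$, and then use that $T_\flat<T$ is arbitrary. Your remark in Step 1 about the support meeting $\mathcal{O}$ on a null set is unnecessary: $\mathcal{O}$ and $\Omega\setminus\mathcal{O}$ are disjoint, so the support hypothesis gives $F=0$ a.e.\ on $(0,T)\times\mathcal{O}$ directly.
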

\begin{proof}
    From the support condition, we obtain $\bigl\lVert F\bigr\rVert_{L^2\bigl((0,T)\times \mathcal{O}\bigr)}=0$, and the Neumann boundary condition gives $\|\partial_\nu u\|_{L^2(\Gamma_T)}=0$. Hence,  the estimate \eqref{carleman for ucp} gives
    \begin{equation*}
         \bigl\lVert u\bigr\rVert_{H^1\bigl((0,T_{\flat})\times (\mathcal{O}_2\setminus \mathcal{O}_3)\bigr)}
        \leq \frac{C}{\sqrt{\eta}} \bigl\lVert u\bigr\rVert_{H^2(Q)}\qquad \text{ for } \eta\geq\eta_0.
    \end{equation*}
    Letting $\eta\to\infty$ yields $\bigl\lVert u\bigr\rVert_{H^1\bigl((0,T_{\flat})\times (\mathcal{O}_2\setminus \mathcal{O}_3)\bigr)}=0$, since $T_{\flat}<T$ is arbitrary, which proves $u = 0$ in $(0,T) \times (\mathcal{O}_2\setminus \mathcal{O}_3)$. 
\end{proof}
Next, we will prove Theorem \ref{Carleman type estimate required for ucp} and follow the methods developed in \cite{BellassouedStabilityWaveJMAA}. Firstly, we will extend the IBVP \eqref{eq:wave-UCP} in the interval $(-T,T)\times\O$, and to obtain this, we will define $\widetilde{u}$ and $\widetilde{F}$ as follows
\begin{equation*}
\widetilde{u}(t,x)=
    \begin{cases}
u(t,x) \qquad &(t,x)\in(0,T)\times \Omega,\\
0\qquad &(t,x)\in(-T,0]\times \Omega.
    \end{cases} 
    \qquad
\widetilde F(t,x)
=
\begin{cases}
 F(t,x)&\text{ for }(t,x)\in(0,T)\times \Omega,\\
 0&\text{ for }(t,x)\in(-T,0]\times \Omega.
\end{cases}
\end{equation*}
Next in the following Lemma, we show that $\widetilde{u}\in H^{2}\left( (-T,T)\times \Omega\right)$ and  $\widetilde{u}$ solves the following IBVP \begin{align}\label{eq:wave-UCP-extended}
        \begin{cases}
            \Box\widetilde{u}(t,x) + a(x)\,\partial_t\widetilde{u}(t,x) + q(x)\,\widetilde{u}(t,x) = \widetilde{F}(t,x)  &(t,x)\in (-T,T)\times  \Omega,\\[0.3em]
           \widetilde{u}(t,x) = 0 &(t,x)\in (-T,T)\times \partial\Omega,\\[0.3em]
           \widetilde{u}(0,x) = 0\quad \partial_t\widetilde{u}(0,x) = 0 &\quad x \in \Omega.
        \end{cases}
    \end{align}
    \begin{lemma}
        Let  $F\in L^{2}(Q)$, $u\in H^{2}(Q)$ solve \eqref{eq:wave-UCP} for $(a,q)\in\mathcal{A}$. Then
$\widetilde u\in H^2((-T,T)\times\Omega)$, $\widetilde F\in L^2((-T,T)\times\Omega)$, and
$\widetilde u$ solves \eqref{eq:wave-UCP-extended}. 
    \end{lemma}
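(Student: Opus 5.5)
The plan is the standard gluing argument for zero extensions in Sobolev spaces, organised around the traces at $t=0$. The goal is to show that the distributional derivatives of $\widetilde u$ up to order two on $(-T,T)\times\Omega$ are the zero extensions of the corresponding derivatives of $u$. Write $Q_-=(-T,0)\times\Omega$ and $\widetilde Q=(-T,T)\times\Omega$.

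First, $\widetilde F\in L^2(\widetilde Q)$ is immediate, since $\|\widetilde F\|_{L^2(\widetilde Q)}=\|F\|_{L^2(Q)}$. For $\widetilde u$, take a test function $\varphi\in C_c^\infty(\widetilde Q)$ and compute $\int_{\widetilde Q}\widetilde u\,\partial^\alpha\varphi$ for $|\alpha|\le 2$ by integrating by parts in $Q$ only.

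For purely spatial derivatives $\partial_x^\alpha$, no boundary terms arise, because $\varphi$ has compact support in $x$. Hence $\partial_x^\alpha\widetilde u$ is simply the zero extension of $\partial_x^\alpha u$.

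For derivatives involving $\partial_t$, integration by parts in $t$ over $(0,T)$ produces boundary terms at $t=0$:
\begin{itemize}
\item A single $\partial_t$ produces $-\int_\Omega u(0,x)\varphi(0,x)\,dx$.
\item $\partial_t^2$ produces additionally $\int_\Omega \partial_t u(0,x)\varphi(0,x)\,dx$.
\item The mixed derivative $\partial_t\partial_{x_j}$ produces $-\int_\Omega \partial_{x_j}u(0,x)\varphi(0,x)\,dx$.
\end{itemize}
All of these traces are well defined because $u\in H^2(Q)$: the maps $u(0,\cdot)\in H^{3/2}(\Omega)$ and $\partial_t u(0,\cdot)\in H^{1/2}(\Omega)$ are trace maps, so the terms make sense. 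By the initial conditions $u(0,\cdot)=\partial_t u(0,\cdot)=0$, every term vanishes; in particular $\partial_{x_j}u(0,\cdot)=0$ because $u(0,\cdot)=0$. Therefore every weak derivative of order at most $2$ of $\widetilde u$ equals the zero extension of the corresponding derivative of $u$, which lies in $L^2$. This gives $\widetilde u\in H^2(\widetilde Q)$ with $\|\widetilde u\|_{H^2(\widetilde Q)}=\|u\|_{H^2(Q)}$.

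It remains to verify \eqref{eq:wave-UCP-extended}. By the previous step, the equation holds a.e. in $Q$, inherited from \eqref{eq:wave-UCP}. On $Q_-$ both sides are identically zero, since $\widetilde u=0$ there and $\widetilde F=0$ there, while $a$ and $q$ are time independent. Because $\widetilde u\in H^2(\widetilde Q)$, the equation holds in $L^2(\widetilde Q)$ and the hyperplane $\{t=0\}$ has measure zero. The boundary condition holds on $(0,T)\times\partial\Omega$ by assumption and on $(-T,0]\times\partial\Omega$ trivially. The initial traces of $\widetilde u$ at $t=0$ coincide with those of $u$ computed from $Q$, hence vanish. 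One can also see this from the $Q_-$ side, where the traces of the zero function are zero; consistency of the two one-sided traces is automatic since $\widetilde u\in H^2$.

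The main obstacle is justifying the integration by parts at $t=0$ for $u$ that is only $H^2(Q)$. To handle it, I would approximate $u$ by smooth functions in $C^\infty(\overline Q)$, which is possible because $Q$ is Lipschitz. The identities are then valid for the approximants, and one passes to the limit using continuity of the trace maps $H^2(Q)\to H^{3/2}(\{0\}\times\Omega)$ and $H^1(Q)\to H^{1/2}(\{0\}\times\Omega)$, the latter applied to $\partial_t u$ and $\partial_{x_j}u$.
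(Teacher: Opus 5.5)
Your proposal is correct and follows essentially the same route as the paper. Both arguments test against $\varphi\in C_c^\infty((-T,T)\times\Omega)$ and integrate by parts in $t$; the boundary terms at $t=0$ vanish by $u(0,\cdot)=\partial_t u(0,\cdot)=0$, spatial derivatives produce no boundary terms, and so every derivative of $\widetilde u$ up to order two is the zero extension of the corresponding derivative of $u$. You also supply the density-plus-trace-continuity justification for integrating by parts with $u\in H^2(Q)$, and you check the boundary and initial conditions of the extended problem, both of which the paper leaves implicit.
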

    \begin{proof}
        For any $\varphi\in C_c^\infty((-T,T)\times\Omega)$ and using integration by parts in time variable, we get
        \begin{align*}
            \int_{-T}^{T}\int_{\Omega}\widetilde u\,\partial_t\varphi\,dx\,dt& =\int_{0}^{T}\int_{\Omega}u\,\partial_t\varphi\,dx\,dt
=-\int_{0}^{T}\int_{\Omega}\partial_tu\,\varphi\,dx\,dt-\int_{\Omega}u(0,x)\varphi(0,x)\,dx\\
& =-\int_{-T}^{T}\int_{\Omega}\widetilde{\partial_tu}\,\varphi\,dx\,dt,
        \end{align*}
        where we have used that $u(0,\cdot)=0,$ in $\O$. Hence $\partial_t\widetilde u=\widetilde{\partial_tu}\in L^2((-T,T)\times\Omega)$, where $\widetilde{\partial_tu}$ denotes the extension of $\partial_{t}u$ by zero in the interval $(-T,0]\times\Omega$.  Repeating the computation above with $\partial_tu$ in place of $u$ gives the boundary term $-\int_\Omega\partial_tu(0,x)\varphi(0,x)\,dx$ which is again zero because of $\partial_{t}u(0,\cdot)=0$ in $\O$, hence $\partial_t^2\widetilde u=\widetilde{\partial_t^2u}\in L^2((-T,T)\times\Omega)$. For a
spatial derivative there is no boundary term, so $\partial^2_{ij}\widetilde
u=\widetilde{\partial^2_{ij}u}$ for $i,j=1,2,\dots,d$. Similarly, we can show for the mixed derivative that $\partial_t\partial_{j}\widetilde u=\widetilde{\partial_t\partial_{j}u}$ for $j=1,2,\dots,d$. Therefore $\widetilde u\in H^2((-T,T)\times\Omega)$.
Since the weak derivative of $\widetilde{u}$ upto order two are zero extension of $u$ and the coefficients $a$ and $q$ are function of only spatial variable proves that $\Box\widetilde{u} + a\,\partial_t\widetilde{u} + q\,\widetilde{u}$ is zero extension of $ \Box u + a\,\partial_t u + q\,u$ which proves \eqref{eq:wave-UCP-extended} and completes the proof of the lemma.
    \end{proof}
    For our further calculations, we will continue to denote the extensions $\widetilde{u}$ and $\widetilde{F}$ by $u$ and $F$ respectively. Now let $0<T_{0}<T$, and choose a time-cutoff function $\psi\in C_{c}^{\infty}(-T,T)$ such that $0\leq\psi\leq1$ and $\psi=1$ on $[-T_{0},T_{0}]$. For $\eta\geq 1$, we define the Fourier-Bros-Iagolnitzer (FBI) transform  of $\psi u$ as follows
\begin{equation}\label{definition of V eta}
V^{\eta}(z,x):= \sqrt{\frac{\eta}{2\pi}} \int_{\mathbb{R}} \mathfrak{K}^{\eta}(z-\gamma) \psi(\gamma)u(\gamma,x)\,d\gamma,
\end{equation}
where $\mathfrak{K}^{\eta}(z)=\exp\left(\frac{-\eta z^2}{2}\right) $ for $z\in\mathbb{C}$. The mapping $ \mathbb{C}\ni z\longmapsto
\mathfrak{K}^{\eta}(z)\in \mathbb{C}$ is entire, and hence for every fixed $\gamma\in\mathbb{R}$, the translated mapping $z \longmapsto \mathfrak{K}^{\eta}(z-\gamma)$ is also entire. Moreover, since $\psi\in C_{c}^{\infty}(-T,T)$, the function $\gamma\longmapsto \psi(\gamma)u(\gamma,x)$ has compact support in $(-T,T)$. Therefore, for every fixed $x\in\Omega$, the function $V^{\eta}(\cdot,x) $ defined in \eqref{definition of V eta} is entire.

Now write $z=t+\mathrm{i} s$ then, by the chain rule, we get
\begin{align}\label{eq:kernel-derivatives}
   \partial_{s}\mathfrak{K}^{\eta}(z-\gamma)&= -\mathrm{i}\eta(z-\gamma)\mathfrak{K}^{\eta}(z-\gamma),\\
   \partial_\gamma\mathfrak K^\eta(z-\gamma)&=\eta(z-\gamma)
\mathfrak K^\eta(z-\gamma).
\end{align}
On the other hand, using the consequence  $\partial_{s}\mathfrak{K}^{\eta}(z-\gamma)= -\mathrm{i}\partial_{\gamma}\mathfrak{K}^{\eta}(z-\gamma)$ of    \eqref{eq:kernel-derivatives} together with integration by parts, we obtain
\begin{align}\label{eq:first-s-derivative-V-eta}
    \partial_{s}V^{\eta}(t+\mathrm{i}s,x)&=-\mathrm{i}\sqrt{\frac{\eta}{2\pi}}\int_{\mathbb{R}}\partial_{\gamma}\mathfrak{K}^{\eta}(z-\gamma) \psi(\gamma)u(\gamma,x)\,d\gamma\\
    &= \mathrm{i}\sqrt{\frac{\eta}{2\pi}}\int_{\mathbb{R}}\mathfrak{K}^{\eta}(z-\gamma) \partial_{\gamma}\left(\psi(\gamma)u(\gamma,x)\right)\,d\gamma\\
     &= \mathrm{i}\sqrt{\frac{\eta}{2\pi}}\int_{\mathbb{R}}\mathfrak{K}^{\eta}(z-\gamma) \partial_{\gamma}\psi(\gamma)u(\gamma,x)\,d\gamma+  \mathrm{i}\sqrt{\frac{\eta}{2\pi}}\int_{\mathbb{R}}\mathfrak{K}^{\eta}(z-\gamma) \psi(\gamma)\partial_{\gamma}u(\gamma,x)\,d\gamma\\
     &= \mathrm{i}\mathcal T^\eta(\psi'u)(t+\mathrm{i}s,x)+\mathrm{i}\mathcal T^\eta
\bigl(\psi\partial_\gamma u\bigr)(t+\mathrm{i}s,x),
\end{align}
where $\mathcal{T}^{\eta}(f)(z,x):= \sqrt{\frac{\eta}{2\pi}}\int_{\mathbb{R}}\mathfrak{K}^{\eta}(z-\gamma) f(\gamma,x)\,d\gamma$. In particular,
\begin{equation}\label{eq:solve-time-derivative}
    \mathcal{T}^{\eta}(\psi\partial_\gamma u)
    =-\mathrm{i}\partial_sV^\eta-\mathcal{T}^{\eta}(\psi'u).
\end{equation}
Applying the same argument again gives
\begin{align}\label{eq:second-derivative}
\partial_s^2V^\eta
&=\mathrm{i}^2\mathcal{T}^{\eta}\bigl(\partial_\gamma^2(\psi u)\bigr)\\
&=-\mathcal{T}^{\eta}(\psi''u)
  -2\mathcal{T}^{\eta}(\psi'\partial_\gamma u)
  -\mathcal{T}^{\eta}(\psi\partial_\gamma^2u).
\end{align}
Combining the above equation together with $ \Delta_xV^\eta=\mathcal{T}^{\eta}(\psi\Delta_xu)$, we deduce 
\begin{align}
\partial_s^2V^\eta+\Delta_xV^\eta
&=-\mathcal{T}^{\eta}(\psi''u)
  -2\mathcal{T}^{\eta}(\psi'\partial_\gamma u)
  -\mathcal{T}^{\eta}\bigl(\psi(\partial_\gamma^2-\Delta_x)u\bigr).
\end{align}
Using equation \eqref{eq:wave-UCP-extended} in the last term of the above expression, we obtain 
\begin{align}
\partial_s^2V^\eta+\Delta_xV^\eta
=&-\mathcal{T}^{\eta}(\psi''u)
-2\mathcal{T}^{\eta}(\psi'\partial_\gamma u)
-\mathcal{T}^{\eta}(\psi F)
+a(x)\mathcal{T}^{\eta}(\psi\partial_\gamma u)
+q(x)V^\eta.
\end{align}
From the equation \eqref{eq:solve-time-derivative}, we obtained the modified FBI transformed equation as follows
\begin{equation}\label{eq:modified-fbi-equation}
\begin{aligned}
&\Bigl(
\partial_s^2+\Delta_x
+\mathrm{i} a(x)\partial_s-q(x)
\Bigr)V^\eta(t+\mathrm{i} s,x)
\\
&\quad
=-\mathcal{T}^{\eta}(\psi F)(t+\mathrm{i} s,x)
  -\mathcal{T}^{\eta}(\psi''u)(t+\mathrm{i} s,x)
\\
&\qquad\qquad
  -2\mathcal{T}^{\eta}(\psi'\partial_\gamma u)(t+\mathrm{i} s,x)
  -a(x)\mathcal{T}^{\eta}(\psi'u)(t+\mathrm{i} s,x).
\end{aligned}
\end{equation}
Equivalently, in integral form,
\begin{align}\label{eq:correct-fbi-integral}
\begin{aligned}
&\Bigl(
\partial_s^2+\Delta_x
+\mathrm{i} a(x)\partial_s-q(x)
\Bigr)V^\eta(t+\mathrm{i} s,x)
\\
&\quad
=-\sqrt{\frac{\eta}{2\pi}}\int_\mathbb{R}\mathfrak{K}^{\eta}(t+\mathrm{i} s-\gamma)
\Bigl[
\psi(\gamma)F(\gamma,x)
+\psi''(\gamma)u(\gamma,x)
\\
&\hspace{7.4cm}
+2\psi'(\gamma)\partial_\gamma u(\gamma,x)
+a(x)\psi'(\gamma)u(\gamma,x)
\Bigr]\,d\gamma.
\end{aligned}
\end{align}

Next, we derive a Carleman estimate associated with the
operator $\mathcal{P}_{a,q}:=\partial_s^2+\Delta_x+\mathrm{i}a(x)\partial_s-q(x).$

 We begin by introducing suitable weight functions. There exists a function $\ell\in C^{2}(\overline{\mathcal{O}})$ that satisfies the following properties
\begin{equation*}
 \ell>0
    \quad \text{in } \mathcal{O},
    \qquad
    |\nabla\ell|>0
    \quad \text{on } \overline{\mathcal{O}},
\end{equation*}
and
\begin{equation*}
   \ell=0,
    \qquad
    \partial_{\nu}\ell\leq 0
    \quad \text{on }
    \partial\mathcal{O}\setminus\Gamma 
\end{equation*}
where $\nu$ denotes the outward unit normal vector to
$\partial\mathcal{O}$. The existence of such a function is discussed
in \cite[Section 4.1]{Bellassoued_Fraj}.

Since $\ell$ is strictly positive in $\mathcal{O}$ and
$\overline{\mathcal{O}_{2}\setminus\mathcal{O}_{3}}$ is a compact subset
of $\mathcal{O}$, there exists a constant $\kappa>0$ such that 
$\ell(x)\geq 3\kappa$ for $ x\in\mathcal{O}_{2}\setminus\mathcal{O}_{3}$. On the other hand, since $\ell|_{\partial\mathcal{O}\setminus\Gamma}=0$, the continuity of $\ell$ implies that there exists a sufficiently small
open neighborhood $\mathcal{O}'$ of
$\partial\mathcal{O}\setminus\Gamma$ such that $\ell(x)\leq \kappa$ for $x\in\mathcal{O}^{\prime}$ and $\mathcal{O}^{\prime}\cap \overline{\mathcal{O}_{1}}=\emptyset$. Finally,  we define the weight function $\Phi(s,x):= \exp\{\varrho(\ell(x)-\alpha s^2+\alpha_{0})\}$ for $(s,x)\in \R\times\mathcal{O}$, $\varrho\geq\varrho_{0}$, $\alpha>0$, $\varrho_{0}\geq 1$ and $\alpha_{0}\geq 0$. Building on these, we  are now ready to state the Carleman estimate associated with the weight function $\Phi$ given above, in the following lemma. 
\begin{lemma}\label{lemma:Carleman_estimate}
Let $T_\star>0$ then for  $(a,q)\in \mathcal{A}$, there exist  positive constants $\varrho_{0}$, $\mu_{0}$ and $C$ such that, for every
$\varrho\geq\varrho_0$, $\mu\geq\mu_0$ and every $v\in H^{2}((-T_{\star},T_{\star})\times \mathcal{O})$ satisfying
   \begin{equation}
\begin{cases}
v(\pm T_{\star},x)=\partial_s v(\pm T_{\star},x)=0,
& x\in\mathcal{O},\\[0.3em]
v(s,x)=0,
& (s,x)\in(-T_{\star},T_{\star})\times\partial\mathcal{O},
\end{cases}
\end{equation}
the estimate
\begin{align}
    &\varrho
\int_{(-T_{\star},T_{\star})\times \mathcal{O}}
\left( |\partial_s v|^2 + |\nabla_xv|^2 + |v|^2
\right)e^{2\mu\Phi}\,dx\,ds\\
&\qquad\qquad\leq C\int_{(-T_{\star},T_{\star})\times \mathcal{O}}
\left| \mathcal P_{a,q}v\right|^2 e^{2\mu\Phi}\,dx\,ds + C\int_{(-T_\star,T_\star)\times\Gamma} \mu\varrho\Phi|\partial_\nu v|^2
e^{2\mu\Phi}\,dS_x\,ds
\end{align}
holds whenever  $\alpha_0>\alpha T_\star^2$ and  $\Phi(s,x):= \exp\{\varrho(\ell(x)-\alpha s^2+\alpha_{0})\}$ for $(s,x)\in \R\times\mathcal{O}$.
\end{lemma}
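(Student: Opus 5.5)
The estimate concerns the operator $\mathcal P_{a,q}=\partial_s^2+\Delta_x+\mathrm{i}a\partial_s-q$ on $(-T_\star,T_\star)\times\mathcal O$. Its principal part $\partial_s^2+\Delta_x$ is the Laplacian in the $(1+d)$ variables $(s,x)$, so it is elliptic. The terms $\mathrm{i}a\partial_s-q$ are of lower order. For $(a,q)\in\mathcal A$ they even vanish identically on $\mathcal O$, so on $(-T_\star,T_\star)\times\mathcal O$ we have $\mathcal P_{a,q}v=(\partial_s^2+\Delta_x)v$. Even without using this, the plan is to prove the estimate for the Laplacian $\Delta_{s,x}$ and then absorb the lower-order terms. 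We have
\[
|\mathcal P_{a,q}v|^2\ge \tfrac12|\Delta_{s,x}v|^2-C\big(|\partial_sv|^2+|v|^2\big),
\]
and for $\varrho$ large the contribution $C\int(|\partial_sv|^2+|v|^2)e^{2\mu\Phi}$ is absorbed by the left-hand side.

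For the Laplacian estimate I would follow the classical elliptic Carleman argument, as in \cite{Bellassoued_Fraj} and \cite{BellassouedStabilityWaveJMAA}, using the weight $\psi_0(s,x)=\ell(x)-\alpha s^2+\alpha_0$ and $\Phi=e^{\varrho\psi_0}$. Here $\varrho$ is the convexifying parameter and $\mu$ the large parameter.

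\textbf{Step 1: conjugation.} Set $w=e^{\mu\Phi}v$ and write $e^{\mu\Phi}\Delta_{s,x}e^{-\mu\Phi}w=P_1w+P_2w$. The symmetric part is $P_1w=\Delta w+\mu^2|\nabla\Phi|^2w$. The antisymmetric part is $P_2w=-2\mu\nabla\Phi\cdot\nabla w-\mu(\Delta\Phi)w$, where the gradient is in $(s,x)$. Then
\[
\|P_1w+P_2w\|^2\ge 2\,\mathrm{Re}(P_1w,P_2w).
\]

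\textbf{Step 2: expand the cross term.} Integrate $\mathrm{Re}(P_1w,P_2w)$ by parts. In the $s$ direction there are no boundary terms, because $v$ and $\partial_s v$ vanish at $s=\pm T_\star$. On $\partial\mathcal O$ we have $w=0$, so $\nabla w=\partial_\nu w\,\nu$, and the only boundary term is $-\mu\int\partial_\nu\Phi|\partial_\nu w|^2$ (up to a constant). Since $\partial_\nu\Phi=\varrho\Phi\partial_\nu\ell$ and $\partial_\nu\ell\le0$ on $\partial\mathcal O\setminus\Gamma$, this term has a good sign there. It is bounded by $C\mu\varrho\int_\Gamma\Phi|\partial_\nu v|^2e^{2\mu\Phi}$ on $\Gamma$, using $\partial_\nu w=e^{\mu\Phi}\partial_\nu v$ on the boundary because $v=0$ there.

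The interior terms take the standard form
\[
2\mu\!\int\! D^2\Phi(\nabla w,\nabla w)+\mu\!\int\!\Delta\Phi|\nabla w|^2\cdots+2\mu^3\!\int\! D^2\Phi(\nabla\Phi,\nabla\Phi)|w|^2+\text{l.o.t.},
\]
with $D^2\Phi=\varrho\Phi\big(\varrho\nabla\psi_0\otimes\nabla\psi_0+D^2\psi_0\big)$. Because $|\nabla\psi_0|\ge|\nabla\ell|>0$ on $\overline{\mathcal O}$, taking $\varrho$ large makes the $\varrho^2$ terms dominate. This gives the positive term
\[
\mu^3\varrho^4\int\Phi^3|\nabla\psi_0|^4|w|^2 .
\]
The gradient term needs more care, since $\nabla\psi_0\otimes\nabla\psi_0$ is not positive in all directions. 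Here I would use the standard trick of adding a multiple of $\int\mu\varrho^2\Phi\big(|\nabla w|^2-\mu^2\varrho^2\Phi^2|\nabla\psi_0|^2|w|^2\big)$, obtained by pairing $P_1w$ with $\mu\varrho^2\Phi w$. This recovers $\mu\varrho^2\int\Phi|\nabla w|^2$.

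\textbf{Step 3: return to $v$.} Going back from $w$ to $v$ and absorbing the lower-order terms for $\varrho\ge\varrho_0$ and $\mu\ge\mu_0$ gives weighted bounds on $|\nabla v|^2$ and $|v|^2$ with powers of $\mu,\varrho,\Phi\ge1$. This is stronger than the stated left side $\varrho\int(\cdots)e^{2\mu\Phi}$. The condition $\alpha_0>\alpha T_\star^2$ guarantees $\psi_0>0$ on the domain, so $\Phi>1$ and the lower bound is uniform.

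The main obstacle is Step 2: closing the positivity of the gradient term in the $s$ direction, where $D^2\psi_0$ contributes $-2\alpha$. I would handle this by choosing $\varrho$ large relative to $\alpha$ and $\|\ell\|_{C^2}$, combined with the $\Phi w$-pairing trick described there.
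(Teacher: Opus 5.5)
Your proposal is correct and follows the paper's argument. The paper does not re-derive the weighted estimate for $\partial_s^2+\Delta_x$ that your Steps 1--3 prove. It quotes that estimate, with $\sigma=\mu\varrho\Phi$ weights and boundary term $\int_{(-T_\star,T_\star)\times\Gamma}\sigma|\partial_\nu v|^2e^{2\mu\Phi}$, from \cite[Lemma 4.1]{Bellassoued_Fraj}. Then, as you do, it uses $\alpha_0>\alpha T_\star^2$ to get $\Phi\ge1$ and $\sigma\ge1$, discards the second-order term, and notes that $\mathcal P_{a,q}=\partial_s^2+\Delta_x$ on $\mathcal O$ because $(a,q)\in\mathcal A$, so no absorption of lower-order terms is needed.

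One bookkeeping fix in your sketch: Step 1 discards $\|P_1w\|^2$, so you should do the $\mu\varrho^2\Phi w$-pairing with the full conjugated operator $P_1w+P_2w$ rather than with $P_1w$ alone, or else keep $\|P_1w\|^2$ on the left. This is harmless, because $\mathrm{Re}(P_2w,\mu\varrho^2\Phi w)=\mu^2\varrho^2\int|\nabla\Phi|^2|w|^2$ is of lower order than $\mu^3\varrho^4\int\Phi^3|w|^2$.
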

\begin{proof}
Let $\mathcal{P}_0 := \partial_s^2+\Delta_x$ and $\sigma(s,x):=\mu\varrho\Phi(s,x)$. By the Carleman estimate for the operator $\mathcal{P}_0$ given in \cite[Lemma 4.1]{Bellassoued_Fraj}, there exist positive constants $\varrho_0$, $\mu_0$, and $C$ such that, for every
$\varrho\geq\varrho_0$ and $\mu\geq\mu_0$, the estimate 
\begin{align}\label{eq:strong-principal-Carleman}
&\varrho
\int_{(-T_\star,T_\star)\times\mathcal O}\Bigg(\sigma^{-1}\sum_{|\beta|=2}
|\partial^\beta v|^2+\sigma\left(|\partial_sv|^2+|\nabla_xv|^2\right)
+\sigma^3|v|^2\Bigg)e^{2\mu\Phi}\,dx\,ds\notag\\
&\qquad\leq C \int_{(-T_\star,T_\star)\times\mathcal O} |\mathcal P_0v|^2
e^{2\mu\Phi}\,dx\,ds + C\int_{(-T_\star,T_\star)\times\Gamma}\sigma|\partial_\nu v|^2
e^{2\mu\Phi}\,dS_x\,ds
\end{align}
holds.

Since $\ell>0$ in $\mathcal{O}$, continuity gives $\ell\geq 0$ on $\overline{\mathcal O}$. Hence, for $|s|\leq T_\star$, the assumption
$\alpha_0>\alpha T_\star^2$ yields $\ell(x)-\alpha s^2+\alpha_0 \geq \alpha_0-\alpha T_\star^2>0$. Consequently, $\Phi(s,x) = \exp\left( \varrho \bigl(\ell(x)-\alpha s^2+\alpha_0\bigr)\right)\geq1$. Enlarging $\varrho_0$ and $\mu_0$, we can assume that $\sigma(s,x)=\mu\varrho\Phi(s,x)\geq1$ for $(s,x)\in(-T_\star,T_\star)\times\mathcal O$. It follows that
\[
\sigma \left(|\partial_sv|^2+|\nabla_xv|^2\right)+ \sigma^3|v|^2\geq
|\partial_sv|^2+|\nabla_xv|^2+|v|^2.
\]
Dropping the nonnegative second-order term in
\eqref{eq:strong-principal-Carleman}, we obtain
\begin{align}\label{eq:weak-principal-Carleman}
&\varrho
\int_{(-T_\star,T_\star)\times\mathcal O}\left(|\partial_sv|^2+|\nabla_xv|^2+|v|^2\right)e^{2\mu\Phi}\,dx\,ds\\
&\qquad\leq C \int_{(-T_\star,T_\star)\times\mathcal O} |\mathcal P_0v|^2
e^{2\mu\Phi}\,dx\,ds + C\int_{(-T_\star,T_\star)\times\Gamma}\mu\varrho\Phi|\partial_\nu v|^2e^{2\mu\Phi}\,dS_x\,ds.
\end{align}
Since $a=q=0$ in $\mathcal{O}$, we have $\mathcal P_{a,q}v = \mathcal P_{0}v $ in $(-T_\star,T_\star)\times\mathcal O$. This completes the proof of the Lemma.
\end{proof}
Next, we will define some notations which will be useful in our subsequent analysis
\begin{equation}\label{eq:weight-levels}
    w_{\mathrm{tar}}:=e^{\varrho(2\kappa+\alpha_0)},
    \quad
    w_{\mathrm{sp}}:=e^{\varrho(\kappa+\alpha_0)},
    \quad
    w_{\mathrm{ti}}:=e^{\varrho\left(\alpha_0-\frac{\alpha T_\star^{2}}{8}\right)}
    \quad \text{ and }
    w_{\max}:=e^{\varrho\left(\max_{x\in\overline{\mathcal O}}\ell(x)+\alpha_0\right)} .
\end{equation}
Since $\ell\geq3\kappa$ on $\mathcal O_2\setminus\mathcal O_3$, we have
$\max_{x\in\overline{\mathcal O}}\ell(x)\geq3\kappa>2\kappa$. Hence, 
$w_{\mathrm{ti}}<w_{\mathrm{sp}}<w_{\mathrm{tar}}<w_{\max}$, and define
\begin{equation}\label{eq:def-a-m}
    \mathfrak a:=2\bigl(w_{\max}-w_{\mathrm{tar}}\bigr)+1>0,
    \qquad
    \mathfrak m:=2\bigl(w_{\mathrm{tar}}-w_{\mathrm{sp}}\bigr)>0
    \quad\text{ and }\quad
    \theta:=\frac{\mathfrak m}{\mathfrak a+\mathfrak m}\in(0,1).
\end{equation}
We now want to apply the Carleman estimate of Lemma
\ref{lemma:Carleman_estimate} on the domain
$(-T_\star,T_\star)\times\mathcal O$ to the function $V^{\eta}$. This function, however, depends on a complex variable, so we freeze the
real part of $z=t+\mathrm{i}s$ and set
$V_t^{\eta}(s,x):=V^{\eta}(t+\mathrm{i}s,x)$ for
$(s,x)\in(-T_\star,T_\star)\times\mathcal O$, where $t$ is a parameter,
and every constant obtained below will be independent of it. Since
$V_t^{\eta}$ vanishes neither at $s=\pm T_\star$ nor on
$\partial\mathcal O\setminus\Gamma$, so it does not satisfy the hypotheses of Lemma
\ref{lemma:Carleman_estimate}. The proof below therefore applies the
estimate to the cut-off function $\xi(s)\chi(x)V_t^{\eta}$ and
absorbs the resulting commutator terms, using the fact that the weight
$\Phi$ is strictly larger on $\mathcal O_2\setminus\mathcal O_3$ than on
the supports of the derivatives of $\xi$ and $\chi$. The precise
statement is the following.

\begin{lemma}\label{lemma:Veta-interpolation}
Let $(a,q)\in\mathcal A$ and  $u\in H^{2}(Q)$ is a solution of
\eqref{eq:wave-UCP-extended} with $F\in L^{2}(Q)$. Assume that the parameters
$\alpha,\alpha_0,T',T_0,T_{\star}$ and $T$ satisfy
\begin{equation}\label{eq:parameter-constraints}
   \max_{x\in\overline{\mathcal O}}\ell(x) \leq\frac{\alpha T_\star^{2}}{8},
    \qquad
    \alpha_0>\alpha T_\star^{2},
    \qquad
    0<T'<\min\left\{\frac{T_\star}{3},\ \sqrt{\frac{\kappa}{\alpha}},\ \frac{T_0}{8}\right\}
    \qquad \text{ and }\quad T>T_0> \frac{4T_\star}{3\sqrt\theta},
\end{equation}
with $\theta\in(0,1)$ as in \eqref{eq:def-a-m}. Then there exist constants $C>0$, $\alpha_1>0$, $\alpha_2>0$ and $\eta_0\geq1$, all independent of $t$, such that  the following estimate
\begin{equation}\label{eq:Veta-interpolation-norms}
\begin{aligned}
\|V_t^{\eta}\|_{H^{1}((-T',T')\times \left(\mathcal{O}_{2}\setminus\mathcal{O}_{3}\right))}
\leq
C\Bigl(
&e^{-\alpha_1\eta}\|u\|_{H^{1}((-T,T)\times\mathcal O)}
\\
&+
e^{\alpha_2\eta}
\bigl(
\|F\|_{L^{2}((-T,T)\times\mathcal O)}
+
\|\partial_\nu V_t^{\eta}\|_{L^{2}((-T_\star,T_\star)\times\Gamma)}
\bigr)
\Bigr).
\end{aligned}
\end{equation}
holds for every $t\in\left(-\frac{T_0}{4},\ \frac{T_0}{4}\right)$ and $\eta\geq\eta_0$.
\end{lemma}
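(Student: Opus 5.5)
We apply Lemma~\ref{lemma:Carleman_estimate} to a cut-off of $V_t^\eta$. Choose $\xi\in C_c^\infty(-T_\star,T_\star)$ with $0\le\xi\le1$, $\xi=1$ on $[-2T_\star/3,2T_\star/3]$, so that $\operatorname{supp}\xi'\subset\{2T_\star/3\le|s|\le T_\star\}$. Choose $\chi\in C^\infty(\overline{\mathcal O})$ with $\chi=1$ on $\overline{\mathcal O_1}$ and $\chi=0$ on $\mathcal O'$, so that $\nabla\chi$ is supported in $\mathcal O\setminus(\mathcal O_1\cup\mathcal O')$. Set $v:=\xi\chi V_t^\eta$. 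Then $v$ vanishes at $s=\pm T_\star$ together with $\partial_s v$, and on $\partial\mathcal O\setminus\Gamma$ because $\chi=0$ there. On $\Gamma\subset\partial\Omega$, the fact that $u=0$ on $\Sigma$ gives $V_t^\eta=0$. Hence Lemma~\ref{lemma:Carleman_estimate} applies. Since $a=q=0$ in $\mathcal O$,
\[
\mathcal P_{a,q}v=\xi\chi\,\mathcal P_{0}V_t^\eta+[\mathcal P_0,\xi\chi]V_t^\eta ,
\]
and the commutator involves only first derivatives of $V_t^\eta$ multiplied by $\xi',\xi'',\nabla\chi,\Delta\chi$. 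The first term is controlled through \eqref{eq:correct-fbi-integral}.

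We then estimate the right-hand side term by term with the weight levels \eqref{eq:weight-levels}.

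\textbf{Lower bound.} On $(-T',T')\times(\mathcal O_2\setminus\mathcal O_3)$ we have $\ell\ge3\kappa$ and $\alpha s^2\le\alpha T'^2<\kappa$. Hence $\Phi\ge w_{\mathrm{tar}}$, and the left side is at least $\varrho e^{2\mu w_{\mathrm{tar}}}\|V_t^\eta\|^2_{H^1}$.

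\textbf{Spatial commutator.} On the support of $\nabla\chi$ we are outside $\mathcal O_1$, and we want $\Phi\le w_{\mathrm{sp}}$. This requires $\ell\le\kappa$ there, so we arrange $\chi$ to cut off within the region where $\ell\le\kappa$ (adjusting $\mathcal O'$ accordingly).

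\textbf{Time commutator.} On the support of $\xi'$, $\alpha s^2\ge4\alpha T_\star^2/9$. Together with $\max\ell\le\alpha T_\star^2/8$ this gives $\Phi\le w_{\mathrm{ti}}\le w_{\mathrm{sp}}$.

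\textbf{Interior and boundary terms.} Everywhere $\Phi\le w_{\max}$. The boundary term becomes $Ce^{C\mu w_{\max}}\|\partial_\nu V_t^\eta\|^2$.

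Next we bound the FBI quantities. Restricted to $\mathcal O$, the term $\mathcal T^\eta(\psi F)$ is bounded by $e^{\eta s^2/2}\|F\|$, since $|\mathfrak K^\eta(t+\mathrm{i}s-\gamma)|=e^{-\eta((t-\gamma)^2-s^2)/2}$. The terms with $\psi'$, $\psi''$ are supported in $|\gamma|\ge T_0$. Since $|t|<T_0/4$, this gives $|t-\gamma|\ge 3T_0/4$, so they are bounded by $e^{\eta(s^2-9T_0^2/16)/2}\|u\|_{H^1}$. For $|s|\le T_\star$, the condition $T_0>4T_\star/(3\sqrt\theta)$ makes this at most $e^{-\eta c}$ times a power of $\eta$. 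The $H^1$-norms of $V_t^\eta$ appearing in the commutator terms are bounded crudely by $C\sqrt\eta\, e^{\eta T_\star^2/2}\|u\|_{H^1((-T,T)\times\mathcal O)}$. Altogether we reach
\[
e^{2\mu w_{\mathrm{tar}}}\|V_t^\eta\|^2\le C e^{2\mu w_{\mathrm{sp}}}e^{\eta T_\star^2}\|u\|^2+Ce^{2\mu w_{\max}}\bigl(e^{\eta T_\star^2}\|F\|^2+\|\partial_\nu V^\eta_t\|^2+e^{-c'\eta}\|u\|^2\bigr).
\]
Dividing by $e^{2\mu w_{\mathrm{tar}}}$ leaves a bound of the form $e^{-\mu\mathfrak m}A+e^{\mu\mathfrak a}B$, with $\mathfrak m,\mathfrak a$ as in \eqref{eq:def-a-m}; the $+1$ in $\mathfrak a$ absorbs polynomial factors.

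\textbf{Main obstacle.} The key step, and the one I expect to be hardest, is the choice of $\mu$ proportional to $\eta$, say $\mu=\lambda\eta$. This must be done so that $e^{-\lambda\eta\mathfrak m}e^{\eta T_\star^2}$ together with the $e^{-c'\eta}$ term yields a genuinely decaying factor $e^{-\alpha_1\eta}$. The competing terms must remain of the form $e^{\alpha_2\eta}$. The interpolation parameter $\theta=\mathfrak m/(\mathfrak a+\mathfrak m)$ is designed for exactly this balance. The growth $e^{\eta T_\star^2/2}$ from the complex shift must be beaten by the time-localization decay $e^{-9\eta T_0^2/32}$ after raising to the power $\theta$. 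This is what forces $\theta T_0^2\cdot\tfrac{9}{16}>T_\star^2$, i.e.\ the hypothesis $T_0>4T_\star/(3\sqrt\theta)$. If the direct choice fails, I would first bound $\|V_t^\eta\|$ by $A^{1-\theta}B^{\theta}$-type interpolation, optimizing over $\mu\ge\mu_0$, and then insert the $\eta$-dependent bounds for $A$ and $B$. All constants depend only on $\xi,\chi,\ell$ and the parameters, not on $t$, which gives uniformity over $t\in(-T_0/4,T_0/4)$.
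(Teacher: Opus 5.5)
Your proposal is correct and follows the paper's proof. It uses the same localization $\xi\chi V_t^\eta$ (which vanishes on $\partial\Omega\setminus\Gamma$ because $V_t^\eta=0$ on all of $\partial\Omega$, not because of $\chi$), the same weight levels $w_{\mathrm{tar}},w_{\mathrm{sp}},w_{\mathrm{ti}},w_{\max}$ giving the $e^{\mu\mathfrak a}$ versus $e^{-\mu\mathfrak m}$ splitting, and the same balancing condition $\tfrac{9}{16}\theta T_0^2>T_\star^2$. The paper takes your fallback route (minimize over $\mu$ to get the $\theta$-interpolation inequality, then weighted Young), but your direct choice $\mu=\lambda\eta$ works equally well, since the window $T_\star^2/\mathfrak m<\lambda<\bigl(\tfrac{9}{16}T_0^2-T_\star^2\bigr)/\mathfrak a$ is nonempty exactly when $T_0>\tfrac{4T_\star}{3\sqrt\theta}$.
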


\begin{proof}
We begin with introducing time and space dependent cut-off functions which will be required in our subsequent analysis. Let $\xi\in C_{c}^{\infty}\left(-\frac{3T_{\star}}{4},\frac{3T_{\star}}{4}\right)$ be such that $0\leq \xi\leq 1$,  $\xi(s)=1$, for every $s\in\left[-\frac{T_{\star}}{2},\frac{T_{\star}}{2}\right]$ and $\operatorname{supp}\xi' \cup \operatorname{supp}\xi'' \subset\left\{s\in\mathbb R:\frac{T_\star}{2}\leq |s|\leq\frac{3T_\star}{4} \right\}$,   where $\xi'$ and $\xi''$ stand for first and second order derivative of $\xi$ respectively.
Next, we introduce a spatial cutoff function. Let $\widetilde{\mathcal{O}}'\subset\mathcal{O}'$ be an open neighborhood of $\partial\mathcal{O}\setminus\partial\Omega$ in $\Omega$ such that $\overline{\widetilde{\mathcal{O}}'}\subset\mathcal{O}'$ and $\overline{\mathcal{O}_{2}\setminus\mathcal{O}_{3}}
    \subset    \mathcal{O}\setminus\overline{\mathcal{O}'}$. Now choose $\chi\in C^{\infty}(\overline{\mathcal{O}})$  such that
 $0\leq\chi\leq1$, and

\[
\chi(x)=
\begin{cases}
0 & x\in\widetilde{\mathcal{O}}',\\
1 & x\in\mathcal{O}\setminus\mathcal{O}'.
\end{cases}\]
Therefore, $\chi\equiv 1$, in
$\mathcal{O}_{2}\setminus\mathcal{O}_{3}$, and
$
\operatorname{supp}(\nabla\chi)
\cup
\operatorname{supp}(\Delta\chi)
\subset
\mathcal{O}'\setminus
\overline{\widetilde{\mathcal{O}}'}.$ To see it pictorially, we refer to the figure below.
\vspace{-1.5cm}
\begin{figure}[ht]
\centering
\begin{tikzpicture}[scale=1]

\def\blobA{plot [smooth cycle, tension=0.72] coordinates {
        (3.32,0.00) (3.25,1.88) (1.78,3.09) (0.00,3.04)
        (-1.45,2.51) (-2.74,1.58) (-3.18,0.00) (-2.41,-1.39)
        (-1.23,-2.13) (0.00,-2.46) (1.28,-2.22) (2.43,-1.40)
    }}
\def\blobB{plot [smooth cycle, tension=0.72] coordinates {
        (3.10,0.00) (3.03,1.75) (1.66,2.88) (0.00,2.83)
        (-1.35,2.34) (-2.56,1.48) (-2.97,0.00) (-2.25,-1.30)
        (-1.15,-1.99) (0.00,-2.30) (1.20,-2.08) (2.27,-1.31)
    }}
\def\blobC{plot [smooth cycle, tension=0.72] coordinates {
        (2.68,0.00) (2.62,1.51) (1.44,2.49) (0.00,2.45)
        (-1.17,2.02) (-2.21,1.28) (-2.57,0.00) (-1.94,-1.12)
        (-0.99,-1.72) (0.00,-1.98) (1.04,-1.79) (1.96,-1.13)
    }}
\def\blobD{plot [smooth cycle, tension=0.72] coordinates {
        (2.39,0.00) (2.34,1.35) (1.28,2.22) (0.00,2.19)
        (-1.04,1.81) (-1.98,1.14) (-2.29,0.00) (-1.73,-1.00)
        (-0.89,-1.53) (0.00,-1.77) (0.92,-1.60) (1.75,-1.01)
    }}
\def\blobE{plot [smooth cycle, tension=0.72] coordinates {
        (2.06,0.00) (2.01,1.16) (1.11,1.92) (0.00,1.88)
        (-0.90,1.56) (-1.70,0.98) (-1.97,0.00) (-1.49,-0.86)
        (-0.76,-1.32) (0.00,-1.52) (0.80,-1.38) (1.51,-0.87)
    }}
\def\blobF{plot [smooth cycle, tension=0.72] coordinates {
        (1.73,0.00) (1.69,0.98) (0.93,1.61) (0.00,1.58)
        (-0.75,1.31) (-1.43,0.82) (-1.66,0.00) (-1.25,-0.72)
        (-0.64,-1.11) (0.00,-1.28) (0.67,-1.16) (1.26,-0.73)
    }}

\fill[blue!7]    \blobA;
\fill[green!35]  \blobB;
\fill[blue!7]    \blobC;
\fill[orange!30] \blobD;
\fill[red!30]    \blobE;
\fill[white]     \blobF;
\draw[very thick]  \blobA;
\draw[dashed]      \blobD;
\draw[dashed]      \blobE;
\draw[thick,dotted]\blobF;

\begin{scope}
\clip (0,0) -- (35:4.4) arc (35:115:4.4) -- cycle;
\draw[very thick,blue] \blobA;
\end{scope}
\node at (0,0) {$\Omega_0$};
\node[font=\small] at (80:3.62) {\textcolor{blue}{$\Gamma$}};
\node[font=\small] at (172:3.58) {$\partial\Omega$};
\node[font=\small,align=center] at (268:0.82) {$\partial\mathcal{O}\setminus\partial\Omega$};
\draw[->,thin] (0,-0.98) -- (0,-1.25);
\node[right,font=\small] at (4.35,2.15) (la) {$\mathcal{O}_{1}$:  $\chi=1$, $\nabla\chi=0$};
\draw[->,thin] (la.west) -- (3.32,1.21);
\node[right,font=\small] at (4.35,1.05) (lb) {$\mathcal{O}_{2}\setminus\mathcal{O}_{3}$: $\chi=1$, $\ell\ge3\kappa$};
\draw[->,thin] (lb.west) -- (2.99,0.42);
\node[right,font=\small] at (4.35,-0.15) (lc) {$\mathcal{O}'\setminus\overline{\widetilde{\mathcal{O}}'}$: $0\le\chi\le1$};
\draw[->,thin] (lc.west) -- (1.98,-0.49);
\node[right,font=\small] at (4.35,-1.25) (ld) {$\widetilde{\mathcal{O}}'$: $\chi=0$, $\ell\le\kappa$};
\draw[->,thin] (ld.west) -- (1.54,-0.62);
\node[left,font=\small] at (-4.1,-2.1) (le) {$\mathcal{O}=\overline{\Omega}\setminus\overline{\Omega_{0}}$};
\draw[->,thin] (le.east) -- (-2.50,-1.17);
\end{tikzpicture}
\caption{}
\end{figure}

We now define $v_t^\eta(s,x) := \xi(s)\chi(x)V_t^\eta(s,x)$ for $(s,x)\in (-T_\star,T_\star)\times\mathcal O$. Using the  properties of $\xi$, we have that $v_t^\eta(\pm T_\star,x)=\partial_s v_t^\eta(\pm T_\star,x)=0,$  for $x\in \mathcal{O}$. Moreover, $\chi\equiv 0$, on $\partial\mathcal O\setminus\partial\Omega$ and  $V_t^\eta\equiv 0$ on $\partial\Omega$, yields that $v_t^\eta\equiv 0$, on $(-T_\star,T_\star)\times\partial\mathcal O$. Thus, $v_t^\eta$ satisfies all the conditions required to apply   Lemma \ref{lemma:Carleman_estimate} and  
\begin{align}\label{eq:expanded-Pv}
    \mathcal{P}_{a,q} v_t^\eta&= \xi\chi\,\mathcal{P}_{a,q}V_t^\eta+ 2(\partial_{s}\xi)\chi\,\partial_{s}V_t^\eta+(\partial_{s}^{2}\xi)\chi\, V_t^\eta\nonumber\\&\qquad\qquad+2\xi\, \nabla_x\chi\cdot\nabla_x V_t^\eta+\xi(\Delta_{x}\chi)\, V_t^\eta+\mathrm{i}a(x)(\partial_{s}\xi)\chi\, V_t^\eta .
\end{align}
We now introduce the  time and space  cutoff regions by 
\begin{equation*}
\mathcal{E}_t :=
\left(\operatorname{supp}\xi'\cup
\operatorname{supp}\xi''\right)\times\mathcal{O},\ \mbox{and}\ \mathcal{E}_x:=
\operatorname{supp}\xi \times \left( \operatorname{supp}(\nabla\chi)\cup \operatorname{supp}(\Delta\chi)\right) 
\end{equation*}
 respectively. We also denote by   
 $\mathbf{1}_{\mathcal{E}_t}$ and $\mathbf{1}_{\mathcal{E}_x}$, the  characteristic functions
of the sets $\mathcal{E}_t$ and $\mathcal{E}_x$, respectively. 
Now observe that the  terms in \eqref{eq:expanded-Pv} containing $\xi'$ or $\xi''$ are supported in $\mathcal{E}_t$, whereas the terms containing $\nabla\chi$ or $\Delta\chi$ are supported in $\mathcal{E}_x$, therefore, after using  the boundedness of $\xi$, $\chi$, their derivatives and the fact that $a\in L^\infty(\mathcal{O})$ in  \eqref{eq:expanded-Pv}, we get that  there exists a constant $C>0$ such that
\begin{equation}\label{eq:pointwise-operator-estimate}
|\mathcal{P}_{a,q}v_t^\eta|^2\leq
C\left(|\mathcal{P}_{a,q}V_t^\eta|^2+ \mathbf{1}_{\mathcal{E}_t}
\left( |\partial_sV_t^\eta|^2+|V_t^\eta|^2\right) + \mathbf{1}_{\mathcal{E}_x}
\left( |\nabla_xV_t^\eta|^2+|V_t^\eta|^2\right) \right).
\end{equation}
Applying the Carleman estimate to $v_t^\eta$ from Lemma \ref{lemma:Carleman_estimate}, we obtain
\begin{align}\label{eq:Carleman-applied-localized}
&\varrho
\int_{(-T_{\star},T_{\star})\times \mathcal{O}}
\left(
    |\partial_sv_t^\eta|^2
    +
    |\nabla_xv_t^\eta|^2
    +
    |v_t^\eta|^2
\right)
e^{2\mu\Phi}\,dx\,ds
\notag\\
&\qquad\leq
C
\int_{(-T_{\star},T_{\star})\times \mathcal{O}}
|\mathcal{P}_{a,q} v_t^\eta|^2
e^{2\mu\Phi}\,dx\,ds +
C\mu\varrho
\int_{(-T_{\star},T_{\star})\times\Gamma}
\Phi
|\partial_\nu v_t^\eta|^2
e^{2\mu\Phi}\,dS_x\,ds.
\end{align}

Since $\mathcal O'\cap\overline{\mathcal O_1}=\emptyset$ and
$\mathcal O_1$ is a neighborhood of $\partial\Omega$, we have
$\mathcal O_1\subset\mathcal O\setminus\mathcal O'$, therefore, we obtain that 
$\chi\equiv 1$, a neighborhood of $\Gamma$ which gives us $\nabla\chi=0$, in  a neighborhood of $\Gamma$.
After utilizing this, we get  that 
\[\partial_\nu v_t^\eta=\xi\left(\chi\,\partial_\nu V_t^\eta+V_t^\eta\,\partial_\nu\chi\right)=\xi\,\partial_\nu V_t^\eta,\ 
\mbox{on} \ (-T_\star,T_\star)\times\Gamma.\] After combining this identity with \eqref{eq:pointwise-operator-estimate} and \eqref{eq:Carleman-applied-localized}, and restricting the left-hand side to
$(-T',T')\times(\mathcal{O}_{2}\setminus\mathcal{O}_{3})$, where
$\chi\equiv 1$ and $\xi\equiv 1$, gives
\begin{align}\label{eq:carleman-estimate-Vt-eta}
\begin{aligned} 
    &\varrho
\int_{\left(-T^{\prime},T^{\prime}\right)\times \left(\mathcal{O}_{2}\setminus\mathcal{O}_{3}\right)}
\left(
    |\partial_sV_t^\eta|^2
    +
    |\nabla_xV_t^\eta|^2
    +
    |V_t^\eta|^2
\right)e^{2\mu\Phi}\,dx\,ds\\
&\qquad\leq
C
\int_{(-T_{\star},T_{\star})\times \mathcal{O}}
|\mathcal{P}_{a,q}V_t^\eta|^2
e^{2\mu\Phi}\,dx\,ds + C \int_{\mathcal{E}_{t}}
\left( |\partial_s V_t^\eta|^2+|V_t^\eta|^2 \right)
e^{2\mu\Phi}\,dx\,ds\\
&\qquad\quad
+
C
\int_{\mathcal{E}_{x}}
\left(
    |\nabla_x V_t^\eta|^2+|V_t^\eta|^2
\right)
e^{2\mu\Phi}\,dx\,ds +
C\mu\varrho
\int_{(-T_{\star},T_{\star})\times\Gamma}
\Phi
|\partial_\nu V_t^\eta|^2
e^{2\mu\Phi}\,dS_x\,ds.
\end{aligned}
\end{align}
Next, we will compare the weight function $e^{2\mu\Phi}$ in the domains involved in the above integral. Observe that on the domain $\left(-T^{\prime},T^{\prime}\right)\times \mathcal{O}_{2}\setminus\mathcal{O}_{3}$, we have
\begin{align*}
    e^{2\mu\exp\{\varrho(\ell(x)-\alpha s^2+\alpha_{0})\}}&\geq e^{2\mu\exp\{\varrho(3\kappa-\alpha T'^{2}+\alpha_{0})\}}\geq e^{2\mu\exp\{\varrho(2\kappa+\alpha_{0})\}}=  e^{2\mu w_{\mathrm{tar}}}.
\end{align*}
On the spatial cutoff region $\mathcal{E}_{x}$, we have $ e^{2\mu\exp\{\varrho(\ell(x)-\alpha s^2+\alpha_{0})\}}\leq e^{2\mu\exp\{\varrho(\kappa+\alpha_{0})\}}=e^{2\mu w_{\mathrm{sp}}}$ whereas on the time cutoff region $\mathcal{E}_{t}$, we have
$|s|\geq T_{\star}/2$, hence,
\begin{align*}
    e^{2\mu\exp\{\varrho(\ell(x)-\alpha s^2+\alpha_{0})\}} \leq
    e^{2\mu\exp{\varrho\left(
        \frac{\alpha T_{\star}^{2}}{8}
        -\frac{\alpha T_{\star}^{2}}{4}
        +\alpha_{0}
    \right)}}
\leq
    e^{2\mu\exp{\varrho\left(
        \alpha_{0}
        -\frac{\alpha T_{\star}^{2}}{8}
    \right)}}=e^{2\mu w_{\mathrm{ti}}}.
\end{align*}
Also observe that  $e^{2\mu\Phi}\leq e^{2\mu w_{\max}}$
in $(-T_\star,T_\star)\times\mathcal O$ and  $\kappa>0$,  implies  that $w_{\mathrm{tar}}>w_{\mathrm{sp}}$ and $w_{\mathrm{tar}}>w_{\mathrm{ti}}$.  Now, after dividing the  inequality in \eqref{eq:carleman-estimate-Vt-eta} by $e^{2\mu w_{\mathrm{tar}}}$ and   using \eqref{eq:def-a-m}, we obtain
\begin{align}\label{Eq: intermediate inequality}
\begin{aligned}
    &\int_{\left(-T^{\prime},T^{\prime}\right)\times \mathcal{O}_{2}\setminus\mathcal{O}_{3}}
\left(
    |\partial_sV_t^\eta|^2
    +
    |\nabla_xV_t^\eta|^2
    +
    |V_t^\eta|^2
\right)\,dx\,ds\\
&\qquad\leq
C e^{\mathfrak{a}\mu}\left(
\int_{(-T_{\star},T_{\star})\times \mathcal{O}}
|\mathcal{P}_{a,q}V_t^\eta|^2\,dx\,ds+
\int_{(-T_{\star},T_{\star})\times\Gamma}
|\partial_\nu V_t^\eta|^2\,dS_x\,ds\right)\\
&\qquad\quad
+
C e^{-\mathfrak{m}\mu}
\int_{(-T_{\star},T_{\star})\times \mathcal{O}}
    \left(|\partial_sV_t^\eta|^2+|\nabla_xV_t^\eta|^2+|V_t^\eta|^2\right)
\,dx\,ds,\qquad \text{ for } \mu\geq\mu_{0},
\end{aligned}
\end{align}
where $\mathfrak{a}>0$ and $\mathfrak{m}>0$  are independent of $\mu$. Next, we will minimize the right hand side with respect to $\mu$. To make further calculations easier, we define
\[
Y
:=
\|\mathcal{P}_{a,q}V_t^\eta\|_{L^2\left((-T_{\star},T_{\star})\times \mathcal{O}\right)}^{2}
+
\|\partial_{\nu}V_t^\eta\|_{L^2((-T_{\star},T_{\star})\times\Gamma)}^{2}
\]
and
\[
Z
:=
\int_{(-T_{\star},T_{\star})\times \mathcal{O}}
\left(
|\partial_sV_t^\eta|^2+|\nabla_xV_t^\eta|^2+|V_t^\eta|^2
\right)\,dx\,ds.
\]
Suppose $F(\mu):=e^{\mathfrak a\mu}Y+e^{-\mathfrak m\mu}Z$; then the critical points of $F$ are given by $\mu
=
\frac{1}{\mathfrak a+\mathfrak m}
\log\left(\frac{\mathfrak m Z}{\mathfrak a Y}\right)=:\mu_{\min}$. Also, observe that $F''(\mu_{\min})>0$; hence, $\mu_{\min}$ is the point of minima. Next we divide the proof in  two cases viz. $\mu\geq \mu_{\min}$ and $\mu<\mu_{\min}$. 
Now if $\mu_{\min}\geq\mu_0$, then
\begin{equation}
\begin{aligned}
F(\mu_{\min})
&=
e^{\mathfrak a\mu_{\min}}Y
+
e^{-\mathfrak m\mu_{\min}}Z  \\
&=
Y
\left(
\frac{\mathfrak m Z}{\mathfrak a Y}
\right)^{\frac{\mathfrak a}{\mathfrak a+\mathfrak m}}
+
Z
\left(
\frac{\mathfrak m Z}{\mathfrak a Y}
\right)^{-\frac{\mathfrak m}{\mathfrak a+\mathfrak m}}  \\
&=
\left(\frac{\mathfrak m}{\mathfrak a}\right)^{\frac{\mathfrak a}{\mathfrak a+\mathfrak m}}
Y^{\frac{\mathfrak m}{\mathfrak a+\mathfrak m}}
Z^{\frac{\mathfrak a}{\mathfrak a+\mathfrak m}}
+
\left(\frac{\mathfrak a}{\mathfrak m}\right)^{\frac{\mathfrak m}{\mathfrak a+\mathfrak m}}
Y^{\frac{\mathfrak m}{\mathfrak a+\mathfrak m}}
Z^{\frac{\mathfrak a}{\mathfrak a+\mathfrak m}}  \\
&=
\left[
\left(\frac{\mathfrak m}{\mathfrak a}\right)^{\frac{\mathfrak a}{\mathfrak a+\mathfrak m}}
+
\left(\frac{\mathfrak a}{\mathfrak m}\right)^{\frac{\mathfrak m}{\mathfrak a+\mathfrak m}}
\right]
Y^{\frac{\mathfrak m}{\mathfrak a+\mathfrak m}}
Z^{\frac{\mathfrak a}{\mathfrak a+\mathfrak m}}.
\end{aligned}    
\end{equation}
Since the quantity inside the square brackets depends only on the fixed constants
$\mathfrak a$ and $\mathfrak m$, it can be absorbed into the generic constant $C$.
Therefore $F(\mu_{\min})\leq C Y^{\frac{\mathfrak m}{\mathfrak a+\mathfrak m}}Z^{\frac{\mathfrak a}{\mathfrak a+\mathfrak m}}$. Next if $\mu_{\min}<\mu_0$, then by the definition of
$\mu_{\min}$ we have $Z<C_0Y$, where
$C_0:=\frac{\mathfrak a}{\mathfrak m}e^{(\mathfrak a+\mathfrak m)\mu_0}$. Now
since $(-T',T')\times(\mathcal O_2\setminus\mathcal O_3)\subset(-T_\star,T_\star)\times\mathcal O$,
therefore the left hand side of the inequality \eqref{Eq: intermediate inequality}, is bounded by $Z$, and
hence
\[
    Z=Z^{\frac{\mathfrak m}{\mathfrak a+\mathfrak m}}Z^{\frac{\mathfrak a}{\mathfrak a+\mathfrak m}}
    \leq
    C_0^{\frac{\mathfrak m}{\mathfrak a+\mathfrak m}}
    Y^{\frac{\mathfrak m}{\mathfrak a+\mathfrak m}}Z^{\frac{\mathfrak a}{\mathfrak a+\mathfrak m}} .
\]
Thus, in both the cases, we get 
\begin{align}
\begin{aligned}
    &\int_{\left(-T^{\prime},T^{\prime}\right)\times \left(\mathcal{O}_{2}\setminus\mathcal{O}_{3}\right)}
\left(
    |\partial_s V_t^\eta|^2
    +
    |\nabla_x V_t^\eta|^2
    +
    |V_t^\eta|^2
\right)\,dx\,ds\\
&\qquad\leq
C \left(
\int_{(-T_{\star},T_{\star})\times \mathcal{O}}
|\mathcal{P}_{a,q} V_t^\eta|^2\,dx\,ds+
\int_{(-T_{\star},T_{\star})\times\Gamma}
|\partial_\nu V_t^\eta|^2\,dS_x\,ds\right)^{\frac{\mathfrak{m}}{\mathfrak{a}+\mathfrak{m}}}\\
&\qquad\qquad\times\left(\int_{(-T_{\star},T_{\star})\times \mathcal{O}}
\left(
|\partial_sV_t^\eta|^2+|\nabla_xV_t^\eta|^2+|V_t^\eta|^2
\right)\,dx\,ds\right)^{\frac{\mathfrak{a}}{\mathfrak{a}+\mathfrak{m}}}.
\end{aligned} 
\end{align}
Taking the square root above, combined with $\sqrt{A^2+B^2}\leq A+B$ for $A,B\geq0$, we conclude
\begin{align}\label{eq:interpolation-Veta}
&\|V_t^\eta\|_{H^1((-T',T')\times(\mathcal O_2\setminus\mathcal O_3))}
\notag\\
&\qquad\leq
C
\left(
\|\mathcal P_{a,q}V_t^\eta\|_{L^2((-T_\star,T_\star)\times\mathcal O)}
+
\|\partial_\nu V_t^\eta\|_{L^2((-T_\star,T_\star)\times\Gamma)}
\right)^\theta
\times
\|V_t^\eta\|_{H^1((-T_\star,T_\star)\times\mathcal O)}^{1-\theta},
\end{align}
where $\theta=\frac{\mathfrak m}{\mathfrak a+\mathfrak m}\in(0,1)$ is as in \eqref{eq:def-a-m}. We can rewrite \eqref{eq:correct-fbi-integral} as follows
 $\mathcal P_{a,q}V_t^\eta=-F_{t}^\eta+R_{t}^\eta$, where $F_{t}^\eta$ is the FBI transform of $\psi F$, and $R_{t}^\eta$ is defined as follows
\begin{equation}
\label{eq:R-eta-definition}
\begin{aligned}
R_{t}^\eta(s,x)
&:=-\mathcal{T}^{\eta}\!\left(\psi''u\right)(t+\mathrm{i}s,x)
-2\mathcal{T}^{\eta}\!\left(\psi'\partial_\gamma u\right)(t+\mathrm{i}s,x)
-a(x)\mathcal{T}^{\eta}\!\left(\psi'u\right)(t+\mathrm{i}s,x) \\
&=
-\mathcal{T}^{\eta}\!\left(
\psi''u
+2\psi'\partial_\gamma u
+a(x)\psi'u
\right)(t+\mathrm{i}s,x).
\end{aligned}
\end{equation}
Let $h:= \psi''u
+2\psi'\partial_\gamma u
+a(x)\psi'u$.  Now since \(\psi\equiv 1\) on \([-T_{0},T_{0}]\), we have
\[
\operatorname{supp}\psi'\cup\operatorname{supp}\psi''
\subseteq\{\gamma: T_{0}\leq |\gamma|\leq T  \}.
\]
Now for  $|t|<\frac{T_0}{4}$ and $\gamma\in \operatorname{supp}\psi'\cup\operatorname{supp}\psi''$, we have
$|t-\gamma|\geq|\gamma|-|t|\geq\frac{3T_0}{4}$; hence, for $|s|\leq T_\star$
\[
\left|\mathfrak K^\eta(t+\mathrm{i}s-\gamma)\right|
=
\exp\left(
-\frac{\eta}{2}\bigl((t-\gamma)^2-s^2\bigr)
\right)\leq \exp\left(
-\frac{\eta}{2}\left(\left(\frac{3T_{0}}{4}\right)^2-T_{\star}^{2}\right)
\right)\leq e^{-c_0\eta},
\]
where $ c_0
    :=
    \frac12
    \left[
    \left(\frac{3T_0}{4}\right)^2-T_{\star}^{2}
    \right]$, which is positive because
$T_0>\frac{4T_\star}{3\sqrt\theta}>\frac{4T_\star}{3}$.
Using the definition of $R_{t}^\eta$ from \eqref{eq:R-eta-definition}, we obtain
\[
\begin{aligned}
\left|R_{t}^\eta(s,x)\right|
&=
\left|
\sqrt{\frac{\eta}{2\pi}}
\int_{\mathbb R}
\mathfrak K^\eta(t+\mathrm{i}s-\gamma)
h(\gamma,x)\,d\gamma
\right| \\
&\leq
\sqrt{\frac{\eta}{2\pi}}\,
e^{-c_0\eta}
\int_{-T}^{T}|h(\gamma,x)|\,d\gamma.
\end{aligned}
\]
After utilizing the Cauchy-Schwarz inequality, we have
\[
    \int_{-T}^{T}|h(\gamma,x)|\,d\gamma
    \leq
    (2T)^{1/2}
    \|h(\cdot,x)\|_{L^2(-T,T)}.
\]
Thus
\[
    \left|R_{t}^\eta(s,x)\right|
    \leq
    C\sqrt{\eta}\,e^{-c_0\eta}
    \|h(\cdot,x)\|_{L^2(-T,T)}\text{ for } (s,x)\in(-T_\star,T_\star)\times\mathcal O.  
\]
Squaring and taking the $L^2$-norm with respect to $(s,x)\in(-T_\star,T_\star)\times\mathcal O$, we get
\[
\|R_t^\eta\|_{L^2((-T_\star,T_\star)\times\mathcal O)}^{2}
\leq
C\,\eta\,e^{-2c_0\eta}
\|h\|_{L^2((-T,T)\times\mathcal O)}^{2}.
\]
Since \(\psi'\), \(\psi''\), and \(a\) are bounded, we have
\[
\begin{aligned}
\|h\|_{L^2((-T,T)\times\mathcal O)}
&\leq
C\left(
\|u\|_{L^2((-T,T)\times\mathcal O)}
+
\|\partial_\gamma u\|_{L^2((-T,T)\times\mathcal O)}
\right) \leq
C\|u\|_{H^1((-T,T)\times\mathcal O)}.
\end{aligned}
\]
Consequently,
\[
\|R_t^\eta\|_{L^2((-T_\star,T_\star)\times\mathcal O)}^{2}
\leq C\,\eta\,e^{-2c_0\eta}\|u\|^2_{H^1((-T,T)\times\mathcal O)}.
\]
Taking the square root, the factor \(\sqrt{\eta}\) can be absorbed
into the exponential. Indeed, for any \(0<c_R<c_0\), there exists \(C>0\)
such that $\sqrt{\eta}\,e^{-c_0\eta}\leq C e^{-c_R\eta}.$ Therefore,
\[
\|R_t^\eta\|_{L^2((-T_\star,T_\star)\times\mathcal O)}
\leq
C e^{-c_R\eta}
\|u\|_{H^1((-T,T)\times\mathcal O)}.
\]
Since $F_{t}^\eta$ is the FBI transform of $\psi F$ combined with the fact that \(\psi\in C_c^\infty(-T,T)\), we have
\begin{align*}
\begin{aligned} 
   F_{t}^\eta(s,x)
&=
\sqrt{\frac{\eta}{2\pi}}
\int_{\mathbb R}
\mathfrak K^\eta(t+\mathrm{i}s-\gamma)
\psi(\gamma)F(\gamma,x)\,d\gamma \\
& =
\sqrt{\frac{\eta}{2\pi}}
\int_{-T}^{T}
\mathfrak K^\eta(t+\mathrm{i}s-\gamma)
\psi(\gamma)F(\gamma,x)\,d\gamma.
\end{aligned}
\end{align*}
Therefore,
\begin{align*}
|F_{t}^\eta(s,x)|
&\leq
\sqrt{\frac{\eta}{2\pi}}
\int_{-T}^{T}
\left|
\mathfrak K^\eta(t+\mathrm{i}s-\gamma)
\psi(\gamma)F(\gamma,x)
\right|\,d\gamma .
\end{align*}
Using the explicit form of \(\mathfrak K^\eta\), we get
\[
\left|\mathfrak K^\eta(t+\mathrm{i}s-\gamma)\right|
=
\exp\left(
-\frac{\eta}{2}(t-\gamma)^2+\frac{\eta}{2}s^2
\right)
\leq
\exp\left(\frac{\eta}{2}T_\star^2\right),
\]
where we used \((t-\gamma)^2\geq 0\) and \(|s|\leq T_\star\). Hence,
\begin{align*}
|F_{t}^\eta(s,x)|
&\leq
\sqrt{\frac{\eta}{2\pi}}
\exp\left(\frac{\eta}{2}T_\star^2\right)
\int_{-T}^{T}
|\psi(\gamma)F(\gamma,x)|\,d\gamma .
\end{align*}
Using  the Cauchy-Schwarz inequality, we get
\begin{align*}
    | F_{t}^\eta(s,x)|&\leq \sqrt{\frac{\eta}{2\pi}}\exp\left(\frac{\eta}{2}T_\star^2\right)\left(
\int_{-T}^{T}|\psi(\gamma)|^2\,d\gamma
\right)^{1/2}
\left(
\int_{-T}^{T}|F(\gamma,x)|^2\,d\gamma
\right)^{1/2}\\
&\leq
C\sqrt{\eta}\,
\exp\left(\frac{\eta}{2}T_\star^2\right)
\left(
\int_{-T}^{T}|F(\gamma,x)|^2\,d\gamma
\right)^{1/2}.
\end{align*}
Squaring and taking the \(L^2\)-norm with respect to
\((s,x)\in(-T_\star,T_\star)\times\mathcal O\), we get
\begin{align*}
    \|F_t^\eta\|_{L^2((-T_\star,T_\star)\times\mathcal O)}^{2}
&\leq
C\eta e^{T_\star^2\eta}
\int_{-T_\star}^{T_\star}
\int_{\mathcal O}\int_{-T}^{T}|F(\gamma,x)|^2\,d\gamma\,dx\,ds\\
&= 2T_{\star}C\eta e^{T_\star^2\eta}\|F\|^2_{L^2((-T,T)\times\mathcal O)} .
\end{align*}
Taking square roots and choosing any \(c_F>\frac{T_\star^2}{2}\), so that
\(\sqrt{2T_{\star}\eta}\,e^{\frac{T_\star^2}{2}\eta}\leq C e^{c_F\eta}\) for all
\(\eta\geq 1\), we conclude
\begin{equation}\label{eq:F-bound}
\|F_t^\eta\|_{L^2((-T_\star,T_\star)\times\mathcal O)}
\leq
C e^{c_F\eta}
\|F\|_{L^2((-T,T)\times\mathcal O)}.
\end{equation}
Therefore
\begin{equation}\label{eq:PVeta-bound}
\|\mathcal P_{a,q}V_{t}^\eta\|_{L^2((-T_\star,T_\star)\times\mathcal O)}
\leq
C e^{c_F\eta}\|F\|_{L^2((-T,T)\times\mathcal O)}
+
C e^{-c_R\eta}\|u\|_{H^1((-T,T)\times\mathcal O)}.
\end{equation}
Similarly, using $\partial_sV^\eta=\mathrm{i}\mathcal T^\eta(\psi'u)+\mathrm{i}\mathcal T^\eta\left(\psi\partial_\gamma u\right)$ and $\nabla_xV^\eta=\mathcal T^\eta(\psi\nabla_xu)$, we obtain
\begin{equation}\label{eq:Veta-global-bound}
\|V_{t}^\eta\|_{H^1((-T_\star,T_\star)\times\mathcal O)}
\leq
C e^{c_V\eta}\|u\|_{H^1((-T,T)\times\mathcal O)},
\end{equation}
for any   $c_{V}>T_\star^2/2$. 
Substituting \eqref{eq:PVeta-bound},  \eqref{eq:Veta-global-bound} into
\eqref{eq:interpolation-Veta}, and using
$(A+B)^\theta\leq A^\theta+B^\theta$, we obtain
\begin{align}\label{eq:interpolation-after-FBI-bounds}
\begin{aligned} 
&\|V_{t}^\eta\|_{H^1((-T',T')\times(\mathcal O_2\setminus\mathcal O_3))}
\notag\\
&\qquad\leq
C
\left(
e^{-\left(
\theta c_R-(1-\theta)c_V
\right)\eta}\|u\|_{H^1((-T,T)\times\mathcal O)}+
\left(
e^{c_F\eta}\mathcal D_t^\eta
\right)^\theta
\left(
e^{c_V\eta}
\|u\|_{H^1((-T,T)\times\mathcal O)}
\right)^{1-\theta}\right),
\end{aligned}
\end{align}
where $\mathcal D_t^\eta:= \|F\|_{L^2((-T,T)\times\mathcal O)}+ \left\|\partial_\nu V_t^\eta\right\|_{L^2((-T_\star,T_\star)\times\Gamma)} $. Further, we choose $\theta c_R>(1-\theta)c_V$, which can be achieved by the hypothesis of the Lemma, i.e, $T>T_{0}>\frac{4T_\star}{3\sqrt{\theta}}$. Finally, fix $\delta>\frac{c_V}{\theta}$ and apply the weighted Young
inequality
\[
    A^{\theta}B^{1-\theta}
    \leq
    \theta\lambda A+(1-\theta)\lambda^{-\frac{\theta}{1-\theta}}B,
    \qquad \text{ for } A,B\geq0\ \text{ and } \lambda>0,
\]
with $A=e^{c_F\eta}\mathcal D_t^{\eta}$,
$B=e^{c_V\eta}\|u\|_{H^{1}((-T,T)\times\mathcal O)}$ and
$\lambda=e^{\delta(1-\theta)\eta}$. Since
$\lambda^{-\frac{\theta}{1-\theta}}=e^{-\delta\theta\eta}$, this gives
\begin{equation}\label{eq:weighted-Young-FBI}
\begin{aligned}
&\bigl(e^{c_F\eta}\mathcal D_t^{\eta}\bigr)^{\theta}
\bigl(e^{c_V\eta}\|u\|_{H^{1}((-T,T)\times\mathcal O)}\bigr)^{1-\theta}
\\
&\qquad\leq
Ce^{\left(c_F+\delta(1-\theta)\right)\eta}\mathcal D_t^{\eta}
+
Ce^{-\left(\delta\theta-c_V\right)\eta}
\|u\|_{H^{1}((-T,T)\times\mathcal O)} .
\end{aligned}
\end{equation}
Define $\alpha_1:= \min\left\{\theta c_R-(1-\theta)c_V,\, \delta\theta-c_V\right\} >0$ and $\alpha_2:= c_F+\delta(1-\theta)>0$. Hence
\[
\|V_{t}^\eta\|_{H^1((-T',T')\times(\mathcal O_2\setminus\mathcal O_3))}
\leq
C\left(
e^{-\alpha_1\eta}\|u\|_{H^1((-T,T)\times\mathcal O)}
+
e^{\alpha_2\eta}
\left(
\|F\|_{L^2((-T,T)\times\mathcal O)}
+
\|\partial_\nu V_{t}^\eta\|_{L^2((-T_\star,T_\star)\times\Gamma)}
\right)
\right).
\]
This proves the desired estimate which completes the proof of Lemma.
\end{proof}
Next, in  the following Lemma, we  prove an estimate that involves the projection of  $V^{\eta}$ to the real-axis. For notational convenience, we denote the projection of $V^{\eta}$ to the real axis by $V^{\eta}_{0}$. More precisely, 
\begin{equation*}
    V^{\eta}_{0}(t,x):= V^{\eta}(t+\mathrm{i}0,x)= \sqrt{\frac{\eta}{2\pi}} \int_{\mathbb{R}} \mathfrak{K}^{\eta}(t-\gamma)\psi(\gamma)u(\gamma,x)\,d\gamma= \sqrt{\frac{\eta}{2\pi}}\left(\mathfrak{K}^{\eta}\ast_{t}(\psi u)\right)(t,x),
\end{equation*}
where $\ast_{t}$ represents the convolution in the time-variable.

\begin{lemma}\label{Lemma:Projection_V0} For $(a,q)\in\mathcal A$ and $F\in L^{2}(Q)$, let $u\in H^{2}(Q)$ be a solution of
\eqref{eq:wave-UCP-extended}   and  $T'\in \left(0, \min\left\{ \frac{T_\star}{3},\ \sqrt{\frac{\kappa}{\alpha}},\ \frac{T_0}{8} \right\}\right)$, $\alpha_1,\alpha_2>0$ and $\eta_0\geq1$ be  constants as in  Lemma
\ref{lemma:Veta-interpolation}. Also denote by $ \beta:=\alpha_2+\frac{T_\star^{2}}{2}$, then there exists $C>0$ such that the following estimate 
\begin{equation}\label{eq:projection-estimate}
\begin{aligned}
\|V_0^{\eta}\|_{H^{1}((-T',T')\times \left(\mathcal O_2\setminus\mathcal O_3)\right)}
\leq
C\Bigl(
e^{-\alpha_1\eta}\|u\|_{H^{1}((-T,T)\times\mathcal O)}
+
e^{\beta\eta}
\bigl(
\|F\|_{L^{2}((-T,T)\times\mathcal O)}
+
\|\partial_\nu u\|_{L^{2}((-T,T)\times\Gamma)}
\bigr)
\Bigr)
\end{aligned}
\end{equation}
for every $\eta\geq\eta_0$. 
\begin{proof}
Choose $r>0$ sufficiently small such that $0<r<\min\{T',\,T_\star-T'\}$. Then
\begin{equation}\label{choices-r}
(-r,r)\subset(-T',T'),\quad T'+r<2T'<\frac{T_0}{4}
    \quad\text{and}\quad
    (-T'-r,T'+r)\subset(-T_\star,T_\star).
\end{equation}
Since the function $V^{\eta}(\cdot,x) $ defined in \eqref{definition of V eta} is entire for every fixed $x\in\mathcal{O}$, the function $z\longmapsto |V^\eta(z,x)|^2$ is subharmonic. Therefore, using the mean value property for subharmonic functions, we arrive at 
\[
|V^\eta(\tau,x)|^2
\leq
\frac{C}{r^2}
\int_{D(\tau,r)}
|V^\eta(z,x)|^2\,\,ds\,dt,\ \text{ for }(\tau,x)\in (-T',T')\times \left(\mathcal O_2\setminus\mathcal O_3\right) ,
\]
where $D(\tau,r)\subset\mathbb C$ is an open disk of radius $r$ with center at $\tau\in (-T',T')$. Now since $D(\tau,r)\subset\left\{t+\mathrm{i}s:\ |t-\tau|<r,\ |s|<r\right\}$, therefore we get
\begin{equation}
|V^\eta(\tau,x)|^2
\leq
\frac{C}{r^2}
\int_{\tau-r}^{\tau+r}
\int_{-r}^{r}
|V^\eta(t+\mathrm{i}s,x)|^2\,ds\,dt .
\end{equation}
Integrating over $(\tau,x)\in (-T',T')\times \left(\mathcal O_2\setminus\mathcal O_3\right)$ and using the Fubini's theorem, we obtain
\begin{align}
\label{eq:L2-real-axis-expanded}
\begin{aligned} 
\int_{-T'}^{T'}\int_{\mathcal O_2\setminus\mathcal O_3} |V_0^\eta(\tau,x)|^2\,dx\,d\tau
&\leq
\frac{C}{r^2}
\int_{-T'}^{T'}\int_{\mathcal O_2\setminus\mathcal O_3}
\int_{\tau-r}^{\tau+r}
\int_{-r}^{r}
|V^\eta(t+\mathrm{i}s,x)|^2
\,ds\,dt\,dx\,d\tau\\
&\leq
\frac{C}{r^2}
\int_{\mathcal O_2\setminus\mathcal O_3}
\int_{-T'-r}^{T'+r}
\int_{-r}^{r}
|V^\eta(t+\mathrm{i}s,x)|^2
\left(
\int_{-T'}^{T'} \mathbf{1}_{\{|t-\tau|<r\}}\,d\tau
\right)
ds\,dt\,dx
\\
&\leq
\frac{C}{r^2}
\int_{\mathcal O_2\setminus\mathcal O_3}
\int_{-T'-r}^{T'+r}
\int_{-r}^{r}
|V^\eta(t+\mathrm{i}s,x)|^2
(2r)
\,ds\,dt\,dx
\\
&\leq
C\int_{-T'-r}^{T'+r}
\int_{-r}^{r}
\int_{\mathcal O_2\setminus\mathcal O_3}
|V^\eta(t+\mathrm{i}s,x)|^2\,dx\,ds\,dt,
\end{aligned} 
\end{align}
where in the last step the constant $C$ has absorbed the
factor $2/r$ as $r$ is fixed.
Next by utilizing  \eqref{choices-r}, we obtain that  $|t|<T_0/4$, for every
$t\in(-T'-r,T'+r)$, so Lemma \ref{lemma:Veta-interpolation}, gives
\begin{align}
\begin{aligned} 
&\int_{-r}^{r}
\int_{\mathcal O_2\setminus\mathcal O_3}
|V^\eta(t+\mathrm{i}s,x)|^2\,dx\,ds
\\
&\qquad\leq
C\left[
e^{-\alpha_1\eta}\|u\|_{H^1((-T,T)\times\mathcal O)}
+
e^{\alpha_2\eta}
\left(
\|F\|_{L^2((-T,T)\times\mathcal O)}
+
\|\partial_\nu V_t^\eta\|_{L^2((-T_{\star},T_{\star})\times\Gamma)}
\right)
\right]^2 ,
\end{aligned} 
\end{align}
for $\eta\geq \eta_0$. Using the elementary inequality $(a+b+c)^2\leq 3(a^2+b^2+c^2)$, we get
\begin{align}\label{eq:Veta-L2-O2-O3-time-integrated}
\begin{aligned}
&\int_{-T'-r}^{T'+r}
\int_{-r}^{r}
\int_{\mathcal O_2\setminus\mathcal O_3}
|V^\eta(t+\mathrm{i}s,x)|^2\,dx\,ds\,dt
\\
&\quad\leq
C\int_{-T'-r}^{T'+r}
\left(
e^{-2\alpha_1\eta}\|u\|_{H^1((-T,T)\times\mathcal O)}^2
+
e^{2\alpha_2\eta}\|F\|_{L^2((-T,T)\times\mathcal O)}^2
+
e^{2\alpha_2\eta}
\|\partial_\nu V_t^\eta\|_{L^2((-T_{\star},T_{\star})\times\Gamma)}^2
\right)dt. 
\end{aligned}
\end{align}
Since the first two terms do not depend on $t$, the integration in $t$
only gives the finite factor $2(T'+r)$, which is absorbed in 
constant $C$. Hence
\begin{align}
&\int_{-T'-r}^{T'+r}
\int_{-r}^{r}
\int_{\mathcal O_2\setminus\mathcal O_3}
|V^\eta(t+\mathrm{i}s,x)|^2\,dx\,ds\,dt
\notag\\
&\qquad\leq
C\left(
e^{-2\alpha_1\eta}\|u\|_{H^1((-T,T)\times\mathcal O)}^2
+
e^{2\alpha_2\eta}\|F\|_{L^2((-T,T)\times\mathcal O)}^2
\right)
\notag\\
&\qquad\quad
+
C e^{2\alpha_2\eta}
\int_{-T'-r}^{T'+r}
\int_{-T_{\star}}^{T_{\star}}
\int_{\Gamma}
|\partial_\nu V^\eta(t+\mathrm{i}s,y)|^2\,dS_y\,ds\,dt .
\end{align}
We now estimate the last term. Since the normal derivative acts only on
the spatial variable,
\[
\partial_\nu V^\eta(t+\mathrm{i}s,y)
=
\sqrt{\frac{\eta}{2\pi}}
\int_{\mathbb R}
\mathfrak K^\eta(t+\mathrm{i}s-\gamma)
\psi(\gamma)\partial_\nu u(\gamma,y)\,d\gamma .
\]
Taking the absolute value gives
\begin{align*}
  \left|\partial_\nu V^\eta(t+\mathrm{i}s,y)\right|
&\leq
e^{\frac{\eta s^2}{2}}
\sqrt{\frac{\eta}{2\pi}}
\int_{\mathbb R}
e^{-\frac{\eta}{2}(t-\gamma)^2}
\left|\psi(\gamma)\partial_\nu u(\gamma,y)\right|\,d\gamma = e^{\frac{\eta s^2}{2}}\sqrt{\frac{\eta}{2\pi}}
\bigl(\mathfrak{K}^{\eta} *_t |\psi\partial_\nu u| \bigr)(t,y),
\end{align*}
where the convolution is taken only in the time variable, and we have used that $|\mathfrak K^\eta(t+\mathrm i s-\gamma)|=e^{\eta s^2/2}e^{-\eta(t-\gamma)^2/2}$.
Now taking the \(L^2\)-norm over
\((-T'-r,T'+r)\times\Gamma\), and then enlarging the time interval to
the whole real line, we obtain
\[
\begin{aligned}
&\|\partial_\nu V^\eta(\cdot+\mathrm{i}s,\cdot)\|_
{L^2((-T'-r,T'+r)\times\Gamma)}
\leq
e^{\frac{\eta s^2}{2}}\sqrt{\frac{\eta}{2\pi}}
\left\|\mathfrak{K}^{\eta} * |\psi\partial_\nu u|\right\|_
{L^2(\mathbb R\times\Gamma)} .
\end{aligned}
\]
Next using the Young's convolution inequality in conjugation with $0\leq\psi\leq1$ and $\operatorname{supp}\psi\subset(-T,T)$, we get
\[
\|\partial_\nu V^\eta(\cdot+\mathrm{i}s,\cdot)\|_
{L^2((-T'-r,T'+r)\times\Gamma)}
\leq
C e^{\frac{\eta s^2}{2}}
\|\partial_\nu u\|_{L^2((-T,T)\times\Gamma)} ,
\]
where we have used $\|\mathfrak{K}^{\eta}\|_{L^1(\mathbb R)}=\sqrt{\frac{2\pi}{\eta}}.$
Finally, using the Fubini's theorem and enlarging the domain of  $t$-integration from $(-T'-r,T'+r)$ to
$\mathbb R$, we obtain
\begin{align}\label{eq:Bt-integrated}
\begin{aligned} 
&\int_{-T'-r}^{T'+r}
\int_{-T_{\star}}^{T_{\star}}
\int_{\Gamma}
|\partial_\nu V^\eta(t+\mathrm{i}s,y)|^2\,dS_y\,ds\,dt
\leq
\int_{\mathbb{R}}
\int_{-T_\star}^{T_\star}
\int_\Gamma
|\partial_\nu V^\eta(t+\mathrm{i}s,y)|^2
\,dS_y\,ds\,dt
\\
& \qquad \leq
\int_{-T_\star}^{T_\star}
\|\partial_\nu V^\eta(\cdot+\mathrm{i}s,\cdot)\|_
{L^2(\mathbb R\times\Gamma)}^2\,ds
\leq
\left(
\int_{-T_\star}^{T_\star}e^{\eta s^2}\,ds
\right)
\|\partial_\nu u\|_{L^2((-T,T)\times\Gamma)}^2
\\
& \qquad \leq
2T_\star e^{\eta T_\star^2}
\|\partial_\nu u\|_{L^2((-T,T)\times\Gamma)}^2.
\end{aligned}
\end{align}
After substituting the above estimates in \eqref{eq:Veta-L2-O2-O3-time-integrated}, and using $e^{2\alpha_2\eta}e^{\eta T_\star^2}=e^{2\beta\eta}$ together with $\beta\geq\alpha_2$, we get
\begin{align}
&\int_{-T'-r}^{T'+r}
\int_{-r}^{r}
\int_{\mathcal O_2\setminus\mathcal O_3}
|V^\eta(t+\mathrm{i}s,x)|^2\,dx\,ds\,dt
\notag\\
&\qquad\leq
C\left(
e^{-2\alpha_1\eta}\|u\|_{H^1((-T,T)\times\mathcal O)}^2
+
e^{2\beta\eta}\left(\|F\|_{L^2((-T,T)\times\mathcal O)}^2+\|\partial_\nu u\|_{L^2((-T,T)\times\Gamma)}^2\right)\right).
\end{align}
Finally, using the above estimate in \eqref{eq:L2-real-axis-expanded}, we get 
\begin{align}
&\|V_0^\eta\|_{L^2((-T',T')\times {\left(\mathcal O_2\setminus\mathcal O_3\right)})}^2\leq
    C\int_{-T'-r}^{T'+r}\int_{-r}^{r}\int_{\mathcal O_2\setminus\mathcal O_3}|V^\eta(t+\mathrm{i}s,x)|^2\,dx\,ds\,dt
\notag\\
&\qquad\qquad\leq
C\left(
e^{-2\alpha_1\eta}\|u\|_{H^1((-T,T)\times\mathcal O)}^2
+
e^{2\beta\eta}\left(\|F\|_{L^2((-T,T)\times\mathcal O)}^2
+
\|\partial_\nu u\|_{L^2((-T,T)\times\Gamma)}^2\right)
\right).
\end{align}
Now since $V^\eta$ is holomorphic in $z=t+\mathrm{i}s$, therefore the Cauchy Riemann equations  give  $$|\partial_t V^\eta(t+\mathrm{i}s,x)|=|\partial_s V^\eta(t+\mathrm{i}s,x)|.$$
Next repeating the similar argument related to use of mean value inequality to the holomorphic function
$\partial_tV^\eta(\cdot,x)$, we obtain
\[
\|\partial_t V_0^\eta\|_{L^2((-T',T')\times {\mathcal O_2\setminus\mathcal O_3})}^2
\leq
C
\int_{-T'-r}^{T'+r}
\int_{-r}^{r}
\int_{\mathcal O_2\setminus\mathcal O_3}
|\partial_s V^\eta(t+\mathrm{i}s,x)|^2
\,dx\,ds\,dt .
\]
Since the left-hand side of the estimate of Lemma
\ref{lemma:Veta-interpolation} also controls
$\int|\partial_sV_t^\eta|^2$, the previous analysis of this Lemma yields
\[
\|\partial_t V_0^\eta\|_{L^2((-T',T')\times {\left(\mathcal O_2\setminus\mathcal O_3\right)})}^2
\leq
C\left(
e^{-2\alpha_1\eta}\|u\|_{H^1((-T,T)\times\mathcal O)}^2
+
e^{2\beta\eta}\left(\|F\|_{L^2((-T,T)\times\mathcal O)}^2
+
\|\partial_\nu u\|_{L^2((-T,T)\times\Gamma)}^2\right)
\right)
\]
Similarly, for each spatial derivative, the function $z\longmapsto \partial_{x_j}V^\eta(z,x)$ is also entire. Hence, after repeating the previous arguments to 
$\partial_{x_j}V^\eta$, and summing over $j=1,\dots,d$, we get 
\begin{align*}
\|\nabla_x V_0^\eta\|_{L^2((-T',T')\times {\left(\mathcal O_2\setminus\mathcal O_3\right)})}^2
&\leq
C
\int_{-T'-r}^{T'+r}
\int_{-r}^{r}
\int_{\mathcal O_2\setminus\mathcal O_3}
|\nabla_x V^\eta(t+\mathrm{i}s,x)|^2
\,dx\,ds\,dt\\
&\leq C\left(
e^{-2\alpha_1\eta}\|u\|_{H^1((-T,T)\times\mathcal O)}^2
+
e^{2\beta\eta}\left(\|F\|_{L^2((-T,T)\times\mathcal O)}^2
+
\|\partial_\nu u\|_{L^2((-T,T)\times\Gamma)}^2\right)
\right)
.
\end{align*}
Now combining the previously derived  $L^2$ estimate of $V^{\eta}_{0}$ along with the estimates for its  time and spatial derivatives,  we get that 
\[
\|V_0^\eta\|_{H^1((-T',T')\times {\left(\mathcal O_2\setminus\mathcal O_3\right)})}^2
\leq
C\left(
e^{-2\alpha_1\eta}\|u\|_{H^1((-T,T)\times\mathcal O)}^2
+
e^{2\beta\eta}\left(\|F\|_{L^2((-T,T)\times\mathcal O)}^2
+
\|\partial_\nu u\|_{L^2((-T,T)\times\Gamma)}^2\right)
\right)
\]
Taking square roots and using $\sqrt{A+B}\leq\sqrt A+\sqrt B$ twice
yields \eqref{eq:projection-estimate}, which is what we wanted to prove. This completes the proof of Lemma. 
\end{proof}
\end{lemma}
Note that so far we have estimated the FBI transform of $\psi u$ and not the solution $u$ itself. Since $V_{0}^{\eta}$ is the convolution of $\psi u$ with a Gaussian kernel in the time variable, we estimate the solution $u$ by taking the Fourier transform in time variable. The precise statement is given by the following lemma, with the Fourier transform convention $\displaystyle \widehat g(\xi):=\int_{\mathbb R}e^{-\mathrm{i}t\xi}g(t)\,dt$, provided the integral exists. 
\begin{lemma}\label{lemma:u-versus-V0}
For  $(a,q)\in\mathcal A$, $F\in L^{2}(Q)$,  let $u\in H^{2}(Q)$ be the solution of
\eqref{eq:wave-UCP-extended} and let $T'\in \left(0, \min\left\{\frac{T_\star}{3},\ \sqrt{\frac{\kappa}{\alpha}} ,\ \frac{T_0}{8} \right\}\right)$. Also assume that  $\alpha_1>0$ and
$\beta=\alpha_2+\frac{T_\star^{2}}{2}$ be  constants as in Lemma
\ref{Lemma:Projection_V0}. Then there exist $C>0$ and $\eta_0\geq1$ such
that
\begin{equation}\label{eq:u-final-estimate}
\|u\|_{H^{1}\left((-T',T')\times(\mathcal O_2\setminus\mathcal O_3)\right)}
\leq
C\left(\frac{1}{\sqrt{\eta}}\|u\|_{H^{2}(Q)} +
e^{\beta\eta} \Bigl(
\|F\|_{L^{2}((0,T)\times\mathcal O)}+
\|\partial_\nu u\|_{L^{2}(\Gamma_T)}
\Bigr) \right).
\end{equation}
holds for every $\eta\geq\eta_0$. 
\end{lemma}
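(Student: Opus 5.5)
The plan is to split $u$ on the small cylinder $(-T',T')\times(\mathcal O_2\setminus\mathcal O_3)$ as
\[
u=(u-V_0^\eta)+V_0^\eta .
\]
The term $V_0^\eta$ is handled by Lemma~\ref{Lemma:Projection_V0}. The difference $u-V_0^\eta$ is controlled by a Fourier-transform estimate in time that produces the factor $\eta^{-1/2}$.

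\textbf{Step 1: reduce to $\psi u$.} Since $T'<T_0/8$ and $\psi\equiv1$ on $[-T_0,T_0]$, we have $u=\psi u$ on $(-T',T')\times\Omega$. It therefore suffices to bound $\|\psi u-V_0^\eta\|_{H^1(\mathbb R\times(\mathcal O_2\setminus\mathcal O_3))}$.

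\textbf{Step 2: compute the Fourier multiplier.} By definition,
\[
V_0^\eta=\sqrt{\tfrac{\eta}{2\pi}}\,\mathfrak K^\eta\ast_t(\psi u),
\qquad
\widehat{\sqrt{\eta/2\pi}\,\mathfrak K^\eta}(\xi)=e^{-\xi^2/(2\eta)} .
\]
Hence, for fixed $x$,
\[
\widehat{\psi u-V_0^\eta}(\xi,x)=\bigl(1-e^{-\xi^2/(2\eta)}\bigr)\widehat{\psi u}(\xi,x).
\]

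\textbf{Step 3: use the elementary multiplier bound.} We use
\[
0\le 1-e^{-y}\le \min\{1,y\}\le \sqrt{y},\qquad y=\frac{\xi^2}{2\eta}.
\]
This gives
\[
\bigl|1-e^{-\xi^2/(2\eta)}\bigr|\le \frac{|\xi|}{\sqrt{2\eta}} .
\]

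\textbf{Step 4: apply Plancherel in $t$, then integrate in $x$.} First,
\[
\|\psi u-V_0^\eta\|_{L^2(\mathbb R\times\mathcal O)}\le C\eta^{-1/2}\|\partial_t(\psi u)\|_{L^2}.
\]
The multiplier commutes with $\partial_t$ and $\nabla_x$, so the same argument applied to $\partial_t(\psi u)$ and $\nabla_x(\psi u)$ bounds the $H^1$ norm of the difference by
\[
C\eta^{-1/2}\bigl(\|\partial_t^2(\psi u)\|_{L^2}+\|\partial_t\nabla_x(\psi u)\|_{L^2}+\|\partial_t(\psi u)\|_{L^2}\bigr)\le C\eta^{-1/2}\|u\|_{H^2((-T,T)\times\Omega)}.
\]
The zero extension of the earlier lemma gives $\|u\|_{H^2((-T,T)\times\Omega)}=\|u\|_{H^2(Q)}$.

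\textbf{Step 5: convert the $V_0^\eta$ bound back to $Q$.} Lemma~\ref{Lemma:Projection_V0} gives
\[
\|V_0^\eta\|_{H^1}\le C\bigl(e^{-\alpha_1\eta}\|u\|_{H^1((-T,T)\times\mathcal O)}+e^{\beta\eta}(\|F\|+\|\partial_\nu u\|)\bigr).
\]
Since $u$ and $F$ vanish for $t\le0$, the norms over $(-T,T)$ equal those over $(0,T)$, and $\|\partial_\nu u\|_{L^2((-T,T)\times\Gamma)}=\|\partial_\nu u\|_{L^2(\Gamma_T)}$.

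\textbf{Step 6: absorb and add.} Use $e^{-\alpha_1\eta}\|u\|_{H^1}\le C\eta^{-1/2}\|u\|_{H^2(Q)}$, valid for $\eta\ge\eta_0$ because $\sqrt\eta\,e^{-\alpha_1\eta}$ is bounded. Adding the two pieces yields \eqref{eq:u-final-estimate}.

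The main point to check is Step 3: the multiplier bound must be applied to each first-order derivative, which is what costs the second-order norm $\|u\|_{H^2}$ and yields only the algebraic rate $\eta^{-1/2}$. This rate is exactly the one in the statement.
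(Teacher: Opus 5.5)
Your proposal is correct and follows essentially the same route as the paper. You split $u=(u-V_0^\eta)+V_0^\eta$ on the small cylinder, bound the difference via the multiplier $1-e^{-\xi^2/(2\eta)}$ and Plancherel applied to $\psi u$, $\partial_t(\psi u)$ and $\nabla_x(\psi u)$ (giving $\eta^{-1/2}\|u\|_{H^2(Q)}$), and control $V_0^\eta$ by Lemma \ref{Lemma:Projection_V0}, absorbing $e^{-\alpha_1\eta}$ into $\eta^{-1/2}$. Your explicit remark that the zero extension converts the $(-T,T)$ norms into norms over $(0,T)$ and $\Gamma_T$ is a step the paper leaves implicit.
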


\begin{proof}
Let $f(t,x):=\psi(t)u(t,x)$ for $(t,x)\in\mathbb{R}\times\Omega$, where $f$ is extended by zero outside $(-T,T)$ in the time variable. For each fixed $x\in\Omega$, we take the Fourier transform with respect to the variable $t$. Since $\widehat{\mathfrak K^{\eta}}(\xi)=\sqrt{\frac{2\pi}{\eta}}\exp\left(-\frac{\xi^2}{2\eta}\right)$ and $V_0^\eta=\sqrt{\frac{\eta}{2\pi}}\,\mathfrak K^\eta\ast_t f$, the two normalizing factors cancel, and it follows that $\widehat{V_0^\eta}(\xi,x)=\exp\left(-\frac{\xi^2}{2\eta}\right)\widehat f(\xi,x)$. Consequently,
$\widehat f(\xi,x)-\widehat{V_0^\eta}(\xi,x)=\left(1-\exp\left(-\frac{\xi^2}{2\eta}\right)\right)\widehat f(\xi,x)$.
Now observe that  $0\leq1-e^{-a}\leq\min\{a,1\}$ and $\min\{a,1\}^2\leq a$, for any  $a\geq0$, therefore, choosing $a=\frac{\xi^2}{2\eta}$, we obtain
$\left|1-\exp\left(-\frac{\xi^2}{2\eta}\right)\right|^2\leq\frac{\xi^2}{2\eta}$. Integrating with respect to $\xi$ and using the Plancherel theorem, we obtain
\begin{equation}
\begin{aligned}
\left\|f(\cdot,x)-V_0^\eta(\cdot,x)\right\|_{L^2(\mathbb R)}^2
\leq
\frac{C}{\eta}
\|\xi\widehat f(\cdot,x)\|_{L^2(\mathbb R_\xi)}^2
=
\frac{C}{\eta}
\|\partial_t f(\cdot,x)\|_{L^2(\mathbb R)}^2.
\end{aligned}
\end{equation}
Integrating the above estimate with respect to $x\in\Omega$ gives
\begin{equation}\label{eq:smoothing}
\|f-V_0^\eta\|_{L^2(\mathbb R\times\Omega)}^2
\leq
\frac{C}{\eta}
\|\partial_t f\|_{L^2(\mathbb R\times\Omega)}^2.
\end{equation}
Using the given  hypothesis $T'<\frac{T_0}{8} <T_0$ and $\psi\equiv 1$, on $[-T_0,T_0]$, we have
\begin{equation}
f(t,x)=u(t,x)
\ \ 
\text{for every }(t,x)\in(-T',T')\times
\left(\mathcal O_2\setminus\mathcal O_3\right).
\end{equation}
Restricting the preceding estimate to $(-T',T')\times
\left(\mathcal O_2\setminus\mathcal O_3\right)$, we find
\begin{equation}
\|u-V_0^\eta\|_{L^2((-T',T')\times
\left(\mathcal O_2\setminus\mathcal O_3\right))}^2
\leq
\frac{C}{\eta}
\|\partial_tf\|_{L^2(\mathbb R\times\Omega)}^2.
\end{equation}
We next estimate the first-order derivatives of $u-V_0^\eta$. Since the convolution defining $V_0^\eta$ acts only in the time variable, it commutes with spatial differentiation. Thus, for every $j=1,\ldots,n$, we have $\partial_{x_j}V_0^\eta= \sqrt{\frac{\eta}{2\pi}}\mathfrak K^{\eta}\ast_t\partial_{x_j}f$.
Applying \eqref{eq:smoothing} to
$\partial_{x_j}f$, we obtain
\begin{equation}
\begin{aligned}
\|
\partial_{x_j}f
-
\partial_{x_j}V_0^\eta
\|_{L^2(\mathbb R\times\Omega)}^2
&\leq
\frac{C}{\eta}
\|
\partial_t\partial_{x_j}f
\|_{L^2(\mathbb R\times\Omega)}^2.
\end{aligned}
\end{equation}
Hence,
\begin{equation}
\|
\nabla_xf-\nabla_xV_0^\eta
\|_{L^2(\mathbb R\times\Omega)}^2
\leq
\frac{C}{\eta}
\|
\partial_t\nabla_xf
\|_{L^2(\mathbb R\times\Omega)}^2.
\end{equation}
Similarly, differentiation under the convolution gives $\partial_tV_0^\eta= \sqrt{\frac{\eta}{2\pi}}\mathfrak K^{\eta}\ast_t\partial_tf$. Applying \eqref{eq:smoothing} to $\partial_tf$, we obtain
\begin{equation}
\|
\partial_tf-\partial_tV_0^\eta
\|_{{L^2(\mathbb R\times\Omega)}}^2
\leq
\frac{C}{\eta}
\|
\partial_t^2f
\|_{L^2(\mathbb R\times\Omega)}^2.
\end{equation}
Combining the estimates for the function, its time derivative, and its spatial derivatives yields
\begin{equation}
\|f-V_0^\eta\|_{H^1(\mathbb R\times\Omega)}^2
\leq
\frac{C}{\eta}
\|f\|_{H^2(\mathbb R\times\Omega)}^2.
\end{equation}
Restricting the left-hand side to $(-T',T')\times \left(\mathcal O_2\setminus\mathcal O_3\right)$ and using $f=u$ on $(-T',T')\times \left(\mathcal O_2\setminus\mathcal O_3\right)$, we obtain
\begin{equation}
\|u-V_0^\eta\|_{H^1((-T',T')\times \left(\mathcal O_2\setminus\mathcal O_3\right))}^2
\leq
\frac{C}{\eta}
\|\psi u\|_{H^2(\mathbb R\times\Omega)}^2.
\end{equation}

Since $\psi$ is a fixed smooth cutoff function, with $\operatorname{supp}\psi\subset(-T,T)$, multiplication by $\psi$ defines a bounded linear operator on $H^2(\mathbb{R}\times \Omega)$, therefore 
\begin{equation}
\|\psi u\|_{H^2(\mathbb R\times\Omega)}
\leq
C
\|u\|_{H^2((-T,T)\times\Omega)}.
\end{equation}
Consequently,
\begin{equation}\label{est:u-v0}
\|u-V_0^\eta\|_{H^1((-T',T')\times \left(\mathcal O_2\setminus\mathcal O_3\right))}^2
\leq
\frac{C}{\eta}
\|u\|_{H^2((-T,T)\times\Omega)}^2.
\end{equation}
Using the  inequality $|A+B|^2\leq2|A|^2+2|B|^2$, we obtain
\begin{equation}
\begin{aligned}
\|u\|_{H^1((-T',T')\times \left(\mathcal O_2\setminus\mathcal O_3\right))}^2
&\leq
2\|u-V_0^\eta\|_{H^1((-T',T')\times \left(\mathcal O_2\setminus\mathcal O_3\right))}^2
+
2\|V_0^\eta\|_{H^1((-T',T')\times \left(\mathcal O_2\setminus\mathcal O_3\right))}^2.
\end{aligned}
\end{equation}
Using the above estimate along with \eqref{est:u-v0}, it follows that
\begin{equation}
\|u\|_{H^1((-T',T')\times \left(\mathcal O_2\setminus\mathcal O_3\right))}^2
\leq
\frac{C}{\eta}
\|u\|_{H^2((-T,T)\times\Omega)}^2
+
C\|V_0^\eta\|_{H^1((-T',T')\times \left(\mathcal O_2\setminus\mathcal O_3\right))}^2.
\label{eq:u-by-V0}
\end{equation}
Now using the estimate derived in Lemma \ref{Lemma:Projection_V0}, and
enlarging the domain of the $H^1$-norm on its right-hand side from
$\mathcal O$ to $\Omega$, we get 
\begin{equation}\label{eq:previous-V0-estimate}
\begin{aligned}
\|V_0^\eta\|_{H^1((-T',T')\times \left(\mathcal O_2\setminus\mathcal O_3\right))}^2
\leq
C\Bigg(
&e^{-2\alpha_1\eta}
\|u\|_{H^1((-T,T)\times\Omega)}^2
\\
&+
e^{2\beta\eta}
\left(
\|F\|_{L^2((-T,T)\times\mathcal O)}^2
+
\|\partial_\nu u\|_{L^2((-T,T)\times\Gamma)}^2
\right)
\Bigg).
\end{aligned}
\end{equation}
Finally, substituting  \eqref{eq:previous-V0-estimate} into
\eqref{eq:u-by-V0}, we obtain
\begin{equation}
\begin{aligned}
\|u\|_{H^1((-T',T')\times \left(\mathcal O_2\setminus\mathcal O_3\right))}^2
\leq
C\Bigg(
&\frac{1}{\eta}
\|u\|_{H^2((-T,T)\times\Omega)}^2
+
e^{-2\alpha_1\eta}
\|u\|_{H^1((-T,T)\times\Omega)}^2
\\
&+
e^{2\beta\eta}
\left(
\|F\|_{L^2((-T,T)\times\mathcal O)}^2
+
\|\partial_\nu u\|_{L^2((-T,T)\times\Gamma)}^2
\right)
\Bigg).
\end{aligned}
\end{equation}
Now since  $\alpha_1>0$, therefore we can choose $\eta_0\geq1$ such that $e^{-2\alpha_1\eta}
\leq
\frac{C}{\eta}$ for every $\eta\geq\eta_0$ and hence the  exponentially decaying term  can be merged  in the first term on the right-hand side in conjugation with $\| \cdot\|_{H^1((-T,T)\times\Omega)}\leq \| \cdot\|_{ H^2((-T,T)\times\Omega)}$.  Thus, we end up with 
\begin{equation}
\begin{aligned}
\|u\|_{H^1((-T',T')\times \left(\mathcal O_2\setminus\mathcal O_3\right))}^2
\leq
C\Bigg(
&\frac{1}{\eta}
\|u\|_{H^2((-T,T)\times\Omega)}^2
\\
&+
e^{2\beta\eta}
\left(
\|F\|_{L^2((-T,T)\times\mathcal O)}^2
+
\|\partial_\nu u\|_{L^2((-T,T)\times\Gamma)}^2
\right)
\Bigg),\ \mbox{for all  $\eta\geq \eta_0$.}
\end{aligned}
\end{equation}
Finally, taking square roots in the preceding inequality and using
$\sqrt{A+B+D}\leq\sqrt A+\sqrt B+\sqrt D$ for $A,B,D\geq0$, we obtain
\[
\|u\|_{H^1((-T',T')\times \left(\mathcal O_2\setminus\mathcal O_3\right))}
\leq
C\left(
\frac{1}{\sqrt\eta}\|u\|_{H^2((-T,T)\times\Omega)}
+
e^{\beta\eta}
\left(
\|F\|_{L^2((-T,T)\times\mathcal O)}
+
\|\partial_\nu u\|_{L^2((-T,T)\times\Gamma)}
\right)
\right),
\]
which gives \eqref{eq:u-final-estimate} and completes the proof.
\
\end{proof}
Note that the Lemma \ref{lemma:u-versus-V0} proves the estimate \eqref{carleman for ucp} of Theorem \ref{Carleman type estimate required for ucp} on the short time interval $(-T',T')$. We will use an iterative argument to complete the proof  of  Theorem \ref{Carleman type estimate required for ucp}. We start by defining  $\displaystyle T_\sharp:=\sup\bigl\{|t|:\ t\in\operatorname{supp}\psi\bigr\}$, where $\psi$ is the cutoff function defined in the proof of Lemma \ref{lemma:Veta-interpolation}. Since $\operatorname{supp}\psi$ is a compact subset of
$(-T,T)$ containing $[-T_{0},T_{0}]$, we have $T_{0}\leq T_\sharp<T$. In the subsequent Lemma, we will show that the same estimate holds around every center point 
$t_0$ with $|t_0|\leq T-T_\sharp$. To be precise, we prove the following. 
\begin{lemma}\label{Translated_local_estimate}
For  $(a,q)\in\mathcal A$ and $F\in L^{2}(Q)$, let $u\in H^{2}(Q)$ be a solution of
\eqref{eq:wave-UCP-extended}  and let $T'\in \left(0, \min\left\{\frac{T_\star}{3},\ \sqrt{\frac{\kappa}{\alpha}} ,\ \frac{T_0}{8} \right\}\right)$. Also assume that  $\alpha_1>0$ and
$\beta=\alpha_2+\frac{T_\star^{2}}{2}$ be constants as in Lemma
\ref{Lemma:Projection_V0}. Then there exist $C>0$ and $\eta_{0}\geq1$, both independent of $t_0$, such that for every
$\displaystyle t_{0}\in\left[-(T-T_\sharp),\,T-T_\sharp\right]$ the following estimate 
\begin{equation}\label{eq:translated-estimate}
    \|u\|_{H^{1}\left((t_{0}-T',\,t_{0}+T')\times
    (\mathcal O_{2}\setminus\mathcal O_{3})\right)}
    \leq
    C\left(
    \frac{1}{\sqrt\eta}\,\|u\|_{H^{2}(Q)}
    +
    e^{\beta\eta}
    \Bigl(
    \|F\|_{L^{2}((0,T)\times\mathcal O)}
    +
    \|\partial_\nu u\|_{L^{2}(\Gamma_T)}
    \Bigr)
    \right)
\end{equation} 
holds for every $\eta\geq\eta_{0}$.
\end{lemma}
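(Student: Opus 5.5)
The plan is to reduce Lemma~\ref{Translated_local_estimate} to Lemma~\ref{lemma:u-versus-V0} by a time translation. The coefficients $a,q$ depend only on $x$, so the operator $\Box+a\partial_t+q$ commutes with shifts in $t$. For a fixed $t_0$ with $|t_0|\le T-T_\sharp$, define
\[
u_{t_0}(t,x):=u(t+t_0,x),\qquad F_{t_0}(t,x):=F(t+t_0,x).
\]
Here $u$ and $F$ denote the zero extensions to $(-T,T)\times\Omega$ used throughout. Then $u_{t_0}$ solves the same equation with source $F_{t_0}$ and vanishes on the lateral boundary. It is defined on $(-T-t_0,T-t_0)\times\Omega$, and this interval contains $\operatorname{supp}\psi\subset[-T_\sharp,T_\sharp]$ because $|t_0|\le T-T_\sharp$.

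The first step is to rerun the chain Lemma~\ref{lemma:Veta-interpolation} $\to$ Lemma~\ref{Lemma:Projection_V0} $\to$ Lemma~\ref{lemma:u-versus-V0} with $u_{t_0}$ in place of $u$. These proofs use only the following facts:
\begin{itemize}
\item the equation holds on the support of $\psi$;
\item $\psi\equiv1$ on $[-T_0,T_0]$ and $\operatorname{supp}\psi'\subset\{T_0\le|\gamma|\le T_\sharp\}$;
\item the FBI transform only sees $\psi u_{t_0}$, whose support lies inside $[-T_\sharp,T_\sharp]$, which is within the domain of $u_{t_0}$.
\end{itemize}
The initial conditions are never used there; they only entered when extending by zero. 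Every appearance of $\|\cdot\|_{H^k((-T,T)\times\cdot)}$ would be replaced by a norm over $(-T_\sharp,T_\sharp)$, or over the shifted interval. None of the constants $C,\alpha_1,\alpha_2,\beta,\eta_0$ depends on anything except $\psi,\xi,\chi,\ell$, the parameters in \eqref{eq:parameter-constraints} and the coefficients, so they are independent of $t_0$.

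This yields the estimate on $(-T',T')\times(\mathcal O_2\setminus\mathcal O_3)$ for $u_{t_0}$, with right-hand side involving $\|u_{t_0}\|_{H^2((-T_\sharp,T_\sharp)\times\Omega)}$, $\|F_{t_0}\|_{L^2((-T_\sharp,T_\sharp)\times\mathcal O)}$ and $\|\partial_\nu u_{t_0}\|_{L^2((-T_\sharp,T_\sharp)\times\Gamma)}$. Translating back, these norms equal the corresponding norms of $u$, $F$ and $\partial_\nu u$ over $(t_0-T_\sharp,t_0+T_\sharp)\subset(-T,T)$. The left side becomes $\|u\|_{H^1((t_0-T',t_0+T')\times(\mathcal O_2\setminus\mathcal O_3))}$.

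The final step removes the negative-time part. Since $u$ and $F$ are zero extensions, their norms over $(-T,0]$ vanish, and $\partial_\nu u=0$ there as well. Moreover $\|u\|_{H^2((-T,T)\times\Omega)}=\|u\|_{H^2(Q)}$ by the extension lemma. Hence the norms over $(-T,T)$ reduce to norms over $(0,T)$, giving \eqref{eq:translated-estimate}.

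The main obstacle is checking that the earlier proofs genuinely used only data on $\operatorname{supp}\psi$ and not the full interval $(-T,T)$. In Lemma~\ref{lemma:Veta-interpolation}, the bound on $R_t^\eta$ integrated $h$ over $(-T,T)$, but $h$ is supported in $\operatorname{supp}\psi'\cup\operatorname{supp}\psi''$, so $[-T_\sharp,T_\sharp]$ suffices. The $F_t^\eta$ and $\partial_\nu V^\eta$ bounds similarly involve only $\psi F$ and $\psi\partial_\nu u$. In Lemma~\ref{lemma:u-versus-V0}, the bound involves $\|\psi u\|_{H^2}$, which is again controlled on $\operatorname{supp}\psi$.

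I would state this as a short remark: each step is invariant under replacing $(-T,T)$ by any interval containing $\operatorname{supp}\psi$ on which the equation holds. With that remark, uniformity in $t_0$ is immediate.
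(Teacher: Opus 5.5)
Your proposal is correct and follows essentially the same route as the paper. The only difference is that the paper translates the cutoff, $\psi_{t_0}(\gamma)=\psi(\gamma-t_0)$, rather than the solution; the substitution $\gamma\mapsto\gamma+t_0$ in the FBI integral shows your transform of $\psi u_{t_0}$ is just the paper's $V^{\eta,t_0}$ shifted by $t_0$ in $z$. The remaining points match yours: the earlier lemmas use only translation-invariant properties of the cutoff, so the constants do not depend on $t_0$, and the zero extension turns norms over $(-T,T)$ into norms over $(0,T)$.
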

\begin{proof}
    Fix $\displaystyle t_{0}\in\bigl[-(T-T_\sharp),\ T-T_\sharp\bigr]\subseteq (-T,T)$ and define the translated cutoff function $\displaystyle \psi_{t_{0}}(\gamma):=\psi(\gamma-t_{0})$ for $\gamma\in\R$. From the definition of $T_{\sharp}$, we have $\displaystyle \operatorname{supp}\psi\subseteq[-T_{\sharp},T_{\sharp}]$ and hence $\displaystyle\operatorname{supp}\psi_{t_{0}}\subseteq[t_{0}-T_{\sharp},\,t_{0}+T_{\sharp}]\subseteq[-T,T]$ where the last inclusion uses the fact that $|t_{0}|\leq T-T_\sharp$. Extending the function $\psi_{t_{0}}u$ by zero to the domain $\mathbb{R}\times\Omega$, we have $\psi_{t_{0}}u\in H^{2}(\R\times\Omega)$. Moreover, we have
    \begin{equation}\label{eq:shifted-cutoff-norm}
        \begin{aligned}
            \|\psi_{t_{0}}u\|_{H^{2}(\mathbb{R}\times\Omega)}
    \ \leq\ C\,\|u\|_{H^{2}((-T,T)\times\Omega)}
    \ =\ C\,\|u\|_{H^{2}(Q)},
        \end{aligned}
    \end{equation}
where constant $C$ is independent of $t_{0}$ and the last equality holds because the extension in \eqref{eq:wave-UCP-extended} vanishes for $t\leq0$. For $x\in \overline{\Omega}$, let
\begin{equation}\label{eq:shifted-fbi}
  V^{\eta,t_{0}}(z,x)
    :=\sqrt{\frac{\eta}{2\pi}}\int_{\mathbb{R}}\mathfrak{K}^{\eta}(z-\gamma)\,\psi_{t_{0}}(\gamma)\,u(\gamma,x)\,d\gamma, \ \ \text{ for }\ z\in \mathbb{C},  
\end{equation}
which is the transform as defined in \eqref{definition of V eta} with $\psi$ replaced by $\psi_{t_{0}}$ and which is entire in $z$ for $x\in\Omega$. Writing $z=t+\mathrm{i}s$, we fix $t\in\left(t_{0}-\frac{T_{0}}{4},\,t_{0}+\frac{T_{0}}{4}\right)$ and set
\[
    V^{\eta,t_{0}}_{t}(s,x):=V^{\eta,t_{0}}(t+\mathrm{i} s,x)
    \ \ \text{ for } (s,x)\in(-T_{\star},T_{\star})\times\mathcal{O}.
\]
The Carleman estimate of Lemma \ref{lemma:Carleman_estimate} applies to $V^{\eta,t_{0}}_{t}$ on
$(-T_{\star},T_{\star})\times\mathcal{O}$, in the variable $s\in(-T_{\star},T_{\star})$, where
$T_{\star}>0$ is the parameter fixed in Lemma \ref{lemma:Carleman_estimate}, subject to
$\alpha T_{\star}^{2}<\alpha_{0}$. We then project on $s=0$, that is, we take
$V^{\eta,t_{0}}_{t}(0,x)=V^{\eta,t_{0}}(t,x)$ for $t\in(t_{0}-T',\,t_{0}+T')$ and
$x\in\mathcal{O}_{2}\setminus\mathcal{O}_{3}$, as in Lemma \ref{Lemma:Projection_V0}.  Taking
$t_{0}=0$ gives the functions of Lemmas
\ref{lemma:Veta-interpolation}---\ref{lemma:u-versus-V0}. Therefore, the proof of Lemmas \ref{lemma:Veta-interpolation}---\ref{lemma:u-versus-V0} go through with $\psi_{t_0}$ instead of $\psi$, since the only properties of the cutoff they
use are $0\leq\psi_{t_{0}}\leq1$, $\psi_{t_{0}}=1$ on $[t_{0}-T_{0},\,t_{0}+T_{0}]$, the bound on
$\|\psi_{t_{0}}\|_{H^{2}(\mathbb{R})}$ and
\[
    \operatorname{supp}\psi_{t_{0}}'\cup\operatorname{supp}\psi_{t_{0}}''
    \subseteq\bigl\{\gamma\in\mathbb{R}:\ T_{0}\leq|\gamma-t_{0}|\leq T_{\sharp}\bigr\},
\]
none of which is affected by the translation. In particular, for $t$ as above and for $\gamma$ in
this last set we have $|t-\gamma|\geq|\gamma-t_{0}|-|t-t_{0}|\geq T_{0}-\frac{T_{0}}{4}
=\frac{3T_{0}}{4}$, so that $\bigl|\mathfrak{K}^{\eta}(t+\mathrm{i} s-\gamma)\bigr|\leq e^{-c_{0}\eta}$ for
$|s|\leq T_{\star}$ with the same constant
$c_{0}=\frac{1}{2}\left[\left(\frac{3T_{0}}{4}\right)^{2}-T_{\star}^{2}\right]$ as before.  Hence the
constants $C$, $\alpha_{1}$, $\alpha_{2}$, $\beta$ and $\eta_{0}$ are those of Lemmas
\ref{lemma:Veta-interpolation}-\ref{lemma:u-versus-V0} and are independent of $t_{0}$, and Lemma
\ref{lemma:u-versus-V0} yields, for every $\eta\geq\eta_{0}$,
\begin{equation}\label{eq:shifted-local}
\begin{aligned}
\|\psi_{t_{0}}u\|_{H^{1}\left((t_{0}-T',\,t_{0}+T')\times
    (\mathcal O_{2}\setminus\mathcal O_{3})\right)}
    \leq
    C\Biggl(
    &\frac{1}{\sqrt\eta}\,
    \|\psi_{t_{0}}u\|_{H^{2}(\mathbb{R}\times\Omega)}
    \\
    &+
    e^{\beta\eta}
    \Bigl(
    \|F\|_{L^{2}((-T,T)\times\mathcal O)}
    +
    \|\partial_\nu u\|_{L^{2}((-T,T)\times\Gamma)}
    \Bigr)
    \Biggr).
\end{aligned}
\end{equation}
It remains to convert the above estimate into an estimate for $u$. Since $T'<\frac{T_{0}}{8}<T_{0}$, we have
$\psi_{t_{0}}=1$ on $(t_{0}-T',\,t_{0}+T')$, so that $\psi_{t_{0}}u=u$ on
$(t_{0}-T',\,t_{0}+T')\times\Omega$ and the left hand side of \eqref{eq:shifted-local} is the left hand
side of \eqref{eq:translated-estimate}. On the right hand side, the first term is bounded by
$C\|u\|_{H^{2}(Q)}$ due to \eqref{eq:shifted-cutoff-norm} and for the remaining two terms, we use that the extensions vanish for $t\leq0$, which proves the estimate \eqref{eq:translated-estimate}  and completes the proof of the lemma.
\end{proof}
\subsection{Proof of Theorem \ref{Carleman type estimate required for ucp}} This subsection is devoted to proving the 
 main result of this section, stated in  Theorem \ref{Carleman type estimate required for ucp}. For every $\displaystyle t\in[0,T-T_{\sharp}]$ we have $t\in(t-T',\,t+T')$, and $\displaystyle \bigl\{(t_{0}-T',\,t_{0}+T')\bigr\}_{t_{0}\in[0,T-T_{\sharp}]}$ is a cover of the compact set $\displaystyle [0,T-T_{\sharp}]$ by open intervals. By compactness, there exists $N\in\mathbb N$ and $\displaystyle t_{1},\dots,t_{N}\in[0,T-T_{\sharp}]$
such that $\displaystyle [0,T-T_{\sharp}]\times \left(\mathcal{O}_{2}\setminus\mathcal{O}_{3}\right) \ \subseteq\
    \bigcup_{k=1}^{N}\bigl((t_{k}-T',\,t_{k}+T')\times \left(\mathcal{O}_{2}\setminus\mathcal{O}_{3}\right)\bigr)$. Without loss of generality, we can assume that $\displaystyle (0,T-T_{\sharp}+T')\times
    \left(\mathcal{O}_{2}\setminus\mathcal{O}_{3}\right)
    \ \subseteq\
    \bigcup_{k=1}^{N}\bigl((t_{k}-T',\,t_{k}+T')\times
    \left(\mathcal{O}_{2}\setminus\mathcal{O}_{3}\right)\bigr)$.
    Now, applying Lemma \ref{Translated_local_estimate}, we get for every $\eta\geq\eta_{0}$
    \begin{equation}
    \begin{aligned}
    \|u\|_{H^{1}\left((0,T-T_{\sharp}+T')\times \left(\mathcal{O}_{2}\setminus\mathcal{O}_{3}\right)\right)}^{2}
    &\leq\
    \sum_{k=1}^{N}
    \|u\|_{H^{1}\left((t_{k}-T',\,t_{k}+T')\times \left(\mathcal{O}_{2}\setminus\mathcal{O}_{3}\right)\right)}^{2} \\
    &\leq C\sum_{k=1}^{N}\left(\frac{1}{\sqrt\eta}\,\|u\|_{H^{2}(Q)}+ e^{\beta\eta}\Bigl(\|F\|_{L^{2}((0,T)\times\mathcal O)}+
    \|\partial_\nu u\|_{L^{2}(\Gamma_T)}\Bigr)\right)^2\\
    &\leq N C\,\left(\frac{1}{\sqrt\eta}\,\|u\|_{H^{2}(Q)}+ e^{\beta\eta}\Bigl(\|F\|_{L^{2}((0,T)\times\mathcal O)}+
    \|\partial_\nu u\|_{L^{2}(\Gamma_T)}\Bigr)\right)^2
    \end{aligned}
\end{equation}
Taking the square root of both sides it gives
 \begin{equation}\label{eq:sum-over-cover}
    \begin{aligned}
    \|u\|_{H^{1}\left((0,T-T_{\sharp}+T')\times \left(\mathcal{O}_{2}\setminus\mathcal{O}_{3}\right)\right)}
    &\leq C\,\left(\frac{1}{\sqrt\eta}\,\|u\|_{H^{2}(Q)}+ e^{\beta\eta}\Bigl(\|F\|_{L^{2}((0,T)\times\mathcal O)}+
    \|\partial_\nu u\|_{L^{2}(\Gamma_T)}\Bigr)\right)
    \end{aligned}
\end{equation}
for every $\eta\geq\eta_{0}$. Let $\displaystyle T_{\flat}:= T-T_{\sharp}+T'<T$ and also observe that $\displaystyle T_{\sharp}-T'\ >\ T_{0}-\frac{T_{0}}{8}\ =\ \frac{7T_{0}}{8}$. Now, assuming $T_0$ to be arbitrarily small such that $T_{\flat}\in (0,T)$ and $T_{\flat}$ are arbitrarily close to $T$. Then, the above estimates provide the proof of Theorem \ref{Carleman type estimate required for ucp}.
    \section{Proof of Theorem \ref{thm:main_result_partial_data}}\label{Section: proof of main result}
 We partition this section into four subsections. The first and fourth subsections address, respectively, the first-order and higher-order linearization of the given operator, a standard methodology in the analysis of nonlinear operators; see, for example, \cite{RuYuLaiEtAlPartialDataStabilityProblem, Kian2020PartialDI,
Choulli2021, HarrachLin2023Simultaneous,
LassasOksanenSahooSaloTetlow2025,KumarNakamuraVashisth2026}. In the second and third subsection, we prove unique identification of the coefficients $a$ and $q$, which is subsequently used along with higher order linearization established in fourth subsection, to prove  unique recovery of the nonlinear coefficient $r$ in Subsection \ref{Recovery of r}.
\subsection{First-order linearization}
Following \cite{bhardwaj2026reconstructionpotentialdampingcoefficients}, we  derive a first-order linearization of Equation \eqref{equation; IBVP}, which plays a crucial role in establishing the unique identification of the damping coefficient $a$ and the linear potential $q$. For a given set of Dirichlet data  $f_1,f_2,\dots,f_{\ell} \in H^{m+1}(\Sigma)$, we choose $\epsilon = (\epsilon_1,\dots,\epsilon_{\ell})$ such that 
\[\epsilon f:= \epsilon_1 f_1 +\dots + \epsilon_{\ell} f_{\ell} \in \mathcal{F}^\varrho_{m+1},\] 
where  $\epsilon_j\geq 0$ for each $1\leq j\leq \ell$.
Let $u(t,x):=u_{\epsilon f}(t,x)\in \mathscr{E}_{m+1}$ 
be the solution to following IBVP
\begin{align}\label{eq;Linearization}
\begin{cases}
\Box u(t,x) + a(x)\partial_t u(t,x) + q(x)u(t,x) =- r(x)u^{\ell}(t,x), 
&(t,x)\in  Q,\\
u(t,x)=\epsilon f(t,x), &(t,x)\in \Sigma,\\
u(0,x)=0,\quad \partial_t u(0,x)=0 ,& \quad x\in  \Omega. 
\end{cases}
\end{align}
We  differentiate the aforementioned IBVP with respect to $\epsilon_i\ (1\leq i\leq \ell)$, to arrive at
\begin{align}\label{eq;first order Linearization eqn}
\begin{cases}
\Box (\partial_{\epsilon_i}u)
+ a(x)\partial_t(\partial_{\epsilon_i}u)
+ q(x)\,\partial_{\epsilon_i}u
=- \ell\,r(x)\,u^{\ell-1}\,\partial_{\epsilon_i}u
,&
 (t,x)\in  Q,\\
\partial_{\epsilon_i}u(t,x) = f_i (t,x),  &(t,x)\in  \Sigma,\\
\partial_{\epsilon_i}u(0,x)=0,\quad\partial_t(\partial_{\epsilon_i}u)(0,x)=0,
&\quad x\in  \Omega.
\end{cases}
\end{align}
On evaluating the above expression at $\epsilon=0$, we achieve 
 \begin{align}\label{eq;Notations of first order Linearization eqn}
\begin{cases}
\Box v_{i}(t,x) + a(x)\partial_t v_{i}(t,x) + q(x)v_{i}(t,x) = 0,
& (t,x)\in  Q,\\
v_{i}(t,x) = f_i(t,x), & (t,x)\in  \Sigma,\\
v_{i}(0,x)=0,\quad \partial_t v_{i}(0,x)=0, 
&\quad x\in  \Omega,
\end{cases}
\end{align}
where 
\begin{equation}\label{eq:def-vk}
v_{i}(t,x):=\partial_{\epsilon_i}u(t,x)\big|_{\epsilon=0}, \ \ 1\leq i\leq \ell.
\end{equation}
 In the last step, we used the fact that the forward problem admit a unique solution and thus $u\lvert_{\epsilon=0}=0$. The DtN map corresponding to IBVP \eqref{eq;Linearization} is given by 
 \begin{align}\label{DN map for epsilon f}
\begin{aligned}
\mathcal{N}_{a,q,r}(\epsilon f):=\partial_{\nu} u \big|_{\Sigma},\ \ \epsilon f\in \mathcal{F}^\varrho_{m+1}.
\end{aligned} 
\end{align}
The first-order linearization of the above DtN map is given by 
\begin{align}
 \left.\partial_{\epsilon_i}\partial_\nu u(t,x,\epsilon_1,\dots,\epsilon_{\ell})\right|_{\Sigma,\, \epsilon=0}=\partial_\nu v_i(t,x)|_{\Sigma},\quad   1\leq i\leq \ell.
\end{align}
As a consequence, the DtN map
corresponds to Equation
\eqref{eq;Notations of first order Linearization eqn} is a first-order linearization of DtN map \eqref{DN map for epsilon f}, and is given by
\begin{align}\label{eq:DN-map-v_k}
\mathcal{N}_{a,q}^{~v_i} (f_i)\big|_{\Sigma}:=
\left(\partial_{\epsilon_i}
\mathcal{N}_{a,q,r}(\epsilon_1 f_1 +\dots+ \epsilon_{\ell} f_{\ell})
\big|_{\Sigma}\right)\big|_{\epsilon=0}
=
\partial_{\epsilon_i}
\partial_\nu u
\big|_{\Sigma, \epsilon=0}
=
\partial_\nu v_i \big|_{\Sigma}, 
\end{align}
for all $\ 1\leq i\leq \ell.$ The restriction of the above expression to the subset $\Gamma_T$ than reduces to the partial DtN map corresponding to Equation
\eqref{eq;Notations of first order Linearization eqn} and is given by
\begin{align}\label{dn for vi}
  \Lambda_{a,q}^{v_i}(f_i):=  \left(\mathcal{N}_{a,q}^{~v_i} (f_i)\big|_{\Sigma}\right)\bigg|_{\Gamma_T}= \partial_\nu v_i|_{\Gamma_T}.
\end{align}
In the analysis of the unique recovery of the damping coefficient \(a\) and the zeroth-order coefficient $q$, we use  the partial DtN map $\Lambda^{v_i}_{a,q}$ corresponding to  the linearized equation  \eqref{eq;Notations of first order Linearization eqn}. Without loss of generality, we set \(v_i := v\) and \(f_i := \mathtt{f}\). Under this convention, we obtain the following representation
\begin{align}\label{eq;Notations of first order Linearization eqn 1}
\begin{cases}
\Box v(t,x) + a(x)\partial_t v(t,x) + q(x)v(t,x) = 0,
& (t,x)\in  Q,\\
v(t,x) = \mathtt{f}(t,x), & (t,x)\in  \Sigma,\\
v(0,x)=0,\quad \partial_t v(0,x)=0, 
&\quad x\in  \Omega,
\end{cases}
\end{align}
and the partial DtN map corresponding to \eqref{eq;Notations of first order Linearization eqn 1} becomes
\begin{align}\label{partial dn map for linearized pde}
\Lambda_{a,q}^v(\mathtt{f})=\mathcal{N}_{a,q}^{~v} (\mathtt{f})\big|_{\Gamma_T}
=
\partial_\nu v \big|_{\Gamma_T},\ \mathtt{f} \in H^{m+1}(\Sigma)
\end{align}
and it is known for all $\mathtt{f} \in H^{m+1}(\Sigma)$. 
Next, following \cite{Isakov1991AnIH}, we present the so‑called geometric optics solution, a particular class of special solutions associated with Equations \eqref{eq;Notations of first order Linearization eqn 1} and its formal $L^2-$adjoint. 
\begin{lemma}{\cite[Lemma~~2]{Isakov1991AnIH}}\label{lemma: geom opt sol v_{1}}
Let $T>0$, and let $\Omega \subset \mathbb{R}^d$ $(d\geq 2)$ be an open, bounded, and connected set with smooth boundary $\partial\Omega$. Consider any triple $(a,q,\Phi)\in C_{c}^{\infty}(\Omega)\times C_{c}^{\infty}(\Omega)\times C_{c}^{\infty}(\mathbb{R}^{d})$ such that $\supp(\Phi)\cap\Omega=\emptyset$, and $\omega \in \mathbb{S}^{d-1}$. Then, for every $x \in \mathbb{R}^d$ and $\tau>0$, Equation \eqref{eq;Notations of first order Linearization eqn 1} admits a solution of the form 
\begin{align}\label{eq: v_{(1)}}
\begin{split}
 v(t,x) = \Phi(x+t\omega)T_v(t,x)\exp\bigl(\mathrm{i}\tau(x\cdot\omega + t)\bigr) + R_{v}(t,x,\tau),
\end{split}
\end{align}
where 
\begin{align*}
    T_v(t,x)=\exp\left(-\frac{1}{2}\int_0^t a(x+s\omega)\,ds\right),
\end{align*}
and the correction term $R_{v}(t,x,\tau)$ vanishes at the initial time, that is,
\begin{align}\label{R_{1} initia}
   R_{v}(0,x,\tau) = \partial_{t}R_{v}(0,x,\tau) = 0 \quad \text{in } \Omega.
\end{align}
Moreover, $R_{v}(t,x,\tau)$ satisfies the estimate
\begin{align}\label{eq: r_{(1)}}
  \tau\|R_{v}\|_{L^{2}(Q)}+\|\partial_{t} R_{v}\|_{L^{2}(Q)} \leq C \|\Phi\|_{H^{3}(\mathbb{R}^d)}.
\end{align}
\end{lemma}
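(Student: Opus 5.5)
The construction is a WKB ansatz followed by an energy estimate for the remainder. I would set
\[
v_0(t,x):=\Phi(x+t\omega)\,T_v(t,x)\,e^{\mathrm{i}\tau(x\cdot\omega+t)},
\qquad
P:=\Box+a\partial_t+q,
\]
and compute $Pv_0$. The phase $\varphi=x\cdot\omega+t$ satisfies $(\partial_t\varphi)^2-|\nabla_x\varphi|^2=0$, so the $\tau^2$ term drops out.

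The coefficient of $\tau$ is $2\mathrm{i}\,(\partial_t-\omega\cdot\nabla_x)A+\mathrm{i}aA$, where $A:=\Phi(x+t\omega)T_v$. Since $(\partial_t-\omega\cdot\nabla_x)\Phi(x+t\omega)=0$, this reduces to a transport equation along the lines $x+s\omega$ for the amplitude $T_v$. I need to check the sign carefully here: the transport operator $\partial_t-\omega\cdot\nabla_x$ annihilates functions of $x+t\omega$. The amplitude as written, $\exp(-\tfrac12\int_0^t a(x+s\omega)\,ds)$, should then be read with the appropriate characteristic direction, i.e.\ with $x-s\omega$ or $x+(t-s)\omega$ inside, so that $(\partial_t-\omega\cdot\nabla_x)T_v=-\tfrac12 aT_v$. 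I would verify this and adjust the sign convention in the integral, following Isakov. With the correct convention the $O(\tau)$ term vanishes, and
\[
Pv_0=e^{\mathrm{i}\tau\varphi}\,(\Box+a\partial_t+q)A=:e^{\mathrm{i}\tau\varphi}g .
\]
Because $a,q$ are smooth, $g$ is independent of $\tau$ and satisfies $\|g\|_{H^1(Q)}\le C\|\Phi\|_{H^3(\mathbb{R}^d)}$, the amplitude involving at most two derivatives of $\Phi$.

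Next I would define $R_v$ as the solution of
\[
PR_v=-e^{\mathrm{i}\tau\varphi}g \ \text{ in } Q,\qquad R_v|_{\Sigma}=0,\qquad R_v(0,\cdot)=\partial_tR_v(0,\cdot)=0 .
\]
Then $v=v_0+R_v$ solves \eqref{eq;Notations of first order Linearization eqn 1}, with Dirichlet datum $\mathtt f=v_0|_\Sigma$, and \eqref{R_{1} initia} holds by construction. One also checks that $v_0$ itself has vanishing Cauchy data at $t=0$ on $\Omega$, since $\Phi(x)=0$ for $x\in\Omega$; this is where $\supp\Phi\cap\Omega=\emptyset$ is used. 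The standard energy estimate gives
\[
\|\partial_tR_v\|_{L^2(Q)}\le C\|g\|_{L^2(Q)}\le C\|\Phi\|_{H^3},
\]
which is the second half of \eqref{eq: r_{(1)}}.

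The main obstacle is the gain $\tau\|R_v\|_{L^2(Q)}\le C$. To get it I would exploit the oscillation of the source by the standard trick of writing
\[
e^{\mathrm{i}\tau\varphi}g=\partial_t\!\left(\frac{e^{\mathrm{i}\tau\varphi}g}{\mathrm{i}\tau}\right)-\frac{e^{\mathrm{i}\tau\varphi}\partial_tg}{\mathrm{i}\tau}.
\]
Set $W:=\int_0^tR_v\,ds$. Because the coefficients are time-independent, $W$ satisfies the same hyperbolic problem with source $-\int_0^te^{\mathrm{i}\tau\varphi}g\,ds$, which after integrating by parts in $s$ is $O(\tau^{-1}\|g\|_{H^1})$ in $L^2$. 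Note that the damping term $a\partial_tW=aR_v$ commutes correctly since $a$ depends only on $x$. The energy estimate then gives $\|\partial_tW\|_{L^2}=\|R_v\|_{L^2(Q)}\le C\tau^{-1}\|g\|_{H^1(Q)}\le C\tau^{-1}\|\Phi\|_{H^3}$, which is exactly where the $H^3$ norm of $\Phi$ enters. Combining the two estimates yields \eqref{eq: r_{(1)}}.
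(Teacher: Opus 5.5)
Your proposal is correct and is essentially the standard argument behind this lemma, which the paper does not reprove but simply cites from Isakov. The argument runs as follows: a WKB ansatz with the null phase $x\cdot\omega+t$, a transport equation that kills the $O(\tau)$ term, the energy estimate for $\partial_t R_v$, and the time-integration trick $W=\int_0^t R_v\,ds$, which gains the factor $\tau^{-1}$ at the cost of one extra derivative of $\Phi$ (hence the $H^3$ norm).

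Your sign worry is unfounded. Since $\omega\cdot\nabla_x\int_0^t a(x+s\omega)\,ds=a(x+t\omega)-a(x)$, the amplitude $T_v$ exactly as stated satisfies $(\partial_t-\omega\cdot\nabla_x)T_v=-\tfrac12 a\,T_v$, so no adjustment is needed. The $x+(t-s)\omega$ form you mention is the same function after the substitution $s\mapsto t-s$, while the $x-s\omega$ variant would actually fail.
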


\begin{lemma}{\cite[Lemma~~3]{Isakov1991AnIH}}\label{lemma: geom opt sol v}
Let $T>0$, and let $\Omega \subset \mathbb{R}^d$ $(d\geq 2)$ be an open, bounded, and connected set with smooth boundary $\partial\Omega$. Let $a=a_1 \in C_{c}^{\infty}(\Omega)$ and $(q,\Phi)\in  C_{c}^{\infty}(\Omega)\times C_{c}^{\infty}(\mathbb{R}^{d})$ such that 
\[
\bigl(\supp(\Phi)\pm T\omega\bigr)\cap\Omega=\emptyset,
\]
and $\omega \in \mathbb{S}^{d-1}$. Then, for every $x \in \mathbb{R}^d$ and $\tau>0$, Equation \eqref{FBVP; v} (given below)  admits a solution of the form 
\begin{align}\label{eq: v}
  w(t,x) = \Phi(x+t\omega)T_w(t,x)\exp\bigl(\mathrm{i}\tau(x\cdot\omega + t)\bigr) + R_w(t,x,\tau),
\end{align}
where
\begin{align*}
    T_w(t,x)=\exp\left(\frac{1}{2}\int_0^t a_1(x+s\omega)\,ds\right),
\end{align*}
and the correction term $R_w(t,x,\tau)$ vanishes at the final time, that is,
\begin{align}
   R_w(T,x,\tau) = \partial_{t}R_w(T,x,\tau) = 0 \quad \text{in } \Omega.
\end{align}
Furthermore, $R_w(t,x,\tau)$ satisfies the estimate
\begin{align}\label{eq: estimate of r_{(2)}}
  \tau\|R_w\|_{L^{2}(Q)}+\|\partial_{t} R_w\|_{L^{2}(Q)} \leq C \|\Phi\|_{H^{3}(\mathbb{R}^d)}.
\end{align}
\end{lemma}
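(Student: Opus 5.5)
The statement to prove is Lemma~\ref{lemma: geom opt sol v}: a geometric optics solution $w$ for the adjoint problem \eqref{FBVP; v}, which is the formal $L^2$-adjoint of \eqref{eq;Notations of first order Linearization eqn 1}, namely $\Box w - a_1\partial_t w + q w=0$ in $Q$ with vanishing data at $t=T$. I would mirror the construction of Lemma~\ref{lemma: geom opt sol v_{1}}, reversing the time direction.

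\textbf{Ansatz and amplitude.} Set $\phi(t,x)=x\cdot\omega+t$ and $w_0=\Phi(x+t\omega)T_w(t,x)e^{\mathrm{i}\tau\phi}$. Since $|\nabla_x\phi|^2=(\partial_t\phi)^2$, the $\tau^2$ term in $(\Box-a_1\partial_t+q)w_0$ cancels. The $\tau^1$ term is $2\mathrm{i}\tau(\partial_t-\omega\cdot\nabla_x)A-\mathrm{i}\tau a_1A$ times $e^{\mathrm{i}\tau\phi}$, with $A=\Phi(x+t\omega)T_w$. Note that $\phi$ is constant along $(t,x)\mapsto(t+s,x-s\omega)$, so the relevant transport operator is $\partial_t-\omega\cdot\nabla_x$. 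The function $\Phi(x+t\omega)$ is annihilated by $\partial_t-\omega\cdot\nabla_x$, so the transport equation reduces to $2(\partial_t-\omega\cdot\nabla_x)T_w=a_1T_w$. I would check that the stated $T_w$ solves this with the correct sign convention, correcting the characteristic direction ($x\pm s\omega$) if needed so that it matches the one used in Lemma~\ref{lemma: geom opt sol v_{1}}. The sign flip relative to $T_v$ comes exactly from the $-a_1\partial_t$ in the adjoint operator. With $T_w$ fixed, $(\Box-a_1\partial_t+q)w_0=e^{\mathrm{i}\tau\phi}g$, where $g=(\Box-a_1\partial_t+q)A$ is independent of $\tau$ and satisfies $\|g\|_{H^1(Q)}\le C\|\Phi\|_{H^3}$.

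\textbf{Remainder problem.} Let $R_w$ solve the backward problem $(\Box-a_1\partial_t+q)R_w=-e^{\mathrm{i}\tau\phi}g$ in $Q$, $R_w=0$ on $\Sigma$, and $R_w(T)=\partial_tR_w(T)=0$. Boundary compatibility holds because $a_1$ and $q$ are compactly supported in $\Omega$ and $\supp\Phi\cap\Omega=\emptyset$ (part of the hypothesis $(\supp\Phi\pm T\omega)\cap\Omega=\emptyset$). Along the relevant time interval, the support of $\Phi(x+t\omega)$ therefore stays away from $\partial\Omega$ near $t=0$ and $t=T$, so $w_0$ vanishes there. This means the Dirichlet trace of $w$ is simply that of $w_0$, and the homogeneous final conditions are consistent. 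Substituting $t\mapsto T-t$ turns this into a forward damped wave IBVP with damping $+a_1$, which is well posed by standard energy estimates.

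\textbf{Main obstacle: the factor $\tau$ gain.} The energy estimate alone gives $\|\partial_tR_w\|_{L^2}\le C\|g\|_{L^2}$ but no decay of $R_w$ itself. To obtain $\tau\|R_w\|_{L^2}\le C$, I would write $e^{\mathrm{i}\tau\phi}=(\mathrm{i}\tau)^{-1}\partial_te^{\mathrm{i}\tau\phi}$ and let $\tilde R$ solve the same backward problem with source $-\partial_t$ removed, i.e.\ $(\Box-a_1\partial_t+q)\tilde R=-e^{\mathrm{i}\tau\phi}g$ replaced by the antiderivative formulation. Equivalently, I would set $R_w=\partial_t S$ plus a correction, where $S$ solves the problem with source $-(\mathrm{i}\tau)^{-1}e^{\mathrm{i}\tau\phi}g$. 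Time-independence of the coefficients gives $\partial_t$ commuting with the operator, up to boundary terms controlled by $\partial_tg\in L^2$, which is the reason $H^3$ norms of $\Phi$ are needed. Energy estimates for $S$ then yield $\|R_w\|_{L^2}\le C\tau^{-1}\|g\|_{H^1}$, as in \cite{Isakov1991AnIH}. Combining this with the plain energy bound for $\partial_tR_w$ gives \eqref{eq: estimate of r_{(2)}}.
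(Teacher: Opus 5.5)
Your proposal is correct and is essentially the standard Rakesh--Symes/Isakov argument behind \cite[Lemma~3]{Isakov1991AnIH}, which the paper cites without reproducing. The transport equation fixes $T_w$, a backward energy estimate bounds $\partial_tR_w$, and writing $R_w=\partial_tS+\tilde R$ gives $\tau\|R_w\|_{L^2(Q)}\le C\|g\|_{H^1(Q)}\le C\|\Phi\|_{H^3(\mathbb{R}^d)}$; here $\tilde R$ solves the backward problem with the interior source $(\mathrm{i}\tau)^{-1}e^{\mathrm{i}\tau\phi}\partial_tg$, which is the leftover you described as ``boundary terms''.

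Two details to tidy up:
\begin{enumerate}
\item The stated $T_w$ needs no correction, since $\omega\cdot\nabla_x\int_0^ta_1(x+s\omega)\,ds=a_1(x+t\omega)-a_1(x)$ gives $2(\partial_t-\omega\cdot\nabla_x)T_w=a_1T_w$.
\item The final conditions are consistent because of $(\supp\Phi-T\omega)\cap\Omega=\emptyset$. This makes $\Phi(y+t\omega)$ vanish for $(t,y)$ near $(T,x)$ whenever $x\in\Omega$, so $w_0$, $\partial_tw_0$ and $g$ vanish at $t=T$ in $\Omega$; the last of these also gives $\partial_t^2S(T)=0$. By contrast, $\supp\Phi\cap\Omega=\emptyset$ is not part of that hypothesis and is not needed for $w$.
\end{enumerate}
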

\subsection{\texorpdfstring{Recovery of damping coefficient}{Recovery of a}} 
In this subsection, we first derive the integral identity, which is subsequently used to obtain the unique recovery of the damping coefficient. To do so, 
for $j=\{1,2\}$, let $v_{j}$  be the solution of the IBVP 
\begin{align}\label{equation; IBVP u_ell}
    \begin{cases}
      \Box v_{j}(t,x) + a_j(x)\partial_t v_{j}(t,x) + q_j(x) v_{j}(t,x) = 0,  & (t,x) \in Q,\\
      v_{j}(t,x)  = \mathtt{f}(t,x), & (t,x) \in\Sigma,\\
      v_{j}(0, x )=0,\quad \partial_t v_{j}(0, x ) = 0,  &\quad x \in\Omega.
    \end{cases}
\end{align}
From the hypothesis of Theorem \ref{thm:main_result_partial_data}, we have 
\[
\Lambda_{a_1,q_1,r_1}(\epsilon f)=\Lambda_{a_2,q_2,r_2}(\epsilon f), \qquad \text{for all } \epsilon f \in \mathcal{F}^\varrho_{m+1},
\]
which are precisely the restrictions of the DtN map given by \eqref{DN map for epsilon f} to the subset $\Gamma_T$. Using this identity, together with Equations \eqref{eq:DN-map-v_k} and \eqref{partial dn map for linearized pde}, we have
\begin{align}
  \Lambda_{a_1,q_1}^{v_1}(\mathtt{f})=  \Lambda_{a_2,q_2}^{v_2}(\mathtt{f}), \quad \mathtt{f} \in H^{m+1}(\Sigma).
\end{align}
That is, $\partial_\nu v_1|_{\Gamma_T}=\partial_\nu v_2|_{\Gamma_T}$.
Now if we denote 
\begin{align}
    \begin{aligned}
       v(t,x):=(v_{1}-v_{2})(t,x),\ 
       a(x):=(a_2-a_1)(x),\ 
        q(x):=(q_2-q_1)(x),
    \end{aligned}
\end{align}
we obtain
\begin{align}\label{rhs pde}
    \begin{cases}
      \Box v(t,x) + a_1(x)\partial_t v(t,x) + q_1(x) v(t,x) = a(x)\partial_t v_2(t,x)+q(x)v_2(t,x),  & (t,x) \in Q,\\
      v(t,x)  = 0, & (t,x) \in\Sigma,\\
      v(0, x )=0, \quad \partial_t v(0, x ) = 0,  &\quad  x \in\Omega,
    \end{cases}
\end{align}
 and \(\partial_\nu v|_{\Gamma_T}=0.\)
We then define a smooth cut-off function $\Xi \in C^{\infty}(\overline\Omega,[0,1])$ by
\begin{equation}\label{eqn: construction theta}
\Xi:= \begin{cases}
0 \qquad\text{ in } \mathcal{O}_{3},\\
1 \qquad\text{ in } \Omega_{2},
\end{cases}
\end{equation}
where $\Omega_{j} = \Omega \setminus \overline{\mathcal{O}_{j}}$.
Note that 
\begin{align}\label{i equation}
    \begin{aligned}
           \Box (\Xi v) + a_1(x)\partial_t (\Xi v) + b_1(x) (\Xi v)=\Xi I_1- v\Delta_x \Xi -2 \nabla_x v \cdot \nabla_x  \Xi,
    \end{aligned}
\end{align}
where $I_1=\Box v +a_1(x) \partial_t v +q_1(x) v$. 
Invoking \eqref{rhs pde}, we obtain  \(I = a(x)\,\partial_t v_2(t,x) + q(x)\,v_2(t,x)\) in \(Q\). Substituting this expression into the preceding relation then yields
\begin{align}
    \begin{aligned}
           \Box (\Xi v) + a_1(x)\partial_t (\Xi v) + q_1(x) (\Xi v)=\Xi\left(a(x)\partial_t v_2+q(x)v_2\right)- v\Delta_x \Xi -2 \nabla_x v \cdot \nabla_x  \Xi.
    \end{aligned}
\end{align}
Now, if we denote $\widetilde{v}:=\Xi v$, then $\widetilde{v}$ solves the following initial boundary value problem
\begin{align}\label{v tilde equation}
    \begin{cases}
      \Box \widetilde{v} + a_1(x)\partial_t \widetilde{v} + q_1(x) \widetilde{v} = \Xi\left(a(x)\partial_t v_2+q(x)v_2\right)- v\Delta_x \Xi -2 \nabla_x v \cdot \nabla_x  \Xi,  & (t,x) \in Q,\\
      \widetilde{v}(t,x)  = 0, & (t,x) \in\Sigma,\\
      \widetilde{v}(0, x )=0, \quad \partial_t \widetilde{v}(0, x ) = 0,  &\quad  x \in\Omega,
    \end{cases}
\end{align}
where $v$ is a solution to Equation \eqref{rhs pde}.
Since $(a_1,q_1)=(a_2,q_2)$ in $\mathcal{O}$, we have $(a,q)=0$ in $\mathcal{O}$. Also, we observe that $\Xi a=a$ and $\Xi q=q$ in $\overline{\Omega}$. Indeed, if $x\in \mathcal{O}_2$, we have $a=0$, as $\mathcal{O}_2\subset \mathcal{O}$. That means, $\Xi a=0=a$ in $\mathcal{O}_2$. Now, if we consider $x\in \overline{\Omega}\setminus\mathcal{O}_2$, then $\Xi=1$ which then leads to $\Xi a=a$ in $\overline{\Omega}\setminus\mathcal{O}_2$. Thus, we conclude that $\Xi a=a$ in $\overline{\Omega}$.
Similarly, we can derive $\Xi q=q$ in $\overline{\Omega}$. Moreover, $- v\Delta_x \Xi -2 \nabla_x v \cdot \nabla_x  \Xi\equiv0$ in $(0,T)\times (\mathcal{O}_3\cup \Omega_2)$. In view of the foregoing calculations, we can rewrite Equation \eqref{v tilde equation} as 
\begin{align}\label{before achieved}
    \begin{cases}
      \Box \widetilde{v} + a_1(x)\partial_t \widetilde{v} + q_1(x) \widetilde{v} = F,&   (t,x) \in Q,\\
      v(t,x)  = 0, & (t,x) \in\Sigma,\\
      v(0, x )=0, \quad \partial_t v(0, x ) = 0,  &\quad  x \in\Omega.
    \end{cases}
\end{align}
where \[F:=a(x)\partial_t v_2+q(x)v_2- v\Delta_x \Xi -2 \nabla_x v \cdot \nabla_x  \Xi\] is supported in $(0,T)\times (\O\setminus \mathcal{O})$. 
Since $\widetilde{v}=\Xi v$ vanishes in $\mathcal{O}_3$, we have 
\[
\partial_\nu \widetilde{v}|_{\Sigma}=0.
\]
Next, we use Corollary \ref{UCP} to arrive at 
\[\widetilde{v} = 0 \quad \text{in } (0,T) \times (\Omega_{3}\setminus \Omega_{2}).\]
Recall that $(a,q)$ vanish in $\mathcal{O}$ and thus in $\Omega_{3}\setminus \Omega_{2}$. Since $   \Box \widetilde{v} + a_1(x)\partial_t \widetilde{v} + q_1(x) \widetilde{v} = F$ in $Q$ and $\widetilde{v} = 0 \quad \text{in } (0,T) \times (\Omega_{3}\setminus \Omega_{2})$, we obtain 
$- v\Delta_x \Xi -2 \nabla_x v \cdot \nabla_x  \Xi=0 \text{ in } (0,T) \times (\Omega_{3}\setminus \Omega_{2})$. Also, we have seen that $- v\Delta_x \Xi -2 \nabla_x v \cdot \nabla_x  \Xi\equiv0$ in $(0,T)\times (\mathcal{O}_3\cup \Omega_2)$. This results in $- v\Delta_x \Xi -2 \nabla_x v \cdot \nabla_x  \Xi\equiv0$ in $Q$.
Finally, Equation \eqref{v tilde equation} boils down to
\begin{align}\label{achived}
    \begin{cases}
      \Box \widetilde{v} + a_1(x)\partial_t \widetilde{v} + q_1(x) \widetilde{v} = a(x)\partial_t v_2+q(x)v_2,  & (t,x) \in Q,\\
      \widetilde{v}(t,x)  = 0, & (t,x) \in\Sigma,\\
      \widetilde{v}(0, x )=0, \quad \partial_t \widetilde{v}(0, x ) = 0,  &\quad  x \in\Omega,
    \end{cases}
\end{align}
where $v_2$ is a solution to Equation \eqref{equation; IBVP u_ell}.
Next, we multiply the previous equation by $\overline{w}$ and integrate over $Q$, where $w$ is a  solution to the backward damped wave operator given by
\begin{align}\label{FBVP; v}
    \begin{cases}
      \Box w - a_1(x)\partial_t w + q_1(x) w = 0,  & (t,x) \in Q,\\
      w(T, x )=0, \quad \partial_t w(T, x ) = 0,  &\quad  x \in\Omega.
    \end{cases}
\end{align}
and use the integration by parts formula to arrive at
\begin{align}
    \begin{aligned}
       \int_{\Sigma} \widetilde{v} \partial_\nu \overline{w}\ dS_x dt -   \int_{\Sigma} \overline{w}\partial_\nu \widetilde{v}\ dS_x dt=\int_{Q}[a(x)\partial_t v_2 +q(x)v_2]\overline{w}\ dx dt.
    \end{aligned}
\end{align}
Since $\partial_\nu \widetilde{v}|_{\Sigma}=0$ and $\widetilde{v}|_{\Sigma}=0$, the above integral reduces to 
\begin{align}\label{55}
    \begin{aligned}
       \int_{Q}[a(x)\partial_t v_2 +q(x)v_2]\overline{w}\ dx dt=0.
    \end{aligned}
\end{align}
Now, we will substitute the special solutions $v_2$ and $w$ in the above expression. In particular, following Lemma \ref{lemma: geom opt sol v_{1}} and \ref{lemma: geom opt sol v}, we
choose 
\begin{align}\label{particular cgo v2 and w}
    \begin{aligned}
       v_2(t,x)&:= \Phi(x+t\omega)T_{v_2}(t,x)\exp{(\mathrm{i}\tau(x\cdot\omega+t))}
+ R_{v_2}(t,x,\tau),\\
w(t,x)&:=\Phi(x+t\omega)T_w(t,x)\exp{(\mathrm{i}\tau(x\cdot\omega+t))}
+ R_w(t,x,\tau),
    \end{aligned}
\end{align}
for all $\Phi\in \mathfrak{C}_c^{\infty}(\mathbb{R}^d)$ and $\omega\in\mathbb{S}^{d-1}$. 
Here $\mathfrak{C}_c^{\infty}(\mathbb{R}^d)$ is a subspace of  $ C_c^{\infty}(\mathbb{R}^d)$ and is given by
\begin{align}\label{eq; definition of C(Rn)}
\mathfrak{C}_c^{\infty}(\mathbb{R}^d):=\{\psi\in C_c^{\infty}(\mathbb{R}^d):\supp(\psi)\cap\Omega=\emptyset\ \mbox{and}\ (\supp(\psi)\pm T\omega)\cap \Omega=\emptyset,\ \forall \ \omega\in\mathbb{S}^{d-1}\}.
\end{align}
Thus, we have
\begin{align}\label{eqn:Reconst-a}
    \begin{aligned}
        &\int_{Q}[a(x)\partial_t v_2 +q(x)v_2]\overline{w}\ dx dt\\&=\int_{Q}\bigg(a(x) 
\big(\mathrm{i}\tau\,\Phi(x+t\omega)T_{v_2}e^{\mathrm{i}\tau(x\cdot\omega+t)}+\left(\omega\cdot\nabla_x\Phi(x+t\omega)\,T_{v_2}
+ \Phi(x+t\omega)\partial_t T_{v_2}
\right)e^{\mathrm{i}\tau(x\cdot\omega+t)}
 \\& \qquad\qquad  + \partial_t R_{v_2}
\big)+ q(x) \Phi(x+t\omega)T_{v_2}e^{\mathrm{i}\tau(x\cdot\omega+t)}
+ q(x)R_{v_2}\bigg)
\left(\Phi(x+t\omega)T_{w}e^{-\mathrm{i}\tau(x\cdot\omega+t)}
+ \overline{R_w}\right) \ dx dt
    \end{aligned}
\end{align}
Next, we simplify the integrand in Equation~\eqref{eqn:Reconst-a} to get 
\begin{align}\label{eq:expanded-identity}
\begin{aligned}
&\mathrm{i}\tau \int_{Q}
a(x)\,\Phi^{2}(x+t\omega)\,T_{v_2}T_{w}\,dxdt  + \mathrm{i}\tau \int_{Q}
a(x)\,\Phi(x+t\omega)\,T_{v_2}
e^{\mathrm{i}\tau(x\cdot\omega+t)}\overline{R_w}\,dxdt \\&\quad +\int_{Q} a(x) \Phi(x+t\omega)\left(\left(\omega\cdot\nabla_x\Phi(x+t\omega)\right)\,T_{v_2}T_w+\partial_t T_{v_2}e^{\mathrm{i}\tau(x\cdot\omega+t)}\overline{R_w}\right)\,dxdt
\\&\quad+\int_{Q} a(x)\Phi(x+t\omega) T_{w}e^{-\mathrm{i}\tau(x\cdot\omega+t)}\partial_t R_{v_2}\,dxdt+\int_{Q} a(x)\Phi^2(x+t\omega)T_w\partial_t T_{v_2} \ dx dt\\&\quad+\int_{Q} a(x) \left(\omega\cdot\nabla_x\Phi(x+t\omega)\right)\,T_{v_2}e^{\mathrm{i}\tau(x\cdot\omega+t)}\overline{R_w}\,dxdt+\int_{Q} a(x) \overline{R_w} \partial_t R_{v_2}\ dx dt
\\
& \quad+ \int_{Q}
q(x)\,\Phi^{2}(x+t\omega)\,T_{v_2}T_w\,dxdt + \int_{Q}
q(x)\,\Phi(x+t\omega)\,T_{v_2}
e^{\mathrm{i}\tau(x\cdot\omega+t)}\overline{R_w}\,dxdt \\
& \quad+ \int_{Q}
q(x)\,\Phi(x+t\omega)T_w
e^{-\mathrm{i}\tau(x\cdot\omega+t)}R_{v_2}\,dxdt + \int_{Q}
q(x)\,R_{v_2}\overline{R_w}\,dxdt = 0, 
\end{aligned}
\end{align}
for all $\Phi\in \mathfrak{C}_c^{\infty}(\mathbb{R}^d)$ and $\omega\in\mathbb{S}^{d-1}$. 
After dividing Equation \eqref{eq:expanded-identity} by $\mathrm{i}\tau$, we get
\begin{align}\label{eq:final-splitting}
\int_{Q}
a(x)\,\Phi^{2}(x+t\omega)\,T_{v_2}T_{w}\,dx\,dt
+ \sum_{i=1}^{6} J_i =0,\ \mbox{for all $\Phi\in \mathfrak{C}_c^{\infty}(\mathbb{R}^d)$ and $\omega\in\mathbb{S}^{d-1}$}, 
\end{align}
where
\begin{align*}
   J_1 &=
\int_{Q}
a(x)\,\Phi(x+t\omega)\,T_{v_2}
e^{\mathrm{i}\tau(x\cdot\omega+t)}\overline{R_w}\,dx\,dt,
\\J_2&=\frac{1}{\mathrm{i}\tau}
\int_{Q} a(x) \Phi(x+t\omega)\left(\left(\omega\cdot\nabla_x\Phi(x+t\omega)\right)\,T_{v_2}T_w+\partial_t T_{v_2}e^{\mathrm{i}\tau(x\cdot\omega+t)}\overline{R_w}\right)\,dxdt,
\\
J_3&=\frac{1}{\mathrm{i}\tau}\int_{Q} a(x)\Phi(x+t\omega) T_{w}e^{-\mathrm{i}\tau(x\cdot\omega+t)}\partial_t R_{v_2}\,dxdt+\frac{1}{\mathrm{i}\tau}\int_{Q} a(x)\Phi^2(x+t\omega)T_w\partial_t T_{v_2} \ dx dt,\\
J_4 &=\frac{1}{\mathrm{i}\tau}\int_{Q} a(x) \left(\omega\cdot\nabla_x\Phi(x+t\omega)\right)\,T_{v_2}e^{\mathrm{i}\tau(x\cdot\omega+t)}\overline{R_w}\,dxdt+\frac{1}{\mathrm{i}\tau}\int_{Q} a(x) \overline{R_w} \partial_t R_{v_2}\ dx dt\\
J_5 &=
\frac{1}{\mathrm{i}\tau}\int_{Q}
q(x)\,\Phi^{2}(x+t\omega)\,T_{v_2}T_w\,dxdt+\frac{1}{\mathrm{i}\tau}\int_{Q}
q(x)\,\Phi(x+t\omega)
e^{-i\tau(x\cdot\omega+t)}T_w R_{v_2}\,dxdt,
\\
J_6 &=
\frac{1}{\mathrm{i}\tau}\int_{Q}
q(x)\,\Phi(x+t\omega)\,T_{v_2}
e^{i\tau(x\cdot\omega+t)}\overline{R_w}\,dx\,dt+\frac{1}{\mathrm{i}\tau}\int_{Q}
q(x)\,R_{v_2}\overline{R_w}\,dxdt.
\end{align*}
We now derive explicit upper bounds for each term in the expression above. The estimate of the above terms have been proved in  \cite{bhardwaj2026reconstructionpotentialdampingcoefficients,Isakov1991AnIH}; however, we preferred to provide a proof of it for the sake of completeness.  In doing so, we use H\"older’s inequality together with the decay estimates provided by \eqref{eq: r_{(1)}} and \eqref{eq: estimate of r_{(2)}}, without referring to these estimates at every step. Furthermore, we repeatedly use the assumption that $a, a_1,q \in C_c^\infty(\Omega)$.\\

\noindent\textbf{Estimate of $J_1$.}
\begin{align*}
|J_1|
&\le
\int_{Q}
\left|a(x)\Phi(x+t\omega)T_{v_2}\overline{R_w}\right|
\,dx\,dt 
\leq
\frac{C}{\tau}\|\Phi\|^{2}_{H^{3}(\R^d)}.
\end{align*}
\noindent\textbf{Estimate of $J_2$.}
\begin{align}
    \lvert J_2\rvert&\leq \frac{1}{\tau} 
\int_{Q} \left\lvert a(x) \Phi(x+t\omega)\left(\left(\omega\cdot\nabla_x\Phi(x+t\omega)\right)\,T_{v_2}T_w+\partial_t T_{v_2}e^{\mathrm{i}\tau(x\cdot\omega+t)}\overline{R_w}\right)\right\rvert\,dxdt\\&\leq \frac{1}{\tau} \int_{Q}\left|a(x)\Phi(x+t\omega)
\left(
\omega\cdot\nabla_x\Phi(x+t\omega)T_{v_2}T_w
\right)
\right|
\,dxdt+\frac{1}{\tau} 
\int_{Q}\left|a(x)\Phi(x+t\omega)\overline{R_w}\partial_t T_{v_2} \right|\ dx dt\\&\leq \frac{C}{\tau}\left( \lVert \Phi\rVert^2_{H^1(\mathbb{R}^d)}+\frac{1}{\tau}\|\Phi\|^{2}_{H^{3}(\R^d)}\right).
\end{align}
\noindent\textbf{Estimate of $J_3$.}
\begin{align}
   \lvert J_3\rvert&\leq  \frac{1}{\tau}\int_{Q} \lvert a(x)\Phi(x+t\omega) T_{w}\partial_t R_{v_2}\rvert\,dxdt+\frac{1}{\tau}\int_{Q} \lvert a(x)\Phi^2(x+t\omega)T_w\partial_t T_{v_2}\rvert \ dx dt\\&\leq \frac{C}{\tau}\left(\|\Phi\|^{2}_{H^{3}(\R^n)}+\lVert \Phi\rVert^2_{L^2(\mathbb{R}^d)}\right).
\end{align}
\noindent\textbf{Estimate of $J_4$.}
\begin{align}
    \lvert J_4\rvert &\leq \frac{1}{\tau}\int_{Q} \lvert a(x) \left(\omega\cdot\nabla_x\Phi(x+t\omega)\right)\,T_{v_2}\overline{R_w}\rvert \,dxdt+\frac{1}{\tau}\int_{Q} \lvert a(x) \overline{R_w} \partial_t R_{v_2}\rvert\ dx dt\\&\leq \frac{C}{\tau^2}\|\Phi\|^{2}_{H^{3}(\R^d)}.
\end{align}
\noindent\textbf{Estimate of $J_5$.}
\begin{align}
   \lvert J_5 \rvert &\leq  \frac{1}{\tau}\int_{Q}
\lvert q(x)\,\Phi^{2}(x+t\omega)\,T_{v_2}T_w\rvert\,dxdt+\frac{1}{\tau}\int_{Q}
\lvert q(x)\,\Phi(x+t\omega)
T_w R_{v_2}\rvert \,dxdt\\&\leq \frac{C}{\tau}\left(
\lVert \Phi\rVert^2_{L^2(\mathbb{R}^d)}+ \frac{1}{\tau}\|\Phi\|^{2}_{H^{3}(\R^d)}
\right).
\end{align}
\noindent\textbf{Estimate of $J_6$.}
\begin{align*}
|J_6| &\leq \frac{1}{ \tau}\int_{Q} \left|q(x)\Phi(x+t\omega)T_{v_2}\overline{R_w}\right|\,dxdt+ \frac{1}{ \tau}\int_{Q} \left|q(x) R_{v_2} \overline{R_w}\right|\,dxdt\\
&\leq
    \frac{C}{\tau^2}\left(1+\frac{1}{\tau}\right)\|\Phi\|^{2}_{H^{3}(\R^d)}.
\end{align*}
Using the above estimates in \eqref{eq:final-splitting}, we achieve
\begin{align}
   \left \lvert \int_{Q}
a(x)\,\Phi^{2}(x+t\omega)\,T_{v_2}T_{w}\,dx\,dt \right \rvert \leq     \frac{C}{\tau}\left(1+\frac{1}{\tau}+\frac{1}{\tau^2}\right)\|\Phi\|^{2}_{H^{3}(\R^n)},
\end{align}
for all $\Phi\in \mathfrak{C}_c^{\infty}(\mathbb{R}^d)$ and $\omega\in\mathbb{S}^{d-1}$.
Letting
$\tau \to \infty$ in the above inequality, we have 
\begin{align}\label{eq:main-identity}
\int_{Q}
a(x)\,\Phi^{2}(x+t\omega)\,T_{v_2}T_{w}\,dx\,dt
=0,\ \mbox{for all $\Phi\in \mathfrak{C}_c^{\infty}(\mathbb{R}^d)$ and $\omega\in\mathbb{S}^{d-1}$}. 
\end{align} 
We seek to reduce the integral above to a suitable ray transform, thereby allowing us to analyze the uniqueness properties of the damping coefficient $a$. To do so, we first extend $a$ to all of $\mathbb{R}^d$ by setting it equal to zero outside $\Omega$, and continue to denote this extension by $a$. Under this convention, Equation \eqref{eq:main-identity} can be reformulated as
\begin{align*}
\int_0^T\int_{\mathbb{R}^d} a(x)\Phi^2(x+t\omega)T_{v_2}T_{w}\,dx\,dt = 0, \ \mbox{for all $\Phi\in \mathfrak{C}_c^{\infty}(\mathbb{R}^d)$ and $\omega\in\mathbb{S}^{d-1}$}.
\end{align*}
Substituting 
\begin{align}\label{sol of transport eq for v2 and w}
T_{v_2}(t,x)=\exp\left(-\frac{1}{2}\int_0^ta_2(x+s\omega)\,ds\right) \mbox{ and }  T_w(t,x)=\exp\left(\frac{1}{2}\int_0^ta_1(x+s\omega)\,ds\right)\end{align}
in the above equation, we get
\begin{align*}
\int_0^T\int_{\mathbb{R}^d} a(x)\Phi^2(x+t\omega)\exp\left(-\frac{1}{2}\int_0^ta(x+s\omega)\,ds\right)\,dx\,dt = 0,
\end{align*}
for all $\Phi\in \mathfrak{C}_c^{\infty}(\mathbb{R}^d)$ and $\omega\in\mathbb{S}^{d-1}$. By performing the change of variables \(z = x + t\omega\), followed by an interchange of the order of integration, we arrive at
\begin{align}\label{eq; distribution = known}
\int_{\mathbb{R}^d}\left(\int_0^T a(z -t\omega)\exp\left(-\frac{1}{2}\int_0^t a(z+(s-t)\o)\,ds\right)\,dt\right)\Phi^2(z)\,dz = 0,
\end{align}
for all $\Phi\in \mathfrak{C}_c^{\infty}(\mathbb{R}^d)$ and $\omega\in\mathbb{S}^{d-1}$. If we denote 
\[ \mathcal{R}_{a}(z,\omega) := \int_0^T a(z -t\omega)\exp\left(-\frac{1}{2}\int_0^t a(z+(s-t)\o)\,ds\right)\,dt\]
 for $(z,\omega)\in \mathbb{R}^d\times \mathbb{S}^{d-1}$, then Equation \eqref{eq; distribution = known} reduces to   
\begin{align}\label{T operator is known}
 \int_{\mathbb{R}^d}\mathcal{R}_{a}(z,\omega)\Phi^2(z)\,dz=0,\  \mbox{for all $\Phi\in \mathfrak{C}_c^{\infty}(\mathbb{R}^d)$ and $\omega\in\mathbb{S}^{d-1}$}.
\end{align}
Observe that
\begin{align}
\begin{aligned} 
 \mathcal{R}_{a}(z,\omega) &= \int_0^T a(z -t\omega)\exp\left(-\frac{1}{2}\int_0^t a(z+(s-t)\o)\,ds\right)\,dt\\&=\int_0^T a(z -t\omega)\exp\left(-\frac{1}{2}\int_0^t a(z-s_1\o)\,ds_1\right)\,dt
 \\&=-2\int_0^T \partial_t \exp\left(-\frac{1}{2}\int_0^t a(z-s_1\o)\,ds_1\right)\ dt\\&=-2\exp\left(-\frac{1}{2}\int_0^T a(z-s_1\o)\,ds_1\right)+2
 \end{aligned}
 \end{align}
 Thus, Equation \eqref{T operator is known} reduces to 
 \begin{align}\label{integral holds for all phi}
\int_{\mathbb{R}^d}\left(\exp\left(-\frac{1}{2}\int_0^T a(z-s_1\o)\,ds_1\right)-1 \right) \Phi^2(z)\,dz=0,\  \mbox{for all $\Phi\in \mathfrak{C}_c^{\infty}(\mathbb{R}^d)$ and $\omega\in\mathbb{S}^{d-1}$}.
 \end{align}
 Further, we simplify the above expression by constructing a suitable family of cutoff functions from $\mathfrak{C}_{c}^{\infty}(\mathbb{R}^{d})$. Let $\zeta\in C_{c}^{\infty}(B_1(0);[0,1])$ such that  $\int_{\mathbb{R}^d}\zeta^2(x)dx=1$. Now  
for $R \in \left(0, \min\left\{1, \frac{T - \operatorname{diam}(\Omega)}{3}\right\}\right)$, we define open set $\Omega_{R}\subset\mathbb{R}^d$ by 
\begin{align}\label{dom:Omega_epsilon}
    \Omega_{R}
    := \left\{ x \in \mathbb{R}^d \setminus \overline{\Omega} : 
    \operatorname{dist}(x, \Omega) < R \right\}. 
\end{align}
Let $\{\zeta_k\}_{k>0}\subset \mathfrak{C}_{c}^{\infty}(\mathbb{R}^{d})$ be a sequence of cutoff function given by
\begin{align}\label{Phi_{delta}}
    \zeta_{k}(z)=k^{-\frac{d}{2}}\zeta\left(\frac{z-z_0}{k}\right),
\end{align}
where $z_0\in \O_R$. Since Equation \eqref{integral holds for all phi} is valid for every cutoff function $\Phi \in \mathfrak{C}_{c}^{\infty}(\mathbb{R}^{d})$, in particular it also holds for $\zeta_k \in \mathfrak{C}_{c}^{\infty}(\mathbb{R}^{n})$.
Thus, Equation \eqref{integral holds for all phi} boils down to
\begin{align}
    k^{-d}\int_{B_k(z_0)}\left(\exp\left(-\frac{1}{2}\int_0^T a(z-s_1\o)\,ds_1\right)-1 \right)\zeta^2\left(\frac{z-z_0}{k}\right)\,dz=0,
\end{align}
for each $0<k<R$ and $z_0 \in \O_R$. Letting $k\rightarrow\infty$, we obtain 
\begin{align}
    \exp\left(-\frac{1}{2}\int_0^T a(z_0-s_1\o)\,ds_1\right)=1, 
\end{align}
for any $z_0 \in \O_R$ and $\o \in \mathbb{S}^{d-1}$. Apply the logarithmic function in the above expression to arrive at  
\begin{align}\label{integral of a in term of T}
    \int_0^T a(z_0-s_1\o)\,ds_1=0,\ \text{ for all } z_{0}\in \Omega_{R}\ \mbox{and}\ \omega\in \mathbb{S}^{d-1}.
\end{align}
Next, we seek to derive the $X$-Ray transform from the above integral. Since $T> \diam(\O)$, therefore we have 
\[(\O_R\pm T_1\o)\cap \O=\emptyset,\ \  \mbox{ for all $T_1\geq T$ and $\o\in \mathbb{S}^{d-1}$}.\]
From the foregoing calculation along with the hypothesis $a \in C_c^\infty(\O)$, we conclude that $a(z_0-s_1\o)=0$ for $s_1\geq T$ and therefore Equation \eqref{integral of a in term of T} can be written as
\begin{align}\label{estimate a_{omega_{eps}}}
    \int_0^\infty a(z_0-s_1\o)\,ds_1=0,\ \text{ for all } z_{0}\in \Omega_{R}\ \mbox{and}\ \omega\in \mathbb{S}^{d-1}.
\end{align}

Next following the ideas used in \cite{Rakesh01011988,Isakov1991AnIH,bhardwaj2026reconstructionpotentialdampingcoefficients},  we will show  that the above integral holds for every $z_{0}\in \mathbb{R}^d$ and $\omega\in \mathbb{S}^{d-1}$. We first observe that we can replace $z_{0}$ with $z_{0}+t\omega$ for any $t\in \mathbb{R}$ and $\omega\in \mathbb{S}^{d-1}$. Indeed, this follows from the change of variable formula and the symmetry of $\mathbb{S}^{d-1}$ i.e., $\omega\in\mathbb{S}^{d-1}$ if and only if $-\omega\in \mathbb{S}^{d-1}$.

We first consider the case $z_{0}\in \mathbb{R}^{d}\setminus\overline{\Omega}$. There are two possibilities. If the line $z_{0}+t\omega$ does not intersect $\Omega_{R}$, then it does not intersect $\overline{\Omega}$. Since $a\in C_{c}^{\infty}(\Omega)$, the integrand vanishes along this line, and hence the left hand side of the integral equation is zero, in agreement with its right hand side. Suppose now that the line passing through $z_{0}$ intersect $\Omega_{R}$. Then there exists $t\in\mathbb{R}$ such that $z_{0}+t\omega\in\Omega_{R}$. Therefore, by the preceding observation, we conclude \eqref{estimate a_{omega_{eps}}} is valid for $z_{0}\in\mathbb{R}^{d}\setminus\overline{\Omega}$. It remains to consider $z_{0}\in\overline{\Omega}$. In this case, there exists $t\in\mathbb{R}$ such that $z_{0}+t\omega\in \Omega_{R}$. Once again, the earlier observation implies that \eqref{estimate a_{omega_{eps}}} holds for such $z_{0}$. Therefore, the integral equation \eqref{estimate a_{omega_{eps}}} holds for every $z_{0}\in \mathbb{R}^{d}$. Thus, we obtain that 
 \begin{align*}
      \int_{0}^{\infty} a(z_{0}-\rho\omega)\,d\rho = 0, \ \text{ for all } z_{0}\in \mathbb{R}^d\ \mbox{and}\ \omega\in \mathbb{S}^{d-1}.
 \end{align*}
Thus, using the inversion of $X$-ray transform (see Theorem 1.1, \cite{Natterer86}), we conclude that $a=0$ in $\Omega$, which gives the uniqueness result for the damping coefficient.
\subsection{Recovery of zeroth-order coefficient}
In this subsection, we establish the unique recovery of the zeroth-order potential $q$. Substitute $a=0$ in Equation \eqref{55}, we get
\begin{align}
    \begin{aligned}
       \int_{Q}q(x)v_2(t,x)\overline{w(t,x)}\ dx dt=0.
    \end{aligned}
\end{align}
In above expression, we now substitute the special solution $v_2$ and $w$ given by \eqref{particular cgo v2 and w} to get
\begin{align}\label{eq:expanded-identity in b}
\begin{aligned}
\int_{Q}
q(x)\,\Phi^{2}(x+t\omega)\,T_{v_2}T_w\,dxdt +K_1+K_2+K_3=0, \mbox{ for all $\Phi\in \mathfrak{C}_c^{\infty}(\mathbb{R}^d)$ and $\omega\in\mathbb{S}^{d-1}$,}
\end{aligned}
\end{align}
where 
\begin{align}
    K_1&:=\int_{Q}
q(x)\,\Phi(x+t\omega)\,T_{v_2}
e^{\mathrm{i}\tau(x\cdot\omega+t)}\overline{R_w}\,dxdt,\\
K_2&:=\int_{Q}
q(x)\,\Phi(x+t\omega)T_w
e^{-\mathrm{i}\tau(x\cdot\omega+t)}R_{v_2}\,dxdt,\\
K_3&:=\int_{Q}
q(x)\,R_{v_2}\overline{R_w}\,dxdt. 
\end{align}
Again, we use H\"older’s inequality together with the decay estimates provided by \eqref{eq: r_{(1)}} and \eqref{eq: estimate of r_{(2)}}, to obtain the desired upper bounds for $K_1, K_2$ and $K_3$.\\

\noindent\textbf{Estimate of $K_1$.}
\begin{align*}
|K_1|
&\le
\int_{Q}
\left|q(x)\Phi(x+t\omega)T_{v_2}\overline{R_w}\right|
\,dx\,dt 
\leq
\frac{C}{\tau}\|\Phi\|^{2}_{H^{3}(\R^d)}.
\end{align*}
\noindent\textbf{Estimate of $K_2$.}
\begin{align*}
|K_2|
&\le
\int_{Q}
\left|q(x)\Phi(x+t\omega)T_{w}R_{v_2}\right|
\,dx\,dt 
\leq
\frac{C}{\tau}\|\Phi\|^{2}_{H^{3}(\R^d)}.
\end{align*}
\noindent\textbf{Estimate of $K_3$.}
\begin{align*}
|K_3|
&\le
\int_{Q}
\left|q(x)R_{v_2}\overline{R_w}\right|
\,dx\,dt 
\leq
\frac{C}{\tau^2}\|\Phi\|^{2}_{H^{3}(\R^d)}.
\end{align*}
Using the above estimates in \eqref{eq:expanded-identity in b}, we achieve
\begin{align}
   \left \lvert \int_{Q}
q(x)\,\Phi^{2}(x+t\omega)\,T_{v_2}T_{w}\,dx\,dt \right \rvert \leq     \frac{C}{\tau}\left(1+\frac{1}{\tau}\right)\|\Phi\|^{2}_{H^{3}(\R^d)},
\end{align}
for all $\Phi\in \mathfrak{C}_c^{\infty}(\mathbb{R}^d)$ and $\omega\in\mathbb{S}^{d-1}$.
Letting
$\tau \to \infty$ in the above inequality, we have 
\begin{align}
\int_{Q}
q(x)\,\Phi^{2}(x+t\omega)\,T_{v_2}T_{w}\,dx\,dt
=0,\ \mbox{for all $\Phi\in \mathfrak{C}_c^{\infty}(\mathbb{R}^d)$ and $\omega\in\mathbb{S}^{d-1}$}. 
\end{align} 
The substitution of $T_{v_2}$ and $T_{w}$ given by \eqref{sol of transport eq for v2 and w} in the foregoing expression leads to
\begin{align*}
\int_0^T\int_{\mathbb{R}^d} q(x)\Phi^2(x+t\omega)\exp\left(-\frac{1}{2}\int_0^ta(x+s\omega)\,ds\right)\,dx\,dt = 0,
\end{align*}
for all $\Phi\in \mathfrak{C}_c^{\infty}(\mathbb{R}^d)$ and $\omega\in\mathbb{S}^{d-1}$.
Since the damping coefficient $a=0$ and $q\in C_c^{\infty}(\Omega)$, we have 
\begin{align}
    \int_0^T \int_{\mathbb{R}^d} q(x) \Phi^2(x+t\omega)\,dx\,dt = 0, \ \mbox{for all $\Phi\in \mathfrak{C}_c^{\infty}(\mathbb{R}^d)$ and $\omega\in\mathbb{S}^{d-1}$}.
\end{align} 
The application of the change of variables formula with \(z := x + t\omega\), in conjunction with Fubini’s theorem, to the expression above yields
\begin{align}
\int_{\mathbb{R}^d}\left(\int_0^T q(z -t\omega)\,dt\right)\Phi^2(z)\,dz =0,\  \mbox{for all $\Phi\in \mathfrak{C}_c^{\infty}(\mathbb{R}^d)$ and $\omega\in\mathbb{S}^{d-1}$}.
\end{align} 
As discussed during the recovery of the damping coefficient, we now use the family of test functions $\zeta_k$ given by \eqref{Phi_{delta}}, to obtain
\begin{align}
    \int_0^T q(z_0 -t\omega)\,dt=0,\ \text{ for all } z_{0}\in \Omega_{R}\ \mbox{and}\ \omega\in \mathbb{S}^{d-1}.
\end{align}
By employing an analogous calculation to that previously used to recover the damping coefficient \(a\) from Equation \eqref{integral of a in term of T}, we obtain \[
    q(x)=0, \ \  \text{for all}\  x \in \Omega.\]
This completes the unique recovery of the zeroth-order potential $q$ in $\Omega$.\qed
\subsection{Higher-order linearization}
This subsection is devoted to deriving a higher-order linearization of Equation \eqref{equation; IBVP}. This linearization plays a key role while deriving the unique recovery of the nonlinear coefficient $r$. Given  $f_1,f_2,\dots,f_{\ell} \in H^{m+1}(\Sigma)$, we choose $\epsilon = (\epsilon_1,\dots,\epsilon_{\ell})$ such that 
\[\epsilon f:= \epsilon_1 f_1 +\dots + \epsilon_{\ell} f_{\ell} \in \mathcal{F}^\varrho_{m+1},\] 
with  $\epsilon_j\geq 0$ for each $1\leq j\leq \ell$.
Let $u(t,x):=u_{\epsilon f}(t,x)\in \mathscr{E}_{m+1}$ 
be the solution to the following equation
\begin{align}\label{eq;Linearizatiin non linear part}
\begin{cases}
\Box u(t,x) + a(x)\partial_t u(t,x) + q(x)u(t,x) =- r(x)u^{\ell}(t,x), 
&(t,x)\in  Q,\\
u(t,x)=\epsilon f(t,x), &(t,x)\in \Sigma,\\
u(0,x)=0,\quad \partial_t u(0,x)=0 ,& \quad x\in  \Omega. 
\end{cases}
\end{align}
Now after applying $\dfrac{\partial^\ell}{\partial \epsilon_{1}\cdots\partial \epsilon_{\ell}}$  to above IBVP and evaluating it at $\epsilon=0$, we obtain
 \begin{align}\label{eq;higher order Linearization eqn}
\begin{cases}
\Box U(t,x) + a(x)\partial_t U(t,x) + q(x)U(t,x) = -r(x)\ell!\prod\limits_{i=1}^{\ell}v_i(t,x),
& (t,x)\in  Q,\\
U(t,x) = 0, & (t,x)\in  \Sigma,\\
U(0,x)=0,\quad \partial_t U(0,x)=0, 
&\quad x\in  \Omega,
\end{cases}
\end{align}
where 
\begin{equation}\label{eq:def-U}
U(t,x):=\dfrac{\partial^\ell}{\partial \epsilon_{1}\cdots\partial \epsilon_{\ell}}u(t,x)\big|_{\epsilon=0},
\end{equation}
and $v_i's$ are given by Equation \eqref{eq:def-vk}.
The $\ell^{th}$-order linearization of the DtN map \eqref{eq:DN map} is given by 
\begin{align}
 \left. \dfrac{\partial^\ell}{\partial \epsilon_{1}\cdots\partial \epsilon_{\ell}}\partial_\nu u(t,x,\epsilon_1,\dots,\epsilon_{\ell})\right|_{\Sigma,\, \epsilon=0}=\partial_\nu U(t,x)|_{\Sigma}.
\end{align}
\subsection{\texorpdfstring{Recovery of nonlinearity coefficient}{Recovery of q}}\label{Recovery of r} 
This subsection is devoted to establishing the unique recovery of the nonlinearity coefficient $r$. To do so, 
for $j=\{1,2\}$, let $U_{j}$  be the solution to the initial-boundary value problem
\begin{align}\label{equation; IBVP Uj}
    \begin{cases}
      \Box U_{j}(t,x) + a(x)\partial_t U_{j}(t,x) + q(x) U_{j}(t,x) = -r_j(x)\ell!\prod\limits_{i=1}^{\ell}v_i(t,x),  & (t,x) \in Q,\\
      U_{j}(t,x)  = 0, & (t,x) \in\Sigma,\\
      U_{j}(0, x )=0,\quad \partial_t U_{j}(0, x ) = 0,  &\quad x \in\Omega.
    \end{cases}
\end{align}
Since 
\(
\Lambda_{a_1,q_1,r_1}(\epsilon f)=\Lambda_{a_2,q_2,r_2}(\epsilon f),  \text{ for all } \epsilon f \in \mathcal{F}^\varrho_{m+1},
\) 
we have
\(\partial_\nu U_1|_{\Gamma_T}=\partial_\nu U_2|_{\Gamma_T}.\)
Now if we denote 
\begin{align}
    \begin{aligned}
       U(t,x):=(U_{1}-U_{2})(t,x)\ \mbox{ and } 
      \ r(x)=r_2-r_1(x),
    \end{aligned}
\end{align}
we obtain
\begin{align}\label{U having rhs term}
    \begin{cases}
      \Box U(t,x) + a(x)\partial_t U(t,x) + q(x) U(t,x) = r(x)\ell!\prod\limits_{i=1}^{\ell}v_i(t,x),  & (t,x) \in Q,\\
      U(t,x)  = 0, & (t,x) \in\Sigma,\\
      U(0, x )=0, \quad \partial_t U(0, x ) = 0,  &\quad  x \in\Omega,
    \end{cases}
\end{align}
 and \(\partial_\nu U|_{\Gamma_T}=0.\)
Since $r(x)=0$ in $\mathcal{O}$, we have $r(x)\ell!\prod\limits_{i=1}^{\ell}v_i=0$ in $(0,T)\times \mathcal{O}$. Following Corollary \ref{UCP}, we obtain 
\[U = 0 \quad \text{in } (0,T) \times (\Omega_{3}\setminus \Omega_{2}).\]
Next, consider $\Xi \in C^{\infty}(\overline\Omega,[0,1])$  given by Equation \eqref{eqn: construction theta} and define $X:=\Xi U$ satisfying
\begin{align}\label{widetilde U having rhs term}
    \begin{cases}
      \Box X(t,x) + a(x)\partial_t X(t,x) + q(x) X(t,x) = I_2(t,x),  & (t,x) \in Q,\\
      X(t,x)  = 0, & (t,x) \in\Sigma,\\
      X(0, x )=0, \quad \partial_t X(0, x ) = 0,  &\quad  x \in\Omega,
    \end{cases}
\end{align}
where \[
I_2=\Xi r(x)\ell!\prod\limits_{i=1}^{\ell}v_i-U\Delta_x \Xi-2\nabla_xU\cdot\nabla_x \Xi.
\]
From the definition of $\Xi$ given by \eqref{eqn: construction theta}, we have $\partial_\nu X|_{\Sigma}=0$.
Next, an analogue calculation discussed to achieve \eqref{achived} after \eqref{before achieved} leads to
\begin{align}
    \begin{cases}
      \Box X(t,x) + a(x)\partial_t X(t,x) + q(x) X(t,x) = r(x)\ell!\prod\limits_{i=1}^{\ell}v_i(t,x),  & (t,x) \in Q,\\
      X(t,x)  = 0, & (t,x) \in\Sigma,\\
      X(0, x )=0, \quad \partial_t X(0, x ) = 0,  &\quad  x \in\Omega,
    \end{cases}
\end{align}
where  $v_i's$ are given by Equation \eqref{eq:def-vk}.
Now, we multiply the above equation by $\overline{w}$ and integrate over $Q$, where $w$ is a  solution to the backward damped wave operator given by
\begin{align}\label{FBVP; w}
    \begin{cases}
      \Box w - a(x)\partial_t w + q(x) w = 0,  & (t,x) \in Q,\\
      w(T, x )=0, \quad \partial_t w(T, x ) = 0,  &\quad  x \in\Omega,
    \end{cases}
\end{align}
and apply the integration by parts to arrive at
\begin{align}\label{integral identity in q}
    \begin{aligned}
\int_{Q}r(x)\ell!\overline{w}\prod\limits_{i=1}^{\ell}v_i\ dx dt=0.
    \end{aligned}
\end{align}
Now, invoking the special asymptotic solutions $v_i's$ and $w$ from \cite{bhardwaj2026reconstructionpotentialdampingcoefficients}, given by
\begin{equation}
 v_{i}(t,x)= e^{\mathrm{i}\tau\,(t+x \cdot\omega)}\left(\Phi(x+ t\omega)
\exp\left(-\frac{1}{2}\int_{0}^{t}a(x+\tau\omega)\,d\tau\right)+\sum\limits_{j=1}^{N}\frac{m_{j}(t,x)}{\tau^j}\right)+R_{v_i}(t,x,\tau), 
    \end{equation}
    and
    \begin{equation}
 w(t,x)= e^{{\ell}\mathrm{i}\tau\,(t+x \cdot\omega)}\left(\Phi(x+ t\omega)
\exp\left(\frac{1}{2}\int_{0}^{t}a(x+\tau\omega)\,d\tau\right)+\sum\limits_{j=1}^{N}\frac{\beta_{j}(t,x)}{\tau^j}\right)+R_w(t,x,\tau)
    \end{equation}
   respectively,  where $\omega\in \mathbb{S}^{d-1}$, $\Phi\in \mathfrak{C}^{\infty}_{c}(\mathbb{R}^d)$, and the correction terms  $R_{v_i}(t,x,\tau)$ for $1\leq i\leq \ell$ and $R_w(t,x,\tau)$, satisfies the following estimates: \[\lVert R_{v_i}\rVert_{L^{\infty}(Q)}\leq \frac{C}{\tau} \quad \mbox{ and}\quad \lVert R_w\rVert_{L^{\infty}(Q)}\leq \frac{C}{\tau},\] 
   where $C>0$ is independent of $\tau$. Also, we have 
    \[R_w(T,x,\tau)=\partial_{t}R_{w}(T,x,\tau)=0,\  \mbox{and}\  R_{v_i}(0,x,\tau)=\partial_t R_{v_i}(0,x,\tau)=0,\ (1\leq i\leq \ell),\   \mbox{for}\ x\in \Omega.\]
    As indicated previously, we once more substitute the particular solution into the integral identity above and attempt to reformulate it into an desired ray transform, whose inversion guarantees the unique recovery of the nonlinear coefficient \(r\). We shall begin by the following observation:
\begin{align}
    \prod_{i=1}^\ell v_i&=    \prod_{i=1}^\ell \left(e^{\mathrm{i}\tau\,(t+x \cdot\omega)}\left(\Phi(x+ t\omega)
\exp\left(-\frac{1}{2}\int_{0}^{t}a(x+\tau\omega)\,d\tau\right)+\sum\limits_{j=1}^{N}\frac{m_{j}(t,x)}{\tau^j}\right)+R_{v_i}(t,x,\tau)\right)\\&=\sum_{I\subset\{1,\dots,\ell\}}
\mathcal{A}_+^{\ell-|I|}\prod_{i\in I} R_{v_i}(t,x,\tau)
\end{align}
where 
$\mathcal{A}_+(t,x)=e^{\mathrm{i}\tau\,(t+x \cdot\omega)}\left(\Phi(x+ t\omega)
\exp\left(-\frac{1}{2}\int_{0}^{t}a(x+\tau\omega)\,d\tau\right)+\sum\limits_{j=1}^{N}\frac{m_{j}(t,x)}{\tau^j}\right)$
 and thus
 \begin{align}\label{where we use binomial}
\overline{w}\prod\limits_{i=1}^{\ell}v_i&=\left(\mathcal{A}_{\ell,-}+\overline{R_w}\right)\times\left(\sum_{I\subset\{1,\dots,\ell\}}
\mathcal{A}^{\ell-|I|}\prod_{i\in I} R_{v_i}\right)
 \end{align}
where \begin{align}\mathcal{A}_{\ell,-}(t,x)&=e^{-{\ell}\mathrm{i}\tau\,(t+x \cdot\omega)}\left(\Phi(x+ t\omega)
\exp\left(\frac{1}{2}\int_{0}^{t}a(x+\tau\omega)\,d\tau\right)+\sum\limits_{j=1}^{N}\frac{\beta_{j}(t,x)}{\tau^j}\right).\end{align}
On further simplification along with using binomial theorem, Equation \eqref{where we use binomial} reduces to
\begin{align}\label{factoe of integral identity}
\begin{aligned}
\overline{w}\!\left(\prod_{i=1}^{\ell}v_i\right)
&= \mathcal{F} \mathcal{P}^{\ell} + \mathcal{G} \mathcal{P}^{\ell}
+ (\mathcal{F}+\mathcal{G})\sum_{j=1}^{\ell} \mathcal{P}^{\ell-j}\mathcal{Q}^{j}  + \mathcal{A}_{\ell,-}\left(
\sum_{\emptyset\neq I\subset\{1,\dots,\ell\}}
\mathcal{A}_+^{\ell-|I|}\prod_{i\in I} R_{v_i}\right)\\ &\qquad  + \overline{R_w} \sum_{I\subset\{1,\dots,\ell\}}
\mathcal{A}_+^{\ell-|I|}\prod_{i\in I} R_{v_i}.
\end{aligned} 
\end{align}
where
\begin{align}\label{eq; a few notations after IE}
\begin{aligned}
   &  \mathcal{F}:= \Phi(x+ t\omega)
\exp\left(\frac{1}{2}\int_{0}^{t}a(x+\tau\omega)\,d\tau\right), \quad \mathcal{G} := \sum\limits_{i=1}^{N}\frac{\beta_{i}(t,x)}{\tau^i},\\
& \mathcal{P}:= \Phi(x+ t\omega)
\exp\left(-\frac{1}{2}\int_{0}^{t}a(x+\tau\omega)\,d\tau\right),\quad
\mathcal{Q} := \sum\limits_{i=1}^{N}\frac{m_{i}(t,x)}{\tau^i}.
\end{aligned}
\end{align}
Now we insert \eqref{factoe of integral identity} to the integral identity given by Equation \eqref{integral identity in q} and obtain
\begin{align}\label{eq:final-splitting in q}
\begin{aligned}
\int_{Q}
r(x)\,\Phi^{\ell+1}(x+t\omega)\,\left(T_{v}(t,x)\right)^{\ell -1}
\,dx\,dt
+ \sum_{j=1}^{4}L_j=\;& 0, \ \mbox{for all $\Phi\in \mathfrak{C}_c^{\infty}(\mathbb{R}^d)$},
\end{aligned}
\end{align}
 where 
\begin{align*}
    \begin{aligned}
{L}_{1}:=\int_{Q}
r(x)\mathcal{G}(t,x)
\Phi^{\ell}(x+t\omega)
(T_v(t,x))^\ell\,dx\,dt,
\end{aligned} 
\end{align*}
\begin{align*}
\begin{aligned}
L_{2}&:=\int_{Q}
r(x)\left(
\Phi(x+t\omega)
T_w(t,x)
+\mathcal{G}(t,x)
\right)
\left(\sum_{i=1}^{\ell}
\Phi^{\ell-i}(x+t\omega)
T_v^{\ell-i}(t,x)
\mathcal{Q}^{i}(t,x)
\right)\,dx\,dt, 
\end{aligned} 
\end{align*} 
\begin{align*}
\begin{aligned}
L_{3}&:=\int_{Q}
r(x)\, \mathcal{A}_{\ell,-}(t,x)
\left(
\sum_{\emptyset\neq I\subset\{1,\dots,\ell\}}\left(
\mathcal{A}_+(t,x)
\right)^{\ell-|I|}
\prod_{i\in I}R_{v_i}\right)
\,dx\,dt ,
\end{aligned} 
\end{align*} 
\begin{align*}
\begin{aligned}
L_{4}&:= \int_{Q}
r(x)\,R_w
\Bigg(
\sum_{I\subset\{1,\dots,\ell\}}
\left(
\mathcal{A}_+(t,x)
\right)^{\ell-|I|}
\prod_{i\in I}R_{v_i}
\Bigg)\,dx\,dt.
\end{aligned}
\end{align*} 
 The bounds for the above term are as follows:
\begin{align}
|L_1|
\le C\,|Q|\,\tau^{-1}
\le C{\tau}^{-1}, \quad  
|L_2|
\le C{\tau}^{-1},  
\end{align}
\begin{align}
    |L_3|
\le C\,\|R_{v_i}\|_{L^\infty(Q)}
\le C{\tau}^{-1}, \quad 
    |L_4|
\le C\,\|R_w\|_{L^\infty(Q)}
\le  C{\tau}^{-1}.
\end{align}
Letting $\tau \to \infty$ in  \eqref{eq:final-splitting in q} along with above estimates, we achieve
\begin{align*}
\int_{Q}r(x)\Phi^{\ell+1}(x+t\omega)\left(T_v(t,x)\right)^{\ell-1}\,dx\,dt\,= 0,\ \mbox{for all $\Phi\in \mathfrak{C}_c^{\infty}(\mathbb{R}^d)$ and $\o\in \mathbb{S}^{d-1}$.} 
\end{align*}
As discussed previously, we extend 
$r \in C_c^{\infty}(\Omega)$ by zero to $\mathbb{R}^d \setminus \Omega$, and, continue to denote this extension by $r$ in order to obtain 
\begin{align*}
\int_0^T\int_{\mathbb{R}^d} r(x)\Phi^{\ell+1}(x+t\omega)\left(T_v(t,x)\right)^{\ell-1}\,dx\,dt = 0,\ \mbox{for all $\Phi\in \mathfrak{C}_c^{\infty}(\mathbb{R}^d)$}.
\end{align*}
By performing the change of variables \(x + t\omega = z\), followed by an interchange in the order of integration, the above expression can be simplified to
\begin{align}
\int_{\mathbb{R}^d}\left(\int_0^T r(z -t\omega)\left(T_v(t,z -t\omega)\right)^{\ell-1}\,dt\right)\Phi^{\ell+1}(z)\,dz = 0,\ \mbox{for all $\Phi\in \mathfrak{C}_c^{\infty}(\mathbb{R}^d)$ and $\omega\in\mathbb{S}^{d-1}$}.
\end{align}
Inserting $T_v$ back in the above expression to get
\begin{align}
&\int_{\mathbb{R}^d}\left(\int_0^T r(z -t\omega)\exp\left(-\frac{(\ell-1)}{2}\int_0^ta(z-t\o+\tau \o)\,d\tau\right)\,dt\right)\Phi^{\ell+1}(z)\,dz \\&= \int_{\mathbb{R}^d}\left(\int_0^T r(z -t\omega)\exp\left(-\frac{(\ell-1)}{2}\int_0^ta(z-\sigma\o)\,d\sigma\right)\,dt\right)\Phi^{\ell+1}(z)\,dz =0,
\end{align}
for all $\Phi\in \mathfrak{C}_c^{\infty}(\mathbb{R}^d)$ and $\omega\in\mathbb{S}^{d-1}$.
Using  $\zeta\in C_{c}^{\infty}(B_1(0);[0,1])$ satisfying $\int_{\mathbb{R}^d}\zeta^{\ell +1}(x)dx=1$ and $\O_R$ given by Equation \eqref{dom:Omega_epsilon}, we define 
\begin{align}\label{Phi_{delta 1}}
    \zeta_{\delta}(z)=\delta^{-\frac{d}{\ell+1}}\zeta\left(\frac{z-z_0}{\delta}\right), \ \delta>0 \text{ and } z\in \mathbb{R}^d,
\end{align}
where $z_0\in \O_R$. 
Through this construction, we observe that the family $\{\zeta_{\delta}\}_{\delta>0}\subset \mathfrak{C}_c^{\infty}(\mathbb{R}^d)$, whenever $0<\delta<R$.
 Thus, we get   
\begin{align}\label{eq; distribution = known with function q}
\int_0^T r(z_0 -t\omega)\exp\left(-\frac{(\ell-1)}{2}\int_0^ta(z_0-\sigma\o)\,d\sigma\right)\,dt=0,  \ \mbox{for } \ z_0\in\Omega_R. 
\end{align}
Repeating a similar argument used for reconstructing the damping coefficient $a$, we conclude that
\begin{align}\label{attenuated ray transform}
  \int_0^{\infty} r(z -t\omega)\exp\left(-\frac{(\ell-1)}{2}\int_0^t a(z-\sigma\omega)\,d\sigma\right)\,dt \, =0, \  \text{ for } z\in \mathbb{R}^d. 
\end{align}
Following Section~4 of \cite{bhardwaj2026reconstructionpotentialdampingcoefficients}, we may invert the attenuated divergent-ray transform defined in Equation~\eqref{attenuated ray transform}, thereby deriving the following associated transport equation:
\begin{align}\label{eq:transport-eq}
\omega \cdot \nabla_{z}\, \Psi(z) + \frac{(\ell-1)}{2} \,a(z)\, \Psi(z) = r(z),\, \mbox{for all $z\in \mathbb{R}^d$}, 
\end{align}
 where \[\Psi(z):=  \int_0^{\infty} r(z -t\omega)\exp\left(-\frac{(\ell-1)}{2}\int_0^t a(z-\sigma\omega)\,d\sigma\right)\,dt.\]
Within the framework of our setup, we have $\Psi(z)=0$, which in turn implies that $r \equiv 0$ in $\O$. This concludes the proof of Theorem \ref{thm:main_result_partial_data}.

    \section*{Acknowledgments}
 \noindent Mandeep Kumar acknowledges the Prime Minister's Research Fellowship (PMRF), Government of India, for financial support. Parveen Kumar expresses his gratitude to the Council of Scientific and Industrial Research (CSIR), India, for the financial support provided through the research fellowship (File No. 09/1005(19269)/2024-EMR-I).
 Manmohan Vashisth acknowledges the support of the Anusandhan National Research Foundation (ANRF), Government of India, through the ARG-MATRICS Grant (File No. ANRF/ARGM/2025/002368/MTR). The authors also acknowledge partial financial support from the FIST Programme of the Department of Science and Technology (DST), Government of India (Reference No. SR/FST/MS-I/2018/22(C)).

 \bibliography{references}
 \bibliographystyle{alpha}
\end{document}